\newcommand{\R}{\mathbb{R}}
\newtheorem{theorem}{Theorem}[section]
\newtheorem{lemma}[theorem]{Lemma}
\newtheorem{proposition}{Proposition}[section]
\newtheorem{definition}[theorem]{Definition}
\newtheorem{remark}[theorem]{Remark}
\begin{document}
\title{\LARGE\bf{ Bifurcation into spectral gaps\\ for strongly indefinite Choquard equations} }
\date{}
\author{Huxiao Luo$^{1,2}$,  Bernhard Ruf$^{2}$, Cristina Tarsi$^{2,}$$\thanks{{\small E-mail: luohuxiao@zjnu.edu.cn (H. Luo), bernhard.ruf@unimi.it (B. Ruf), cristina.tarsi@unimi.it (C. Tarsi)}}$\\
{\small 1 Department of Mathematics, Zhejiang Normal University, Jinhua, Zhejiang, 321004, P. R. China } \\
{\small 2 Dipartimento di Matematica, Universit\`{a} degli Studi di Milano, Via C. Saldini, 50, 20133 Milan, Italy}
}
 \maketitle
\begin{center}
\begin{minipage}{13cm}
\par
\small  {\bf Abstract:} 
We consider the semilinear elliptic equations
\begin{equation*}
\left\{
\begin{array}{ll}
\aligned
&-\Delta u+V(x)u=\left(I_\alpha\ast |u|^p\right)|u|^{p-2}u+\lambda u\quad \text{for}~~x\in\R^N,  \\
&u(x) \to 0 ~~\text{as}~~ |x| \to\infty,
\endaligned
\end{array}
\right.
\end{equation*}
where $I_\alpha$ is a Riesz potential, $p\in(\frac{N+\alpha}N,\frac{N+\alpha}{N-2})$, $N\geq3$, and $V $ is continuous periodic. We assume that $0$ lies in the spectral gap $(a,b)$ of $-\Delta + V$.
We prove the existence of infinitely many geometrically distinct solutions in $H^1(\R^N)$ for each $\lambda\in(a, b)$, which bifurcate from $b$ if $\frac{N+\alpha}N< p < 1 +\frac{2+\alpha}{N}$. Moreover, $b$ is the unique gap-bifurcation  point (from zero) in $[a,b]$. When $\lambda=a$, we find infinitely many geometrically distinct solutions in $H^2_{loc}(\R^N)$. Final remarks are given  about the eventual occurrence of a bifurcation from infinity in $\lambda=a$.

 \vskip2mm
 \par
 {\bf Keywords:} Choquard equation; Schr\"odinger-Newton equation; Bifurcation into spectral gaps.

 \vskip2mm
 \par
 {\bf MSC(2010): } 35Q40, 35Q55, 47J35.

\end{minipage}
\end{center}

 {\section{Introduction}}
 \setcounter{equation}{0}

The purpose of this paper is to study bifurcation into spectral gaps for a class of nonlinear and nonlocal Schr\"odinger equations with
periodic potential. More precisley, let us consider  the following equation
\begin{equation}\label{Choqeq}
\left\{
\begin{array}{ll}
\aligned
&-\Delta u+V(x)u=\left(I_\alpha\ast |u|^p\right)|u|^{p-2}u+\lambda u\quad \text{for}~~x\in\R^N, \\
&\qquad u(x) \to 0 ~~\text{as}~~ |x| \to\infty,
\endaligned
\end{array}
\right.
\end{equation}
where $N\geq3$, $p\in[2,\frac{N+\alpha}{N-2})$,  $I_\alpha$ ($\alpha\in(0, N)$) is
the Riesz potential defined for every $x\in \R^N\setminus \{0\}$ by 
\begin{equation*}
I_\alpha(x) =A_\alpha|x|^{\alpha-N}, \quad \hbox{where }A_\alpha=\frac{\Gamma(\frac{N-\alpha}{2})}{2^\alpha\pi^{\frac{N}{2}}\Gamma(\frac{\alpha}{2})} \ \hbox{ and } \ \Gamma \ \hbox{is the Gamma function}
\end{equation*}
and $V(x)$ is the external potential, assumed to be, in our case, continuous  periodic. We recall that the choice of the constant $A_\alpha$  ensures the semigroup property 
\begin{equation}\label{sp}
I_\alpha \ast I_\beta = I_{\alpha+\beta}, \quad \forall \alpha, \beta > 0~\text{such~that}~\alpha + \beta < N.
\end{equation}
In the relevant physical case $N=3,  \alpha=N-2, p=2$  equation \eqref{Choqeq} takes its origin from  the so called Choquard-Pekar equation
\begin{equation*}
-\Delta u+u=\left(I_2\ast |u|^2\right)u \quad \text{for}~~x\in\R^3
\end{equation*}
 which appears in various  contexts, modeling the quantum polaron at rest \cite{fro, P1} and then used by Choquard in 1975, as pointed out by Lieb \cite{Lieb}, to study steady states of the one-component plasma approximation in the Hartree–Fock theory.  The Choquard-Pekar equation is also known as the Schr\"odinger-Newton equation in models coupling the Schr\"odinger equation of quantum physics together with nonrelativistic Newtonian gravity.
Lieb in \cite{Lieb} proved the existence and uniqueness of positive solutions to the Choquard equation by using rearrangement techniques. Multiplicity results  were then obtained by Lions \cite{Lions} by means of a variational
approach. A broad literature has been recently developed and we refer to \cite{CVZ,CZ,Du,W} for an up to date, though non exhaustive bibliography. We also refer to \cite{MS2} and references therein for an extensive survey on the topic.

Whereas  Choquard type problems have been widely investigated in the nonperiodic case, the case of  periodic (nonconstant) potentials $V(x)$ is much less studied: in this case the compactness issue  is  more difficult to handle due to the invariance of  the equation under the action of the noncompact group induced by translation in the coordinate directions. 
It is well known (see, e.g., \cite{RS}) that the spectrum of the self-adjoint operator $-\Delta +V$ in $L^2(\R^N)$ is purely continuous and may contain
gaps, i.e. open intervals free of spectrum. 
\vspace*{0.2cm}

The corresponding \emph{local} problem has been widely investigated, both for its physical applications and its mathematical  interest. In the case $\lambda =0$, the location of $0$ in the spectrum of $-\Delta+V$ determines the geometry of the associated energy functional. There are many results available when $\inf \sigma(-\Delta +V)>0$ (the {\it{positive definite}} case) or when $0$ lies in a gap of the spectrum  (the {\it{strongly indefinite}} case),  see, for example, \cite{CZR, Pankov1, Al, BJS, TW} or the monograph \cite{AM} and references therein. An interesting situation occurs when $0$ coincides with one of the borderline points of the spectral gap. As we will see later, this case is much more difficult to be approached, and several questions seem not yet solved. We refer to  \cite{BD} and  further generalizations proved in \cite{WZ, YCD, M, Sc, ACM}, which are, to the best of our knowledge, the only papers dealing with this case.  \\
The parameter dependent situation $\lambda \neq 0$ can be discussed replacing the potential $V(x)$ by $V(x)-\lambda$. In this context, an interesting physical and mathematical issue is establishing the existence of branches of solutions $u_\lambda$ converging towards the trivial solution as $\lambda$ approaches some point $\lambda_0$ of the spectrum, a so called \emph{bifurcation point}. An intriguing situation is the so called \emph{gap-bifurcation}, that is, bifurcation occurring at boundary points of the spectral gaps, when $V(x)$ is a periodic potential: the existence of nontrivial solutions  reveals the presence of bound states whose ``energy'' $\lambda\in\R$ lies in gaps of the spectrum of the
Schr\"{o}dinger operator $-\Delta+V$. These bound states are created by the nonlinear perturbation. A first systematic approach was obtained in \cite{KS, HKS}, see also \cite{S1, S2} and references therein. \\
In particular, in \cite{HKS}, Heinz, K\"{u}pper and Stuart applied some of their previous abstract results on bifurcation theory (see references in the paper) to study the following \emph{model} problem
\begin{equation*}\label{S1}
-\Delta u+V(x)u=r(x)|u|^{\sigma}u+\lambda u,\quad \text{in}~~\R^N,
\end{equation*}
where $r\in L^\infty(\R^N)$ and nonnegative a.e. on $\R^N$, with periodic continuous (nonconstant) potential $V(x)$. They  proved that, if $0<\sigma<\frac{4}{N-2}$ ($\sigma>0$ for $N=2$) and $[a,b]$ is a spectral gap for the Schr\"{o}dinger operator $-\Delta +V$, then there is a nontrivial solution for any $\lambda \in (a,b)$. Furthermore, if $r(x)$ is constant and $0<\sigma<4/N$, they proved that $b$ is a gap-bifurcation (from zero) point. In  \cite{KS1, S3} it is also shown that no bifurcation from 0 can occur in $a$, and  that the condition $0<\sigma<4/N$ is necessary for the displayed equation.

\vspace*{0.2cm}

What is known in the \emph{nonlocal} case, that is, for the Choquard equation \eqref{Choqeq}? The literature is much less developed. As in the local framework, in the particular  case $\lambda =0$ the geometry of the functional depends on the location of $0$ in the spectrum of $-\Delta+V$, and, consequently, the suitable approach.\\
When $V>0$ we are in the positive definite case, where  the spectrum lies in $(0,+\infty)$: Ackermann \cite{A} obtained a nontrivial solution via Mountain Pass Theorem as well as infinitely many geometrically distinct solutions, in the case of odd nonlinearities. \\
When the potential $V$ changes sign we are in the strongly indefinite case, and the spectrum consists of a union of closed intervals. If $\lambda =0$ is in a gap of the (essential) spectrum, the existence of  at least one nontrivial solution has been proved by Buffoni, Jeanjean and Stuart in \cite{BJS}, for the  physical case $N=3,  \alpha=N-2, p=2$, and by Ackermann \cite{A}, always in dimension $N=3$ but for a more general convolution kernel $W(x)$ and nonlinearity $f(u)$. The higher dimensional case has been recently approached by Qin, R\u{a}dulescu and Tang \cite{QRT}, who showed the existence of ground state solutions  without assuming the Ambrosetti-Rabinowitz type condition for $f(u)$.\\
The parameter dependent case and the related bifurcation analysis, instead, seem not to have been considered in literature. The first goal of the present paper is to address this issue. More precisely, if $V$ is a  potential satisfying the following conditions:\vspace*{0.3cm}
\begin{itemize}
	\item[$(V_1)$] $V \in C(\R^N)$ is $1-$periodic in $x_1,...x_N$;
	\item[$(V_2)$] $0$ lies in a gap of the spectrum of the Schr\"{o}dinger operator $-\Delta + V$, that is
	$$\sup[\sigma (-\Delta + V) \cap (-\infty, 0)]:=a < 0 < b:=\inf[\sigma (-\Delta + V) \cap (0,\infty)],$$
\end{itemize}
 we will prove
\begin{itemize}
	\item[$(1)$] the existence of  infinitely many {\it{geometrically distinct}} solutions $u_\lambda\in H^1(\R^N)$ for any $\lambda\in(a,b)$;
	\item[$(2)$] the convergence towards $0$ of these solutions $u_\lambda$, as $\lambda \to b^-$, for some values of $p$.
\end{itemize}Before giving the precise statements, we need to introduce the following definition.

\begin{definition}
	Suppose that $u_1, u_2$ solve \eqref{Choqeq}; if $\mathcal{O}(u_1)\neq\mathcal{O}(u_2)$,
	we say that $u_1$, $u_2 $ are geometrically distinct, where, $\mathcal{O}(u)$ denotes the orbit of $u_0$ with respect to the action of $\mathbb{Z}^N$:
	$$\mathcal{O}(u_0) := \{\tau_k u_0 : k \in \mathbb{Z}^N \},\quad (\tau_k u)(x) := u(x + k)$$
\end{definition}

We are now able to introduce our first result.
\begin{theorem}\label{th1} Let $N\geq3$, $\alpha\in(0,N)$, $p\in(\frac{N+\alpha}N,\frac{N+\alpha}{N-2})$ and $(V_1)-(V_2)$ hold. Then,
	for each $\lambda\in(a, b)$, there exist infinitely many geometrically distinct solutions $u_\lambda\in H^1(\R^N)$ of \eqref{Choqeq}.
\end{theorem}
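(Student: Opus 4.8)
\textbf{Proof proposal for Theorem \ref{th1}.}

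The plan is to recast \eqref{Choqeq} (with $V$ replaced by $V-\lambda$, so that the spectral gap becomes $(a-\lambda,b-\lambda)\ni 0$) as a critical point problem for the strongly indefinite functional
\begin{equation*}
\Phi_\lambda(u)=\tfrac12\langle(-\Delta+V-\lambda)u,u\rangle-\tfrac1{2p}\int_{\R^N}(I_\alpha\ast|u|^p)|u|^p\,dx
\end{equation*}
on $H^1(\R^N)$, and to produce infinitely many geometrically distinct critical points by a linking/fountain-type argument adapted to the $\mathbb{Z}^N$-symmetry. First I would set up the functional-analytic framework: using the spectral decomposition of $L:=-\Delta+V-\lambda$ one writes $H^1(\R^N)=E^+\oplus E^-$ with $L$ positive definite on $E^+$ and negative definite on $E^-$, and equips $E$ with the equivalent norm $\|u\|^2=\langle|L|^{\frac12}u,|L|^{\frac12}u\rangle$, so that $\Phi_\lambda(u)=\tfrac12\|u^+\|^2-\tfrac12\|u^-\|^2-\Psi(u)$, where $\Psi(u)=\tfrac1{2p}\int(I_\alpha\ast|u|^p)|u|^p$. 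By the Hardy–Littlewood–Sobolev inequality together with the Sobolev embedding, the condition $p\in(\frac{N+\alpha}{N},\frac{N+\alpha}{N-2})$ guarantees that $\Psi$ is well defined, of class $C^1$, with $\Psi\geq0$, $\Psi(0)=0$, $\Psi'$ compact on bounded sets modulo translations, and superquadratic (indeed the Ambrosetti–Rabinowitz condition $2p\Psi(u)=\langle\Psi'(u),u\rangle$ holds with $2p>2$). These are exactly the abstract hypotheses under which the critical point theory for strongly indefinite functionals with a noncompact symmetry group applies.

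Next I would verify the geometric (linking) conditions. Positivity of $b-\lambda$ and negativity of $a-\lambda$ give a mountain-pass-type geometry in the $E^+$ direction: there exist $\rho,\beta>0$ with $\Phi_\lambda\geq\beta$ on $\{u\in E^+:\|u\|=\rho\}$, because $\Psi$ vanishes to order $2p>2$ at the origin; and on any finite-dimensional subspace $E^-\oplus E_0$ with $E_0\subset E^+$ finite-dimensional, $\Phi_\lambda\to-\infty$ as $\|u\|\to\infty$, again by superquadraticity of $\Psi$ (one uses here that on a finite-dimensional space the $L^{2p}$-type norm controlling $\Psi$ from below is equivalent to $\|\cdot\|$, combined with the negative sign of $\Phi_\lambda$ on $E^-$). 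This is the standard indefinite linking geometry. To get \emph{infinitely many} solutions I would exploit the oddness of the nonlinearity, $\Phi_\lambda(-u)=\Phi_\lambda(u)$, and run a $\mathbb{Z}_2$-equivariant version of the generalized linking theorem for strongly indefinite functionals — for instance the setting of Bartsch–Ding or the Benci–Rabinowitz-type scheme with a sequence of finite-dimensional subspaces of $E^+$ — to obtain an unbounded sequence of critical levels $c_n\to\infty$, hence a sequence of nontrivial critical points $u_n$ with $\Phi_\lambda(u_n)=c_n$.

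The main obstacle is the lack of compactness, and this is where the periodicity of $V$ and the notion of \emph{geometrically distinct} solutions enter decisively. A Palais–Smale (or Cerami) sequence $(u_k)$ at level $c$ is bounded (the superquadratic condition plus the $\|u_k^+\|^2-\|u_k^-\|^2$ structure forces $\|u_k\|^2\lesssim c+o(\|u_k\|)$ in the usual way), but it need not be relatively compact because of the translation invariance; one must instead run a concentration-compactness / vanishing–nonvanishing dichotomy. If $\liminf_k\sup_{y\in\R^N}\int_{B_1(y)}|u_k|^2>0$ one translates by $k_k\in\mathbb{Z}^N$ (using the $\mathbb{Z}^N$-periodicity of the whole functional) so that the shifted sequence converges weakly to a nontrivial critical point; vanishing is excluded because it would force $\Psi(u_k)\to0$ and hence $c=0$, contradicting $c\geq\beta>0$. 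The delicate part, then, is to show that the linking/fountain levels $c_n$ really do diverge \emph{and} that the corresponding critical points lie in distinct $\mathbb{Z}^N$-orbits — for this one argues by contradiction: if only finitely many orbits occurred, one could, following the discreteness-of-critical-values arguments of Ackermann and of Bartsch–Ding, derive a lower bound on the distance between consecutive critical levels incompatible with having infinitely many of them below any fixed height, or alternatively use the $(PS)_c^*$-type condition relative to the group action. I expect the bulk of the technical work to be precisely this compactness-modulo-$\mathbb{Z}^N$ analysis together with checking that the nonlocal term $\Psi$ behaves, under splitting $u_k=v_k+w_k$ into a profile plus remainder, with the Brezis–Lieb-type additivity $\Psi(v_k+w_k)=\Psi(v_k)+\Psi(w_k)+o(1)$ needed to iterate the profile decomposition — a property that for the Choquard nonlinearity follows from the Hardy–Littlewood–Sobolev inequality but requires some care.
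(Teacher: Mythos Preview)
Your proposal is correct and follows essentially the same route as the paper: the spectral decomposition $H^1=E_\lambda^+\oplus E_\lambda^-$, the verification of the linking geometry (the paper's Lemmas~\ref{lm2.2}--\ref{lm2.3}), boundedness of Palais--Smale sequences via the identity $2p\Psi(u)=\langle\Psi'(u),u\rangle$ (Lemma~\ref{lm2.1}), and then the appeal to the Bartsch--Ding abstract multiplicity theorem (Theorem~\ref{th4.6}) together with Ackermann's profile decomposition to handle the $(PS)_I$-attractor condition $(\Phi_6)$. The paper is in fact terser than you are at the multiplicity step---it simply writes ``Similar to \cite{A}, by using Theorem~4.2 in \cite{BD}\ldots''---whereas you spell out the concentration-compactness dichotomy, the nonlocal Br\'ezis--Lieb splitting, and the orbit-counting argument; these are exactly the ingredients the paper later details in Section~5 (Lemmas~\ref{lm4.3}--\ref{lm2.4}) for the harder case $\lambda=a$, and they transfer without change to $\lambda\in(a,b)$.
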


The restrictions on the parameter $p$ follow from the variational approach: it guarantees  differentiability properties of the  energy functional. It is remarkable the appearance,  in the class of the Choquard type problems, of a lower nonlinear restriction.
\par \bigskip
The next result concerns the bifurcation from the right boundary point $b$ of the spectral gap, where we assume the following
\begin{definition} A point $\lambda\in\R$ is called a gap-bifurcation (from zero) point for \eqref{Choqeq} if there exists a
	sequence $\{(\lambda_n, u_n)\}$ of solutions of \eqref{Choqeq} such that $\lambda_n\in \rho(-\Delta+V)$, $\lambda_n\to\lambda$ and $\|u_n\|_{H^1(\R^N)}\to 0$ as $n \to\infty$,
	where $\rho(-\Delta+V) = \R\setminus\sigma(-\Delta+V)$ is the resolvent set for $-\Delta+V$.
\end{definition}

\begin{theorem}\label{th2} Let $N\geq3$, $\alpha\in(0,N)$, $p\in(\frac{N+\alpha}{N},\frac{N+\alpha}{N-2})$ and $(V_1)-(V_2)$ hold.
	For any $\lambda\in(a,b)$, let $u_\lambda$ be a solution of equation \eqref{Choqeq} obtained in Theorem \ref{th1}, then
	$$\Phi_\lambda(u_\lambda) = O((b-\lambda)^{\frac{2p-Np+N+\alpha}{2p-2}})\to0~~\text{as}~~\lambda\to b^-$$
	Moreover, if  $\frac{N+\alpha}{N}<p < \frac{N+\alpha+2}{N}$, then
	$$\|u_\lambda\|_{H^1} = O((b-\lambda)^{\frac{2-Np+N+\alpha}{4p-4}})\to0~~\text{as}~~\lambda\to b^-.$$
	Moreover, $b$ is the only possible gap-bifurcation (from zero) point for \eqref{Choqeq} in $[a,b]$.
\end{theorem}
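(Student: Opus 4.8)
The plan is to work with the $C^1$ functional $\Phi_\lambda(u)=\frac12 Q_\lambda(u)-\frac1{2p}N_p(u)$ on $H^1(\R^N)$, where $Q_\lambda(u)=\int_{\R^N}(|\nabla u|^2+(V-\lambda)u^2)$ and $N_p(u)=\int_{\R^N}(I_\alpha\ast|u|^p)|u|^p$, and to use the spectral splitting $H^1(\R^N)=E^+\oplus E^-$ associated with $A=-\Delta+V$ (positive, resp. negative, spectral subspaces), with equivalent norm $\|u\|^2=\||A|^{1/2}u\|_2^2$. The one structural fact I will use throughout is the degeneracy of $Q_\lambda$ on $E^+$ as $\lambda\to b$: for $\lambda\in(0,b)$ one has $Q_\lambda(w)\ge(b-\lambda)\|w\|_2^2$ and $Q_\lambda(w)\ge\frac{b-\lambda}{b}\|w\|^2$ for $w\in E^+$, whereas $Q_\lambda(w)\le-\|w\|^2$ for $w\in E^-$. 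I take $u_\lambda$ from Theorem \ref{th1} to be the ground state, so $u_\lambda$ lies on the generalized Nehari--Pankov manifold $\mathcal M_\lambda$, with $\Phi_\lambda(u_\lambda)=c_\lambda:=\inf_{\mathcal M_\lambda}\Phi_\lambda=\inf_{\varphi\in E^+\setminus\{0\}}\max_{\R^+\varphi\oplus E^-}\Phi_\lambda$; a fact needed in the last step is that every nontrivial critical point of $\Phi_\lambda$ lies in $\mathcal M_\lambda$, so $\Phi_\lambda\ge c_\lambda$ on all nontrivial solutions. On $\mathcal M_\lambda$ one has $\langle\Phi_\lambda'(u),u^{\pm}\rangle=0$, hence $Q_\lambda(u)=N_p(u)$ and $\Phi_\lambda(u)=\frac{p-1}{2p}N_p(u)=\frac{p-1}{2p}Q_\lambda(u)$.

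For the energy estimate I would bound $c_\lambda$ from above by testing the minimax with a truncated, $L^2$--normalized band--edge state $\varphi_R$, namely the $E^+$--projection of $R^{-N/2}\chi(\cdot/R)\,\mathrm{Re}\bigl(e^{ik_0\cdot x}\psi_{k_0}(x)\bigr)$, where $e^{ik_0\cdot x}\psi_{k_0}$ is a Bloch wave at the bottom $b$ of the upper band and $\chi$ a fixed cut--off. A commutator computation using self--adjointness of $A$ gives $\langle(A-b)\varphi_R,\varphi_R\rangle\lesssim R^{-2}$, hence $Q_\lambda(\varphi_R)\lesssim R^{-2}+(b-\lambda)$, while a scaling computation gives $N_p(\varphi_R)\sim R^{\,N+\alpha-Np}$. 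A linking argument over a bounded half--ball in $\R^+\varphi_R\oplus E^-$ of radius $\sim\|\varphi_R\|\,(Q_\lambda(\varphi_R)/N_p(\varphi_R))^{1/(2p-2)}$, on which — after dropping the nonpositive $E^-$ and nonlinear contributions — $\Phi_\lambda(s\varphi_R+v)\le\frac{s^2}{2}Q_\lambda(\varphi_R)$, yields $c_\lambda\lesssim Q_\lambda(\varphi_R)^{p/(p-1)}N_p(\varphi_R)^{-1/(p-1)}$. Optimizing $R\sim(b-\lambda)^{-1/2}$ gives $Q_\lambda(\varphi_R)\sim b-\lambda$ and $N_p(\varphi_R)\sim(b-\lambda)^{(Np-N-\alpha)/2}$, whence $c_\lambda\lesssim(b-\lambda)^{\theta}$ with $\theta=\frac{2p-Np+N+\alpha}{2p-2}>0$ for $p<\frac{N+\alpha}{N-2}$; since $\Phi_\lambda(u_\lambda)=c_\lambda$, this is the first assertion.

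For the $H^1$ estimate I would start from $N_p(u_\lambda)=Q_\lambda(u_\lambda)=\frac{2p}{p-1}c_\lambda\lesssim(b-\lambda)^\theta$ and couple it with the equations $Q_\lambda(u_\lambda^{\pm})=\int(I_\alpha\ast|u_\lambda|^p)|u_\lambda|^{p-2}u_\lambda\,u_\lambda^{\pm}$. Hardy--Littlewood--Sobolev plus H\"older give $|Q_\lambda(u_\lambda^{\pm})|\le C\|u_\lambda\|_q^{2p-1}\|u_\lambda^{\pm}\|_q$ with $q=\frac{2Np}{N+\alpha}\in(2,2^*)$; with Sobolev and the $E^+$--degeneracy this gives $\|u_\lambda^-\|\lesssim\|u_\lambda\|_q^{2p-1}$ and $\|u_\lambda^+\|\lesssim(b-\lambda)^{-1}\|u_\lambda\|_q^{2p-1}$. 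Feeding in the Gagliardo--Nirenberg inequality $\|u_\lambda\|_q\lesssim\|u_\lambda\|_2^{1-\gamma}\|\nabla u_\lambda\|_2^\gamma$ (with $2p\gamma=Np-N-\alpha$), together with $\|\nabla u_\lambda\|_2^2\le Q_\lambda(u_\lambda)+C\|u_\lambda\|_2^2\lesssim(b-\lambda)^\theta+\|u_\lambda\|_2^2$ from the equation and with $N_p(u_\lambda)\lesssim(b-\lambda)^\theta$, one arrives at a closed inequality of the schematic form $(b-\lambda)\|u_\lambda\|_{H^1}^2\lesssim(b-\lambda)^\theta+\|u_\lambda\|_{H^1}^{4p-2}$. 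The restriction $p<\frac{N+\alpha+2}{N}$ is exactly the $L^2$--subcriticality $2p\gamma<2$, i.e.\ $\theta-1=\frac{2-Np+N+\alpha}{2p-2}>0$, which lets one absorb the higher--order term and conclude $\|u_\lambda\|_{H^1}^2\lesssim(b-\lambda)^{\theta-1}$, that is $\|u_\lambda\|_{H^1}\lesssim(b-\lambda)^{\frac{2-Np+N+\alpha}{4p-4}}$. The genuinely delicate point is the a priori smallness $\|u_\lambda\|_{H^1}\to0$ as $\lambda\to b^-$, without which the absorption step is circular; I would get it by contradiction: if $\|u_{\lambda_n}\|_{H^1}\not\to0$, then since $N_p(u_{\lambda_n})=\frac{2p}{p-1}c_{\lambda_n}\to0$, a concentration--compactness/Lions argument in the periodic setting (excluding vanishing by testing against $u_{\lambda_n}^{\pm}$, excluding non--vanishing by a $\Z^N$--translation) produces a nonzero weak limit solving $-\Delta u+(V-b)u=(I_\alpha\ast|u|^p)|u|^{p-2}u$ with $\Phi_b(u)\le\liminf c_{\lambda_n}=0$, which is impossible since a nontrivial solution satisfies $\Phi_b(u)=\frac{p-1}{2p}N_p(u)>0$.

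For the last assertion I would show that $c_\lambda$ is bounded below by a positive constant for $\lambda$ in any compact subset of $[a,b)$, which precludes bifurcation from zero there. Indeed, for $u\in\mathcal M_\lambda$ one has $u^+\ne0$, $Q_\lambda(u^+)>0$, and, since $u$ maximizes $\Phi_\lambda$ on $\R^+u^+\oplus E^-$, $\Phi_\lambda(u)\ge\max_{t\ge0}\Phi_\lambda(tu^+)=\frac{p-1}{2p}\,Q_\lambda(u^+)^{p/(p-1)}N_p(u^+)^{-1/(p-1)}$; by Hardy--Littlewood--Sobolev and Sobolev $N_p(u^+)\le C\|u^+\|^{2p}$ with $C$ independent of $\lambda$, so for $\lambda\le0$ (using $Q_\lambda(u^+)\ge\|u^+\|^2$) one gets $\Phi_\lambda(u)\ge\frac{p-1}{2p}C^{-1/(p-1)}=:c_0>0$, hence $c_\lambda\ge c_0$, while for $\lambda\in(0,b)$ (using $Q_\lambda(u^+)\ge\frac{b-\lambda}{b}\|u^+\|^2$) one gets $c_\lambda\ge c_0(\frac{b-\lambda}{b})^{p/(p-1)}$; in both cases $\inf_{[a,\beta]}c_\lambda>0$ for every $\beta<b$, the bound degenerating only as $\lambda\to b$. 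Now if $(\lambda_n,u_n)$ were a bifurcating sequence with $\lambda_n\to\lambda_0\in[a,b)$, $\lambda_n\in\rho(-\Delta+V)$ and $\|u_n\|_{H^1}\to0$, then each $u_n$ is a nontrivial solution, hence lies in $\mathcal M_{\lambda_n}$, hence $N_p(u_n)=\frac{2p}{p-1}\Phi_{\lambda_n}(u_n)\ge\frac{2p}{p-1}c_{\lambda_n}\ge\delta>0$ for $n$ large, contradicting $N_p(u_n)\le C\|u_n\|_{H^1}^{2p}\to0$. Therefore no gap--bifurcation from zero occurs at any point of $[a,b)$, and $b$ is the only possibility in $[a,b]$. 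The main obstacle overall is the a priori smallness required in the $H^1$ estimate; everything else is either a scaling computation or the soft argument just described. \qed
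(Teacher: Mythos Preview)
Your energy estimate via truncated Bloch waves and the scaling $R\sim(b-\lambda)^{-1/2}$ is essentially the paper's argument; the paper is a bit more careful about the $E^-$ contribution in the linking sup (handled there via the convexity inequality $J(v+s\zeta_\lambda)\ge s^{2p}(2^{1-2p}J(\zeta_\lambda)-J(v/s))$ together with $Q_\lambda(v)+s^2Q_\lambda(\zeta_\lambda)\ge0$), but the outcome is the same. Your argument for the last assertion via a uniform lower bound on the Nehari--Pankov level is a valid alternative; the paper instead uses convexity of $J$ directly: from $\langle J'(u_\lambda),2u_\lambda^+-u_\lambda\rangle\le J(2u_\lambda^+)-J(u_\lambda)$ one reads off $\beta_\lambda\|u_\lambda^+\|_{H^1}^2\le C\|u_\lambda^+\|_{H^1}^{2p}$, hence $\|u_\lambda^+\|_{H^1}^{2p-2}\ge C\beta_\lambda$ for every nontrivial solution, without invoking the manifold.

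The $H^1$ estimate is where your route is genuinely heavier than necessary and carries a gap. You yourself flag the circularity: your closed inequality needs a priori smallness, and you propose to obtain it by contradiction via concentration--compactness. But that argument requires $\{u_{\lambda_n}\}$ to be \emph{bounded} in $H^1$ as $\lambda_n\to b^-$, and nothing you have written provides this: the equivalence constant between $\|\cdot\|_{E_\lambda}$ and $\|\cdot\|_{H^1}$ degenerates precisely at $b$, so ``$\|u_{\lambda_n}\|_{H^1}\not\to0$'' does not exclude $\|u_{\lambda_n}\|_{H^1}\to\infty$, and the weak-limit step is unavailable.

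The paper bypasses all of this with one observation you are missing: as $\lambda\to b^-$ the constant $\beta_\lambda\to0$, but $\alpha_\lambda=\alpha_0>0$ stays fixed (the negative spectral subspace is unaffected). Testing $\Phi_\lambda'(u_\lambda)=0$ against $u_\lambda^+-u_\lambda^-$ gives
\[
\beta_\lambda\|u_\lambda^+\|_{H^1}^2+\alpha_0\|u_\lambda^-\|_{H^1}^2 \ \le \ J(u_\lambda)\;-\;2\int(I_\alpha\ast|u_\lambda|^p)|u_\lambda|^{p-2}u_\lambda\,u_\lambda^-\,dx.
\]
The cross term is at most $C\,J(u_\lambda)^{1-\frac1{2p}}\|u_\lambda^-\|_{H^1}$ by the Coulomb-norm inequality $\langle J'(u),v\rangle\le J(u)^{1-\frac1{2p}}J(v)^{\frac1{2p}}$; Young's inequality then absorbs $\tfrac{\alpha_0}{2}\|u_\lambda^-\|_{H^1}^2$ into the left, and since $J(u_\lambda)\le\tfrac{2p}{p-1}c_\lambda$ one obtains directly
\[
\beta_\lambda\|u_\lambda^+\|_{H^1}^2+\tfrac{\alpha_0}{2}\|u_\lambda^-\|_{H^1}^2 \ \le \ C\,c_\lambda+C\,c_\lambda^{\,2-\frac1p},
\]
whence $\|u_\lambda\|_{H^1}^2\lesssim c_\lambda/\beta_\lambda$ with no bootstrap and no contradiction argument. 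The restriction $p<\frac{N+\alpha+2}{N}$ enters only at the end, to make $\theta-1=\frac{2-Np+N+\alpha}{2p-2}>0$.
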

 
The proofs are based on variational methods applied to the functional
\begin{equation}\label{phi}
\aligned
\Phi_\lambda(u)=\frac{1}{2}Q_\lambda(u)-\frac{1}{2p}J(u),
\endaligned
\end{equation}
where 
\begin{equation}\label{J0}
J(u):=\int_{\R^N}\left(I_\alpha\ast|u|^{p}\right)|u|^{p}dx
\end{equation}
The standard assumptions $\alpha \in (0,N)$ and $\frac{N+\alpha}N<p<\frac{N+\alpha}{N-2}$ imply that $\Phi_\lambda: H^1(\R^N)\to \R$ is of class $\mathcal C^1$ and that critical points of $\Phi_\lambda$ are weak solutions of \eqref{Choqeq}. By assumption $(V_2)$, we have $H^1 = E_{\lambda}^-\oplus E_\lambda^+$ corresponding to the decomposition of the spectrum: we will apply a generalized linking theorem due to Kryszewski-Szulkin, inspired by \cite{BD, A}. We emphasize that some of the results stated in the above theorems could also be obtained by applying abstract results in bifurcation theory (for example, from the results stated in Section 5 of \cite{S2}). However, verifying the validity of all the assumptions, in particular the so-called condition $T(\delta)$ is not trivial, and it would require the proofs of some intermediate lemmas. Hence, we prefer to give here a self-contained approach, referring to the cited papers for further generalizations.

We then address the interesting case $\lambda =a$, that is we study the Choquard-Schr\"odinger equation in a right boundary point of the spectrum  of the Schr\"odinger operator. As in the local framework, this location of $\lambda$  causes a loss of completeness in the decomposition of the domain of the Schr\"odinger operator. Inspired by  Bartsch and Ding \cite{BD},  we  find infinitely many geometrically distinct solutions for \eqref{Choqeq} which lie in $H^2_{loc}(\R^N)$ but not necessarily in $H^1(\R^N)$ (see Theorem \ref{th3}): the presence of the nonlocal term adds an extra difficulty in the choice of a suitable space framework and in the analysis of (PS) sequences. The results are stated in the following 

\begin{theorem}\label{th3} Let $N\geq3$, $\alpha\in(0,N)$, $p\in(\frac{N+\alpha}{N},\frac{N+\alpha}{N-2})$ and $(V_1)-(V_2)$ hold. For $\lambda=a$, \eqref{Choqeq} possess a nontrivial weak solution $u\in H^2_{loc}(\R^N)$.
Moreover, there exist infinitely many geometrically distinct solutions for \eqref{Choqeq} in $ H^2_{loc}(\R^N)$.
\end{theorem}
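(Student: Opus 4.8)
\textbf{Proof proposal for Theorem \ref{th3}.}
The plan is to handle $\lambda=a$ as a limiting/degenerate version of the linking argument used for Theorems \ref{th1}--\ref{th2}, following the strategy of Bartsch--Ding. The basic obstruction is that when $\lambda=a$, the quadratic form $Q_a(u)=\int(|\nabla u|^2+(V-a)|u|^2)$ is only nonnegative on $E_a^-$ (the part of the spectrum in $(-\infty,a]$) rather than negative definite: $a$ is the bottom of the positive-spectral part only up to the point mass at $a$, so the natural ``working space'' is not $H^1$. First I would set $A=-\Delta+V-a\ge 0$, write the spectral decomposition $H^1=E^-\oplus E^+$ relative to $\sigma(A)\cap(-\infty,0)$ and $\sigma(A)\cap[0,\infty)$, and introduce the completion $E$ of $H^1$ with respect to the norm generated by $|A|$ on $E^+$ together with the ordinary $H^1$ norm on the (finite-band, but infinite-dimensional) negative part. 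Because $0\in\sigma(A)$ is not isolated and lies at the edge, $E$ is a genuinely larger space than $H^1$; one checks via the Riesz potential estimates (Hardy--Littlewood--Sobolev, exactly as in the $\mathcal C^1$ discussion in the excerpt, with $\frac{N+\alpha}N<p<\frac{N+\alpha}{N-2}$) that $J$ and hence $\Phi_a$ still extend to a $\mathcal C^1$ functional on $E$, and that $E\hookrightarrow H^2_{loc}(\R^N)$ so that critical points are weak solutions in $H^2_{loc}$.

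Next I would verify the linking geometry for $\Phi_a$ on $E$: boundedness above on $E^-$, a mountain-pass-type positivity on a small sphere in $E^+$, and a linking inequality on the boundary of a suitable finite-dimensional-plus-half-space set, using that $Q_a\ge 0$ on $E^-$ and the homogeneity of $J$ (degree $2p>2$). To produce \emph{infinitely many geometrically distinct} solutions I would use the $\Z^N$-invariance of $\Phi_a$ (the potential $V$ is $1$-periodic) together with the generalized linking / pseudo-index machinery of Kryszewski--Szulkin adapted to the $\Z^N$-action, exactly as invoked for Theorem \ref{th1}; this yields an unbounded sequence of critical values $c_k\to\infty$, and a standard argument (if only finitely many orbits existed the critical values would stay bounded, via a discreteness/compactness argument on the reduced functional) gives infinitely many distinct orbits.

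The genuinely hard step is the compactness analysis of Palais--Smale (or Cerami) sequences in the enlarged space $E$, because two separate effects combine: the translation non-compactness coming from the periodicity of $V$, and the loss of an equivalent $H^1$-norm coming from $0\in\sigma(A)$ being an edge point. I would argue as follows: given a bounded $(PS)_c$ sequence $(u_n)\subset E$ with $c>0$, first rule out vanishing — if $\sup_{y\in\R^N}\int_{B_1(y)}|u_n|^2\to 0$ then by a Lions-type lemma adapted to the nonlocal term $J(u_n)\to 0$, forcing $c=0$; then shift by $k_n\in\Z^N$ to recover a nonzero weak limit $u$, which is a nontrivial critical point. The delicate point, absent in the local case, is controlling the nonlocal interaction $\int(I_\alpha*|u_n|^p)|u_n|^p$ along the decomposition $u_n=u(\cdot-k_n)+$ (rest): here I would use the Brezis--Lieb-type splitting for the Choquard energy (available in the cited survey \cite{MS2}) together with the semigroup property \eqref{sp} of $I_\alpha$ to show the energy and the derivative split cleanly, so that an iteration removes ``bubbles'' one at a time and the Cerami condition holds modulo $\Z^N$-translations. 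Once this modified $(PS)$ condition is in place in $E$, the abstract critical-point theorem applies and delivers the stated solutions in $H^2_{loc}(\R^N)$; a final elliptic-regularity bootstrap on the equation $-\Delta u=(a-V)u+(I_\alpha*|u|^p)|u|^{p-2}u$, using $u\in E$ and the subcritical growth, confirms $u\in H^2_{loc}$.
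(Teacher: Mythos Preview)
Your outline has the right shape (enlarge the space, run a linking argument, recover compactness modulo $\Z^N$), but there is a genuine gap at the step you pass over most quickly: the construction of the enlarged space and the claim that $J$ extends to it as a $\mathcal C^1$ functional with bounded (PS) sequences. First, your spectral bookkeeping is off: with $A=-\Delta+V-a$ the point $0$ is the \emph{right} endpoint of the negative spectral component, so the degeneracy sits on $E_a^-$ and one has $Q_a\le 0$ there (not $\ge 0$); the norm you describe (``$|A|$ on $E^+$, $H^1$ on $E^-$'') is then either equivalent to $H^1$ (so the completion is not larger) or, if you meant the $|A|^{1/2}$-norm on the degenerate part, it does not control any Lebesgue norm near the spectral edge and $J$ fails to be defined on the completion. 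The paper's key point (see Remark~\ref{remarkMMV}) is that the direct transcription of Bartsch--Ding's local construction --- completing with respect to $\|\cdot\|_{E_a}+|\cdot|_{2Np/(N+\alpha)}$ --- already fails in the nonlocal setting, because one cannot prove (PS) sequences are bounded: the energy identity only yields control of $J(u_n)$, and $J$ does not dominate any $L^q$-norm. The remedy is to complete with respect to $\|\cdot\|_{E_a}+\|\cdot\|_{\mathcal Q^{\alpha,p}}$, where $\|u\|_{\mathcal Q^{\alpha,p}}^{2p}=J(u)$ is the Coulomb norm; the local $H^1_{loc}$ (and then $H^2_{loc}$) embedding of Lemma~\ref{lm4.8} uses the pointwise lower bound $\|u\|_{\mathcal Q^{\alpha,p}}^{2p}\ge R^{\alpha-N}\bigl(\int_{B_{R/2}}|u|^p\bigr)^2$, which has no analogue for the $|A|^{1/2}$-norm alone.

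A second, related issue: the paper does \emph{not} apply the linking theorem directly on the enlarged space. $E_{\mathcal Q}$ is only a reflexive Banach space, and condition~(ii) of Theorem~\ref{LT} fails because $Q_a$ is merely semidefinite on $E_a^-$. Instead one truncates the spectrum: on $E_j:=P_{a,-1/j}L^2\oplus E_a^+$ the norms $\|\cdot\|_{E_a}$ and $\|\cdot\|_{H^1}$ are equivalent, Theorem~\ref{LT} applies to $\Phi_j:=\Phi_a|_{E_j}$, and a diagonal sequence $u_n\in E_{j_n}$ with $j_n\to\infty$ is a $\widetilde{(PS)}_c$-sequence whose boundedness in $E_{\mathcal Q}$ (Lemma~\ref{lm4.1}) then uses the Coulomb norm in an essential way. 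Your Brezis--Lieb/Lions analysis of the concentration step is fine and matches the paper, and the multiplicity is indeed obtained from the Bartsch--Ding abstract theorem (Theorem~\ref{th4.6} here) by verifying the $(PS)_I$-attractor condition; but none of that can start until the correct space $E_{\mathcal Q}$ and the approximation $E_j$ are in place.
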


The paper is organized as follows. In Section 2, we introduce the variational framework and the main variational tools.  In Section 3 we address the existence result when $\lambda \in (a,b)$, proving Theorem \ref{th1}. In 
Section 4, we prove the bifurcation from zero at the right boundary point $b$, Theorem \ref{th2}. In Section 5 we consider the  delicate case $\lambda=a$, proving Theorem \ref{th3}. 

As stated in Theorem  \ref{th2}, $b$ is the {\it{only}} possible gap-bifurcation point for $0$ in $[a,b]$: what about the branches of solutions $u_\lambda$, as $\lambda \to a^+$? This is an intriguing question, which seems still open also in the local case. Some perspectives will be proposed in the final Section 6.

 \vskip2mm
 \par\noindent
 We make use of the following notation:  
 \begin{itemize}
 \item $|\cdot|_{p}$ denotes the usual norm of the space $L^{p}(\mathbb{R}^N)$.
 \item $C, C_i , i = 1, 2, \cdot\cdot\cdot,$ will be repeatedly used to denote various positive constants whose exact values are irrelevant. 
  \item $B_R(x_0)$ denotes the ball $\{x\in\R^N: |x-x_0|\leq R\}$. 
  \item $o(1)$ denotes the infinitesimal as $n\to+\infty$.
  \item For the sake of simplicity, integrals over the whole $\R^N$ will be often written $\int$.
  \item For the sake of simplicity, we often omit the constant $A_\alpha$ in $I_\alpha$.
 \end{itemize}
 
 \vskip4mm

\section{The functional framework}
 \setcounter{equation}{0}
 In this section we present the functional settings and regularity properties of the functionals defined in \eqref{phi}, \eqref{J0}. We start recalling  a classical inequality which turns out to be a main tool in our arguments. 
 
 \begin{proposition}\cite{LL} (Hardy--Littlewood--Sobolev inequality.) Let $t$, $r>1$ and $0<\alpha<N$ with $\frac{1}{t}+\frac{1}{r}=1+\frac{\alpha}{N}$, $f\in L^{t}(\R^N)$ and $h\in L^{r}(\R^N)$. There exists a constant $C(N,\alpha,t,r)$, independent of $f,h$, such that
 	\begin{equation}\label{NHLS}
 	|I_\alpha\ast h|_{t'}\leq C(N,\alpha,t,r)|h|_r
 	\end{equation}
 	and
 	\begin{equation}\label{HLS}
 	\int\left(I_\alpha\ast h\right)f\ dx\leq C(N,\alpha,t,r) |f|_{t}|h|_{r},
 	\end{equation}
 	where $|\cdot|_{s}$ denotes the $L^{s}(\mathbb{R}^{N})$-norm for $s\in[1,\infty]$, and $t'$ denotes the conjugate exponent such that $\frac{1}{t'}+\frac{1}{t}=1$.
 \end{proposition}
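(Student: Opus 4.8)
The two estimates \eqref{NHLS} and \eqref{HLS} are equivalent up to the value of the constant: given \eqref{NHLS}, inequality \eqref{HLS} follows at once from H\"older's inequality applied to the conjugate pair $t,t'$, and conversely \eqref{NHLS} is recovered from \eqref{HLS} through the dual description of the $L^{t'}$-norm. So it is enough to establish the convolution bound $\bigl|\,|x|^{\alpha-N}\ast h\,\bigr|_{t'}\le C\,|h|_{r}$, where by hypothesis $\tfrac1{t'}=\tfrac1r-\tfrac\alpha N$; note that the assumptions $t,r>1$ are equivalent to $1<r<\tfrac N\alpha$ together with $1<t'<\infty$, and these strict inequalities will be used in an essential way.

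The plan is the classical Marcinkiewicz interpolation argument. First I would observe that the kernel $K(x)=|x|^{\alpha-N}$ belongs to the weak-Lebesgue space $L^{N/(N-\alpha),\infty}(\R^N)$, since its distribution function is $\bigl|\{\,|K|>\lambda\,\}\bigr|=c_N\lambda^{-N/(N-\alpha)}$. The heart of the matter is then the weak-type estimate: the sublinear operator $Th:=K\ast h$ maps $L^{r}(\R^N)$ into $L^{t',\infty}(\R^N)$. To prove it I would normalise $|h|_r=1$, fix $\lambda>0$, and split $K=K\mathbf{1}_{\{|x|\le R\}}+K\mathbf{1}_{\{|x|>R\}}$ with a radius $R=R(\lambda)$ to be chosen. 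On the tail part, H\"older's inequality gives $\bigl|K\mathbf{1}_{\{|x|>R\}}\ast h\bigr|_\infty\le\bigl|K\mathbf{1}_{\{|x|>R\}}\bigr|_{r'}=C\,R^{\alpha-N/r}$, the integral converging precisely because $\tfrac1r>\tfrac\alpha N$ (i.e. $t>1$); one then chooses $R$ so that this quantity equals $\lambda/2$. On the local part, Chebyshev's inequality combined with Young's convolution inequality and $\bigl|K\mathbf{1}_{\{|x|\le R\}}\bigr|_{1}=C\,R^{\alpha}$ yields $\bigl|\{\,|K\mathbf{1}_{\{|x|\le R\}}\ast h|>\lambda/2\,\}\bigr|\le C\,\lambda^{-r}R^{\alpha r}$. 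Substituting the chosen $R$ and simplifying the exponents, the power of $\lambda$ collapses exactly to $-t'$, which is the asserted weak-type bound. Finally, to upgrade from weak type to the strong-type estimate $T\colon L^{r}\to L^{t'}$ I would invoke the Marcinkiewicz interpolation theorem between two exponent pairs $(r_0,t'_0)$ and $(r_1,t'_1)$ with $1<r_0<r<r_1<\tfrac N\alpha$ (hence $1<t'_0<t'<t'_1<\infty$ and $r_i<t'_i$), for each of which the weak-type estimate just established is available; such a configuration exists exactly because $r$ lies \emph{strictly} between $1$ and $N/\alpha$, which is also why the endpoints $r=1$ and $t'=\infty$ must be excluded.

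The main difficulty, apart from the (routine) bookkeeping of exponents, is precisely this last passage from weak type to strong type: the elementary H\"older/Young splitting, however arranged, can only deliver the endpoint estimate $L^{r}\to L^{t',\infty}$, and recovering the genuine $L^{t'}$-norm requires the interpolation machinery (equivalently, a Hardy-type dyadic decomposition of the convolution). I note that this scheme yields \emph{some} admissible constant $C(N,\alpha,t,r)$, which is all that is needed in the sequel; computing the \emph{sharp} constant is a genuinely different and harder problem, solved by reducing to symmetric-decreasing rearrangements via the Riesz rearrangement inequality and then exploiting the conformal invariance of the underlying functional, as in Lieb's original work.
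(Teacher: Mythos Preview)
Your argument is correct and complete: the duality between \eqref{NHLS} and \eqref{HLS}, the weak-$L^{N/(N-\alpha)}$ membership of the kernel, the level-set splitting yielding the weak-type $(r,t')$ bound, and the Marcinkiewicz interpolation step are all standard and carried out accurately (the exponent bookkeeping checks out, and you correctly identify where the strict inequalities $1<r<N/\alpha$ enter).

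There is no proof in the paper to compare against: the proposition is simply quoted from Lieb--Loss \cite{LL} (Theorem~4.3 there) and used as a black box throughout. For what it is worth, the proof in \cite{LL} proceeds somewhat differently, via the layer-cake representation of $f$, $h$ and the kernel together with a direct estimate on the triple integral of characteristic functions, rather than through weak-type bounds and interpolation; your route is the one found, e.g., in Stein's \emph{Singular Integrals}. Both are classical and either would serve. Your closing remark on sharp constants is also accurate but, as you note, irrelevant for the present purposes.
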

 
 Let us now prove some properties of the Riesz potential $I_\alpha$.
 \begin{proposition}\label{pro3}
 	Let $\alpha\in(0,N)$ and $f,g\in L^{\frac{2N}{N+\alpha}}(\R^N)$. Then
 	\begin{equation}\label{R0}
 	\int(I_\alpha\ast f)gdx=\int\left(I_{\frac{\alpha}{2}}\ast f\right)\left(I_{\frac{\alpha}{2}}\ast g\right)dx.
 	\end{equation}
 	Moreover,
 	\begin{equation}\label{R1}
 	\aligned
 	\int(I_\alpha\ast f)fdx=\int\left(I_{\frac{\alpha}{2}}\ast f\right)^2dx \geq0
 	\endaligned
 	\end{equation}
 	and
 	\begin{equation}\label{R}
 	\aligned
 	\int(I_\alpha\ast f)gdx\leq \left[\int(I_\alpha\ast f)fdx\right]^{\frac 12}\left[\int(I_\alpha\ast g)gdx\right]^{\frac 12}.
 	\endaligned
 	\end{equation}
 \end{proposition}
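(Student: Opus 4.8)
The plan is to establish the three identities in Proposition \ref{pro3} by exploiting the semigroup property \eqref{sp} of the Riesz potential together with the Hardy--Littlewood--Sobolev inequality \eqref{HLS}, which guarantees that all the integrals below are finite whenever $f,g\in L^{2N/(N+\alpha)}(\R^N)$. First I would observe that for such $f$, the function $I_{\alpha/2}\ast f$ is well-defined and lies in $L^{2}(\R^N)$: indeed, applying \eqref{NHLS} with the exponents $r=\frac{2N}{N+\alpha}$ and $t'=2$ (so that $\frac1t+\frac1r=1+\frac{\alpha/2}{N}$ holds with $\alpha$ replaced by $\alpha/2$), we get $|I_{\alpha/2}\ast f|_{2}\le C\,|f|_{2N/(N+\alpha)}<\infty$, and likewise for $g$. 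This makes the right-hand sides of \eqref{R0}--\eqref{R1} meaningful as integrals of products of $L^{2}$ functions.

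For \eqref{R0}, the key step is to write, using \eqref{sp} with $\beta=\alpha/2$,
\begin{equation*}
I_\alpha\ast f = (I_{\alpha/2}\ast I_{\alpha/2})\ast f = I_{\alpha/2}\ast(I_{\alpha/2}\ast f),
\end{equation*}
and then to move one copy of $I_{\alpha/2}$ onto $g$ by the self-adjointness (symmetry) of convolution with the even kernel $I_{\alpha/2}$, i.e.\ $\int (I_{\alpha/2}\ast h_1)h_2 = \int h_1(I_{\alpha/2}\ast h_2)$. Concretely,
\begin{equation*}
\int (I_\alpha\ast f)g\,dx=\int \bigl(I_{\alpha/2}\ast(I_{\alpha/2}\ast f)\bigr)g\,dx=\int (I_{\alpha/2}\ast f)(I_{\alpha/2}\ast g)\,dx,
\end{equation*}
which is \eqref{R0}. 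Taking $g=f$ immediately yields \eqref{R1}, and the nonnegativity follows since the integrand $(I_{\alpha/2}\ast f)^2$ is pointwise nonnegative. Finally, \eqref{R} is just the Cauchy--Schwarz inequality in $L^{2}(\R^N)$ applied to \eqref{R0}:
\begin{equation*}
\int (I_{\alpha/2}\ast f)(I_{\alpha/2}\ast g)\,dx\le |I_{\alpha/2}\ast f|_{2}\,|I_{\alpha/2}\ast g|_{2}=\Bigl[\int (I_\alpha\ast f)f\,dx\Bigr]^{1/2}\Bigl[\int (I_\alpha\ast g)g\,dx\Bigr]^{1/2},
\end{equation*}
where in the last equality I used \eqref{R1} for both $f$ and $g$.

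The main obstacle — really the only point requiring care — is the justification of the manipulations of iterated convolutions: one must verify that Fubini's theorem applies when rearranging $\int\int\int I_{\alpha/2}(x-y)I_{\alpha/2}(y-z)f(z)g(x)\,dz\,dy\,dx$, and that the associativity $(I_{\alpha/2}\ast I_{\alpha/2})\ast f = I_{\alpha/2}\ast(I_{\alpha/2}\ast f)$ together with the semigroup identity \eqref{sp} holds in the relevant $L^p$ sense rather than merely formally. This is handled by the a priori $L^2$ bounds from \eqref{NHLS} noted above (so each partial convolution is a genuine $L^2$ or $L^{2N/(N+\alpha)}$ function), which make all the integrals absolutely convergent and legitimize the use of Fubini and of the $L^2$ duality pairing; once integrability is secured, the identities are routine. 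One should also recall, or cite from \cite{LL} or \cite{MS2}, that \eqref{sp} is valid precisely because of the normalization constant $A_\alpha$ fixed in the introduction.
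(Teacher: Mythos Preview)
Your proof is correct and follows essentially the same route as the paper: both use the semigroup property \eqref{sp} to split $I_\alpha=I_{\alpha/2}\ast I_{\alpha/2}$, then exploit the symmetry of convolution with the even kernel $I_{\alpha/2}$ (the paper does this via an explicit change of variable in the triple integral, you phrase it as self-adjointness), and finally apply Cauchy--Schwarz in $L^2$ for \eqref{R}. Your discussion of the $L^2$ bounds on $I_{\alpha/2}\ast f$ via \eqref{NHLS} and the Fubini justification is more explicit than the paper's, but the underlying argument is the same.
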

 \begin{proof}
 	By the semigroup property of the Riesz potential, \eqref{sp}, we have
 	$$\int(I_\alpha\ast f)gdx=\int(I_{\frac{\alpha}{2}}\ast I_{\frac{\alpha}{2}}\ast f)gdx=\int\int\int I_{\frac{\alpha}{2}}(y)I_{\frac{\alpha}{2}}(x-z-y)f(z)g(x)dxdydz.$$
 	Let $z=x'-y$, since $I_{\frac{\alpha}{2}}$ is even, we get
 	$$\int(I_\alpha\ast f)gdx=\int\int\int I_{\frac{\alpha}{2}}(y)I_{\frac{\alpha}{2}}(x'-x)f(x'-y)g(x)dxdydx'=
 	\int(I_{\frac{\alpha}{2}}\ast f)(I_{\frac{\alpha}{2}}\ast g)dx.$$
 	Thus \eqref{R0} holds. \eqref{R1} is obviously from \eqref{R0}.
 	
 	Using \eqref{R0} and the H\"{o}lder inequality, we obtain
 	\begin{equation*}
 	\aligned
 	\int(I_\alpha\ast f)gdx
 	=&\int\left(I_{\frac{\alpha}{2}}\ast f\right)\left(I_{\frac{\alpha}{2}}\ast g\right)dx \\
 	\leq &\left[\int\left(I_{\frac{\alpha}{2}}\ast f\right)^2dx\right]^{\frac{1}{2}}\left[\int\left(I_{\frac{\alpha}{2}}\ast g\right)^2dx\right]^{\frac{1}{2}}\\
 	= &\left[\int(I_\alpha\ast f)fdx\right]^{\frac{1}{2}}\left[\int(I_\alpha\ast g)gdx\right]^{\frac{1}{2}}.
 	\endaligned
 	\end{equation*}
 	This completes the proof.
 \end{proof}
 
Using the semigoup property of the Riesz potential, the nonlocal term \eqref{J0} in the energy functional can be written also as 
 		\begin{equation*}
 		J(u):=\int\left(I_\alpha\ast|u|^{p}\right)|u|^{p}dx=\int\left(I_\frac \alpha 2\ast|u|^{p}\right)^2dx.
 		\end{equation*}
 Our aim is now to define a \emph{natural} energy space associated with the energy functional $\Phi_\lambda$. As noted in the Introduction, the choice will depend on the location of $\lambda$ with respect to the spectrum of the Schr\"odinger operator. In what follows we state some properties of the nonlocal term $J(u)$, postponing to specific subsections the definitions and descriptions of the  functional frameworks. As part of the energy functional, the analysis  of $J(u)$ has been performed in several papers approaching variationally Choquard type equations. We mainly refer to the survey \cite{MS2}, and to \cite{MMVS}.
 
  It is easy to observe that $J$ is well defined on $H^1(\R^N)$, for any $\frac{N+\alpha}{N}\leq p \leq \frac{N+\alpha}{N-2}$: combining the  Hardy--Littlewood--Sobolev inequality \eqref{HLS} and the Sobolev inequality yields
 \begin{equation}\label{J1} 
 J(u)\leq C|u|^{2p}_{\frac{2Np}{N+\alpha}}\leq C\|u\|^{2p}_{H^1}.
 \end{equation}
 Furthermore, in \cite{MMVS} the authors noted that $J(u)$ is naturally settled in the so called \emph{Coulomb spaces} $\mathcal Q^{\alpha,p}$, defined as the vector spaces of measurable functions $u:\mathbb R^N\to \mathbb R$ such that $J(u)$ is finite. They also proved that the quantity
 \begin{equation}\label{Qap}
 \|u\|_{\mathcal Q^{\alpha, p}}:= \left(\int_{\mathbb R^N}\left|I_\frac \alpha 2\ast|u|^{p}\right|^2dx\right)^{\frac 1{2p}}
 \end{equation}
 defines a norm, which will guarantees the convexity of the functional $J$. Hence, inequality \eqref{J1} corresponds to the embedding  $H^{1}\subset L^{\frac{2Np}{N+\alpha}}\subset \mathcal Q^{\alpha,p}$. The paper \cite{MMVS} then introduces and carefully studies the Couloumb-Sobolev spaces and  regularity properties of $J$ in this framework, which differs from ours. Therefore, for the sake of completeness we state and prove some useful  properties of $J$ in $H^1$, even if some of them could be deduced by known results in literature.
  
 \begin{lemma}\label{J} The functional $J: H^1(\R^N)\mapsto\R$ satisfies the following properties:
 	\begin{itemize}
 		\item[$(i)$] $J$ is continuous and weakly sequentially lower semi-continuous.
 		\item[$(ii)$] For all $u, v\in H^1(\R^N)$, there is $C> 0$ such that
 		\begin{equation}\label{R3}
 		\langle J'(u), v\rangle\leq J(u)^{1-\frac{1}{2p}} J(v)^{\frac{1}{2p}}\leq C\|u\|^{2p-1}_{H^1}\|v\|_{H^1}.
 		\end{equation}
 		Moreover, $J'$ is weakly sequentially continuous. 
 		\item[$(iii)$] $J$ is even and convex, and for all $u, w\in H^1(\R^N)$
 		\begin{equation*}\label{R2}
 		J(u + w)\geq 2^{1-2p}J(u) - J(w).
 		\end{equation*}
 	\end{itemize}
 \end{lemma}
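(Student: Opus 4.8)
The plan is to establish the three items in the order \textit{(iii)}, \textit{(i)}, \textit{(ii)}: once convexity and strong continuity of $J$ are available, the weak lower semicontinuity in \textit{(i)} and the structure of \textit{(ii)} become short. The only analytic inputs are the Hardy--Littlewood--Sobolev inequality and the Sobolev embedding $H^1(\R^N)\hookrightarrow L^{\frac{2Np}{N+\alpha}}(\R^N)$, which is available because $\frac{N+\alpha}N\le p\le\frac{N+\alpha}{N-2}$ places the exponent $\frac{2Np}{N+\alpha}$ in $[2,2^*]$. For \textit{(iii)}: evenness is clear. For convexity, note that since $p>\frac{N+\alpha}N>1$ the scalar function $t\mapsto|t|^p$ is convex, so for $\theta\in[0,1]$ one has pointwise $|\theta u+(1-\theta)w|^p\le(\theta|u|+(1-\theta)|w|)^p\le\theta|u|^p+(1-\theta)|w|^p$; applying $I_{\alpha/2}\ast(\cdot)$ (which is linear and order preserving), then taking $L^2$-norms of these nonnegative functions and using the triangle inequality in $L^2$, yields $J(\theta u+(1-\theta)w)^{1/2}\le\theta J(u)^{1/2}+(1-\theta)J(w)^{1/2}$, so $J^{1/2}$ is convex and hence $J=(J^{1/2})^2$ is convex. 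For \eqref{R2}, from $|u|^p\le(|u+w|+|w|)^p\le 2^{p-1}(|u+w|^p+|w|^p)$, applying $I_{\alpha/2}\ast(\cdot)$, squaring, using $(a+b)^2\le 2(a^2+b^2)$ and integrating gives $J(u)\le 2^{2p-1}\big(J(u+w)+J(w)\big)$, which is \eqref{R2}.

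For \textit{(i)}: to prove continuity I would write $J(u)-J(v)=\int\big(I_{\alpha/2}\ast(|u|^p-|v|^p)\big)\big(I_{\alpha/2}\ast(|u|^p+|v|^p)\big)$, bound this by Cauchy--Schwarz in $L^2$ and \eqref{NHLS} applied to $I_{\alpha/2}$ (with output exponent $2$ and source exponent $\frac{2N}{N+\alpha}$), then use the elementary inequality $\big||a|^p-|b|^p\big|\le p(|a|^{p-1}+|b|^{p-1})|a-b|$ (valid for $p>1$), Hölder with exponents $\frac p{p-1}$ and $p$, and the Sobolev embedding above, to arrive at $|J(u)-J(v)|\le C(1+\|u\|_{H^1}+\|v\|_{H^1})^{2p-1}\|u-v\|_{H^1}$; in particular $J$ is continuous, and \eqref{J1} is the case $v=0$. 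Weak sequential lower semicontinuity is then immediate, since a convex strongly continuous functional on a Banach space is weakly sequentially lower semicontinuous (alternatively, directly: along a subsequence realizing the $\liminf$ one has $u_n\to u$ a.e., hence $|u_n|^p\rightharpoonup|u|^p$ in $L^{\frac{2N}{N+\alpha}}$, hence $I_{\alpha/2}\ast|u_n|^p\rightharpoonup I_{\alpha/2}\ast|u|^p$ in $L^2$, and $J$ is the square of the $L^2$-norm).

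For \textit{(ii)}: a direct differentiation gives $\langle J'(u),v\rangle=2p\int(I_\alpha\ast|u|^p)|u|^{p-2}uv$. Since $I_\alpha\ast|u|^p\ge0$, estimating $|u|^{p-2}uv\le|u|^{p-1}|v|$ pointwise and using the identity $(I_\alpha\ast|u|^p)|u|^{p-1}|v|=\big[(I_\alpha\ast|u|^p)|u|^p\big]^{1-\frac1p}\big[(I_\alpha\ast|u|^p)|v|^p\big]^{\frac1p}$ together with Hölder's inequality (exponents $\frac p{p-1}$ and $p$) gives $\langle J'(u),v\rangle\le C\,J(u)^{1-\frac1p}\big(\int(I_\alpha\ast|u|^p)|v|^p\big)^{\frac1p}$; then \eqref{R} with $f=|u|^p$, $g=|v|^p$ bounds the last integral by $J(u)^{1/2}J(v)^{1/2}$, which yields the first estimate in \eqref{R3}, and the second follows from \eqref{J1}. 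For weak sequential continuity of $J'$, I would first reduce, using the uniform bound \eqref{R3} and a density argument, to proving $\langle J'(u_n),\varphi\rangle\to\langle J'(u),\varphi\rangle$ for $\varphi\in C_c^\infty(\R^N)$ whenever $u_n\rightharpoonup u$ in $H^1$. Fixing such a $\varphi$ with support in a ball $B_R$, the Rellich theorem (here the \emph{strict} inequality $p<\frac{N+\alpha}{N-2}$ is used) gives $u_n\to u$ in $L^{\frac{2Np}{N+\alpha}}(B_R)$, so that $|u_n|^{p-2}u_n\varphi\to|u|^{p-2}u\varphi$ strongly in $L^{\frac{2N}{N+\alpha}}(\R^N)$; meanwhile $|u_n|^p\rightharpoonup|u|^p$ in $L^{\frac{2N}{N+\alpha}}$, and since $I_\alpha\ast(\cdot):L^{\frac{2N}{N+\alpha}}\to L^{\frac{2N}{N-\alpha}}$ is bounded and linear, $I_\alpha\ast|u_n|^p\rightharpoonup I_\alpha\ast|u|^p$ in $L^{\frac{2N}{N-\alpha}}$. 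Pairing the weakly convergent and the strongly convergent factor (which lie in conjugate Lebesgue spaces) lets one pass to the limit in $\langle J'(u_n),\varphi\rangle$.

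The elementary inequalities in \textit{(iii)} and the continuity estimate in \textit{(i)} are routine. The step that needs genuine care is the weak sequential continuity of $J'$: one must localize the test function in order to recover, by compactness, strong convergence of $|u_n|^{p-2}u_n\varphi$, and then pair it against precisely the Hardy--Littlewood--Sobolev-dual space $L^{\frac{2N}{N-\alpha}}$ in which the nonlocal factor $I_\alpha\ast|u_n|^p$ converges weakly. This is also the only point where the strict upper bound $p<\frac{N+\alpha}{N-2}$ (rather than $\le$) is invoked.
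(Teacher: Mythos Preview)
Your proof is correct and rests on the same analytic ingredients as the paper (Hardy--Littlewood--Sobolev, the Sobolev embedding, the semigroup identity $I_\alpha=I_{\alpha/2}\ast I_{\alpha/2}$, and Rellich compactness), but the organisation differs in a few places worth noting. For convexity the paper simply quotes that $\|\cdot\|_{\mathcal Q^{\alpha,p}}$ is a norm (Proposition~2.1 in \cite{MMVS}) and then uses $J=\|\cdot\|_{\mathcal Q^{\alpha,p}}^{2p}$; your direct argument via pointwise convexity of $t\mapsto|t|^p$, positivity of the kernel, and the triangle inequality in $L^2$ is self-contained and gives the slightly stronger statement that $J^{1/2}$ is convex. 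For weak lower semicontinuity the paper uses Fatou's lemma, whereas you invoke the general fact that a convex continuous functional is weakly sequentially lsc; both are fine and your route makes the convexity do double duty. The main structural difference is in the weak sequential continuity of $J'$: the paper works with an arbitrary $v\in H^1$, cuts off at radius $R$, and splits into the cases $p\ge2$ and $1<p<2$ using two different pointwise inequalities for $\big||a|^{p-2}a-|b|^{p-2}b\big|$; your density reduction to $\varphi\in C_c^\infty$, followed by pairing the strong $L^{\frac{2N}{N+\alpha}}$-convergence of $|u_n|^{p-2}u_n\varphi$ (from Rellich on $\mathrm{supp}\,\varphi$) against the weak $L^{\frac{2N}{N-\alpha}}$-convergence of $I_\alpha\ast|u_n|^p$, avoids the case distinction entirely. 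One small point to make explicit: the weak convergence $|u_n|^p\rightharpoonup|u|^p$ in $L^{\frac{2N}{N+\alpha}}$ relies on a.e.\ convergence along a subsequence, so strictly speaking you pass to the limit along sub-subsequences and then use uniqueness of the limit to conclude for the full sequence; this is routine but worth a sentence.
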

 \begin{proof} $(i)$ By \eqref{J1}, $J$ is is well defined. Let $u_n \to u$ in $H^1(\R^N)$,
 	then by the Hardy--Littlewood--Sobolev inequality and the elementary inequality $\big||a|^p-|b|^p\big|\leq |a-b|^p$,
 	$$\left|J(u_n)-J(u)\right|\leq |u_n-u|_{\frac{2Np}{N+\alpha}}|u_n|_{\frac{2Np}{N+\alpha}}+|u_n-u|_{\frac{2Np}{N+\alpha}}|u|_{\frac{2Np}{N+\alpha}}\to0,$$
 	as $n\to\infty$. Thus $J$ is continuous.
 	
 	Now let $u_n \rightharpoonup u$ in $H^1(\R^N)$. We
 	can assume (up to a subsequence) that $u_n \to u$ a.e. in $\R^N$.
 	By Fatou's Lemma,
 	$$J(u)=\int\int\lim\limits_{n\to\infty}\frac{|u_n(x)|^{p}|u_n(y)|^{p}}{|x-y|^{N-\alpha}}dxdy
 	\leq \liminf\limits_{n\to\infty}J(u_n)$$
 	Thus $J$ is weakly sequentially lower semi-continuous.
 	
 	$(ii)$ For any $u, v \in H^1(\R^N)$
 	\begin{equation*}
 	\langle J'(u),v\rangle = 2p\int\left(I_\alpha\ast|u|^{p}\right)|u|^{p-2}uvdx
 	\end{equation*}
 	By \eqref{R0}, \eqref{R1} and H\"{o}lder inequality  we have
 	\begin{multline}\label{e3.12}
 	\left|\int(I_\alpha\ast|u|^p)|u|^{p-2}u vdx\right|
 	\leq \left(\int(I_\alpha\ast|u|^p)|u|^{p}dx\right)^{\frac{1}{2}}\left(\int\left|I_{\frac{\alpha}{2}}\ast(|u|^{p-2}u v)\right|^2dx\right)^{\frac{1}{2}}\\
 	\leq \left(\int(I_\alpha\ast|u|^p)|u|^{p}dx\right)^{\frac{1}{2}} \left(\int\left| I_{\frac{\alpha}{2}}\ast |u|^p\right|^{2-\frac{2}{p}}\cdot\left|I_{\frac{\alpha}{2}}\ast|v|^p\right|^{\frac{2}{p}}\right)^{\frac 12}\\
	\leq \left(\int(I_\alpha\ast|u|^p)|u|^{p}dx\right)^{\frac{1}{2}}\left[\int\left|I_{\frac{\alpha}{2}}\ast|u|^p\right|^2dx\right]^{\frac{1}{2}-\frac{1}{2p}}
 	\left[\int\left|I_{\frac{\alpha}{2}}\ast|v|^p\right|^2dx\right]^{\frac{1}{2p}}\\
 	=\left(\int(I_\alpha\ast|u|^p)|u|^{p}dx\right)^{\frac{1}{2}}\left[\int(I_\alpha\ast|u|^p)|u|^pdx\right]^{\frac{1}{2}-\frac{1}{2p}}
 	\left[\int(I_\alpha\ast|v|^p)|v|^pdx\right]^{\frac{1}{2p}}\\
 	 \leq\left[J(u)\right]^{1-\frac{1}{2p}}
 	\left[J(v)\right]^{\frac{1}{2p}}
  	\end{multline}
 Hence, $J'$ is well defined.  	Let us now  prove that $J'$ is weakly sequentially continuous. Let us first show that if $u_n \rightharpoonup u$ in $H^1(\R^N)$, then
 	\begin{equation}\label{u1}
 	\int(I_\alpha\ast |u_n|^p)|u_n|^{p-2}u_nvdx-\int(I_\alpha\ast |u_n|^p)|u|^{p-2}uvdx\to0~\text{as}~n\to\infty
 	\end{equation}
 Indeed, since $v\in L^{\frac{2Np}{N+\alpha}}(\R^N)$, for any $\varepsilon > 0$ there is $R > 0$ such that $|v|_{L^{\frac{2Np}{N+\alpha}}(\R^N\setminus B_R)}\leq\varepsilon$. Then, by \eqref{R} 
 	 	\begin{multline}\label{u1bis}
 	\left|\int(I_\alpha\ast |u_n|^p)\left(|u_n|^{p-2}u_n-|u|^{p-2}u\right)v\chi_{\R^N\setminus B_R}dx\right| 	\leq \left|\int(I_\alpha\ast |u_n|^p)|u_n|^pdx\right|^{1/2}\cdot \\ \cdot \left|\int(I_\alpha\ast \left(|u_n|^{p-2}u_n-|u|^{p-2}u\right)v\chi_{\R^N\setminus B_R})\left(|u_n|^{p-2}u_n-|u|^{p-2}u\right)v\chi_{\R^N\setminus B_R}dx\right|^{1/2}\\
 	\leq C|u_n|^{1/2}_{\frac{2Np}{N+\alpha}}\cdot \left|\int(I_\alpha\ast \left(|u_n|^{p-2}u_n-|u|^{p-2}u\right)v\chi_{\R^N\setminus B_R})\left(|u_n|^{p-2}u_n-|u|^{p-2}u\right)v\chi_{\R^N\setminus B_R}dx\right|^{1/2}
 	\end{multline}
 	The estimate of the right hand side splits in two different cases - note that  $\{u_n\}$ is bounded in $L^{\frac{2Np}{N+\alpha}}(\R^N)$:
 	\begin{itemize}
 		\item If $p\geq 2$, then we apply the following inequality, which holds for any $a,b\in \R$:
 		$$
 		\left||a|^{p-2}a-|b|^{p-2}b\right|\leq C_p(|a|+|b|)^{p-2}|a-b|
 		$$
 		which yields
 		\begin{multline*}
 		 \left|\int(I_\alpha\ast \left(|u_n|^{p-2}u_n-|u|^{p-2}u\right)v\chi_{\R^N\setminus B_R})\left(|u_n|^{p-2}u_n-|u|^{p-2}u\right)v\chi_{\R^N\setminus B_R}dx\right|^{1/2}\\
 		 \leq C\left| \left(|u_n|^{p-2}u_n-|u|^{p-2}u\right)v\chi_{\R^N\setminus B_R}\right|_{\frac{2N}{N+\alpha}}\leq C_p\left| \left(|u_n|+|u|\right)^{p-2}|u_n-u||v|\chi_{\R^N\setminus B_R} \right|_{\frac{2N}{N+\alpha}}\\
 		 \leq C_p \left| (|u_n|+|u|) \right|^{p-2}_{\frac{2Np}{N+\alpha}}|u_n-u|_{\frac{2Np}{N+\alpha}}|v|_{L^{\frac{2Np}{N+\alpha}}(\R^N\setminus B_R)}\leq C\varepsilon
 		\end{multline*}
 			\item If $1<p < 2$, then we apply the following inequality, which holds for any $a,b\in \R$:
 		$$
 		\left||a|^{p-2}a-|b|^{p-2}b\right|\leq C_p|a-b|^{p-1}
 		$$
 		which yields
 		\begin{multline*}
 		\left|\int(I_\alpha\ast \left(|u_n|^{p-2}u_n-|u|^{p-2}u\right)v\chi_{\R^N\setminus B_R})\left(|u_n|^{p-2}u_n-|u|^{p-2}u\right)v\chi_{\R^N\setminus B_R}dx\right|^{1/2}\\
 		\leq C\left| \left(|u_n|^{p-2}u_n-|u|^{p-2}u\right)v\chi_{\R^N\setminus B_R}\right|_{\frac{2N}{N+\alpha}}\leq C_p\left| |u_n-u|^{p-1}|v|\chi_{\R^N\setminus B_R} \right|_{\frac{2N}{N+\alpha}}\\
 		\leq C_p |u_n-u|^{p-1}_{\frac{2Np}{N+\alpha}}|v|_{L^{\frac{2Np}{N+\alpha}}(\R^N\setminus B_R)}\leq C\varepsilon
 		\end{multline*}
 	\end{itemize}
 	In both cases, we obtain that for any fixed $v$ and for any $\varepsilon>0$ there is $R$ such that the right hand side of \eqref{u1bis} is less then $C\varepsilon$. By the same argument, on the ball $B_R$ we have:
 	\begin{itemize}
 		\item if $p\geq 2$
 		\begin{multline*}
 		\left|\int(I_\alpha\ast \left(|u_n|^{p-2}u_n-|u|^{p-2}u\right)v\chi_{B_R})\left(|u_n|^{p-2}u_n-|u|^{p-2}u\right)v\chi_{B_R}dx\right|^{1/2}\\
 		\leq C_p \left| (|u_n|+|u|) \right|^{p-2}_{\frac{2Np}{N+\alpha}}|u_n-u|_{L^{\frac{2Np}{N+\alpha}}( B_R)}|v|_{\frac{2Np}{N+\alpha}}\leq C\varepsilon
 		 		\end{multline*}
 		 		if $n\geq n_{\varepsilon}$, since $u_n\rightharpoonup u$ in $H^1$ and $p<\frac{N+\alpha}{N-2}$;
 		\item if $1<p < 2$, again,
 		\begin{multline*}
 		\left|\int(I_\alpha\ast \left(|u_n|^{p-2}u_n-|u|^{p-2}u\right)v\chi_{B_R})\left(|u_n|^{p-2}u_n-|u|^{p-2}u\right)v\chi_{B_R}dx\right|^{1/2}\\
 		\leq C_p |u_n-u|^{p-1}_{L^{\frac{2Np}{N+\alpha}}(B_R)}|v|_{\frac{2Np}{N+\alpha}}\leq C\varepsilon
 		\end{multline*}
 		if $n\geq n_{\varepsilon}$, since $u_n\rightharpoonup u$ in $H^1$ and $p<\frac{N+\alpha}{N-2}$.
 	\end{itemize}
 	Combining the above cases yields \eqref{u1}. Now, by \eqref{NHLS} the Riesz potential $I_\alpha$ defines a linear continuous map from $L^{\frac{2N}{N+\alpha}}(\R^N)$ to $L^{\frac{2N}{N-\alpha}}(\R^N)$.
 	Thus $(I_\alpha\ast |u_n|^p)\rightharpoonup (I_\alpha\ast |u|^p)$ in $L^{\frac{2N}{N-\alpha}}(\R^N)$, so that
 	\begin{equation}\label{u2}
 	\int(I_\alpha\ast |u_n|^p)|u|^{p-2}uvdx\to\int(I_\alpha\ast |u|^p)|u|^{p-2}uvdx~\text{as}~n\to\infty.
 	\end{equation}
 	since  $|u|^{p-2}uv\in L^{\frac{2N}{N+\alpha}}(\R^N)$. Combining \eqref{u1} and \eqref{u2} implies that  $J'$ is weakly sequentially continuous.
 	
 	$(iii)$ Obviously, $J$ is even. Proposition 2.1 in \cite{MMVS} proves that $\|u\|_{\mathcal Q^{\alpha, p}}$ is a norm, so that it is a convex functional. Since $J(u)=\|u\|_{\mathcal Q^{\alpha, p}}^{2p}$, it is also convex. By  convexity and the $2p$-homogeneity of $J$, we have
 	$$J(u + v) \leq \frac{1}{2}(J(2u)+J(2v))\leq 2^{2p-1}(J(u)+J(v))$$
 \end{proof}
 
We end stating a version of the nonlocal Br\'{e}zis-Lieb  property, see Lemma 2.4 in \cite{MS1} or the survey \cite{MS2} and references therein.
 \begin{lemma}\label{BLN}
 	Let $N \geq 3$, $0 < \alpha < N$ and $\frac{N+\alpha}{N}<p<\frac{N+\alpha}{N-2}$. Let $\{u_n\}\subset L^{\frac{2Np}{N+\alpha}}(\R^N)$ and $u_n\rightharpoonup u$ in $L^{\frac{2Np}{N+\alpha}}(\R^N)$, then
 	$$\int(I_\alpha\ast|u_n|^p)|u_n|^pdx-\int(I_\alpha\ast|u_n-u|^p)|u_n-u|^pdx\to \int(I_\alpha\ast|u|^p)|u|^pdx$$
 	as $n\to\infty$.
 \end{lemma}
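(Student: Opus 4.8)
The plan is to reduce the statement to a scalar Br\'ezis--Lieb property in $L^q$ and then exploit the bilinearity of the Riesz interaction. Set $q:=\frac{2N}{N+\alpha}$, so that $pq=\frac{2Np}{N+\alpha}$ and $q'=\frac{2N}{N-\alpha}$ is the conjugate exponent of $q$ (here $q>1$ because $\alpha<N$). By the Hardy--Littlewood--Sobolev inequality \eqref{HLS} with $t=r=q$, the bilinear form
\[
B(f,g):=\int_{\R^N}(I_\alpha\ast f)\,g\,dx
\]
is symmetric and bounded on $L^q(\R^N)\times L^q(\R^N)$, with $|B(f,g)|\le C\|f\|_{q}\|g\|_{q}$, and by \eqref{NHLS} the map $f\mapsto I_\alpha\ast f$ sends $L^q(\R^N)$ continuously into $L^{q'}(\R^N)=(L^q(\R^N))^{\ast}$. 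In this language the assertion to be proved is $B(|u_n|^p,|u_n|^p)-B(|u_n-u|^p,|u_n-u|^p)\to B(|u|^p,|u|^p)$. Note that $\{u_n\}$ is bounded in $L^{pq}(\R^N)$; as is implicit in \cite{MS1} and automatic in our applications (where $\{u_n\}$ is bounded in $H^1(\R^N)$), we shall also use that $u_n\to u$ a.e.\ in $\R^N$.

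The main step will be to show
\[
|u_n|^p-|u_n-u|^p\longrightarrow |u|^p\qquad\text{strongly in }L^q(\R^N).
\]
For this I would argue in the now-classical way for the generalized Br\'ezis--Lieb lemma: starting from the elementary bound $\bigl|\,|a+b|^p-|a|^p\,\bigr|\le\varepsilon|a|^p+C_\varepsilon|b|^p$, applied with $a=u_n-u$ and $b=u$, one obtains $\bigl|\,|u_n|^p-|u_n-u|^p-|u|^p\,\bigr|\le\varepsilon|u_n-u|^p+(C_\varepsilon+1)|u|^p$. Then $W_{n,\varepsilon}:=\bigl(\bigl|\,|u_n|^p-|u_n-u|^p-|u|^p\,\bigr|-\varepsilon|u_n-u|^p\bigr)^{+}$ satisfies $0\le W_{n,\varepsilon}\le(C_\varepsilon+1)|u|^p\in L^q(\R^N)$ and $W_{n,\varepsilon}\to0$ a.e., so $\|W_{n,\varepsilon}\|_{q}\to0$ by dominated convergence; since $\|u_n-u\|_{pq}$ is bounded, letting $\varepsilon\to0$ after $n\to\infty$ yields the claim. (This is precisely the generalized Br\'ezis--Lieb lemma and could also simply be quoted.)

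To conclude, write $v_n:=|u_n-u|^p$ and $\phi_n:=|u_n|^p-|u_n-u|^p$, both bounded in $L^q(\R^N)$, with $\phi_n\to|u|^p$ in $L^q$ by the previous step and $v_n\rightharpoonup0$ in $L^q$ (since $v_n\to0$ a.e.\ and $\{v_n\}$ is $L^q$-bounded with $q>1$). Bilinearity and symmetry of $B$ give
\[
B(|u_n|^p,|u_n|^p)-B(v_n,v_n)=2B(v_n,\phi_n)+B(\phi_n,\phi_n).
\]
Here $B(\phi_n,\phi_n)\to B(|u|^p,|u|^p)$ by continuity of $B$ and strong convergence of $\phi_n$; and $B(v_n,\phi_n)=B(v_n,|u|^p)+B(v_n,\phi_n-|u|^p)$, where the second term is $O\bigl(\|v_n\|_q\|\phi_n-|u|^p\|_q\bigr)\to0$ and $B(v_n,|u|^p)=\int_{\R^N}(I_\alpha\ast|u|^p)\,v_n\,dx\to0$ because $v_n\rightharpoonup0$ in $L^q$ while $I_\alpha\ast|u|^p\in L^{q'}$. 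Adding up gives the claim. I expect the only genuinely delicate point to be the main step, namely promoting the a.e.\ limit $|u_n|^p-|u_n-u|^p\to|u|^p$ to convergence in $L^q$ using only the uniform $L^q$-bound; the rest is bilinear bookkeeping together with the continuity and mapping properties of $I_\alpha$ furnished by the Hardy--Littlewood--Sobolev inequality.
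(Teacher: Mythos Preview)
Your proof is correct and follows essentially the same route as the paper's (the paper itself only cites \cite{MS1,MS2} for the proof, and the argument there is exactly yours): establish $|u_n|^p-|u_n-u|^p\to|u|^p$ strongly in $L^{2N/(N+\alpha)}$ via the scalar Br\'ezis--Lieb lemma, then expand the symmetric Riesz bilinear form and kill the cross term using $|u_n-u|^p\rightharpoonup 0$ paired against $I_\alpha\ast|u|^p\in L^{2N/(N-\alpha)}$. Your explicit remark that a.e.\ convergence of $u_n$ is needed (not stated in the lemma but implicit in the cited reference and automatic in all applications in the paper) is a welcome clarification.
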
 

  In the following subsections we will give the details of the functional framework, which depends on the two different cases, $\lambda$ in the spectral gap $(a,b)$, or $\lambda$ in the right borderline point of the spectrum, that is $\lambda=a$.
 
\subsection{The case $\lambda \in(a,b)$}
Let $(a,b)$ denote a spectral gap as defined in assumption $(V2)$. For any $\lambda\in[a,b]$, let $S_\lambda := -\Delta + V-\lambda $. Under condition $(V_1)$, the operator $S_\lambda$ is self-adjoint in $L^2(\R^N)$ with domain $D(S_\lambda) = H^2(\R^N)$, and the spectrum $\sigma(S_\lambda)$ is purely absolutely continuous and bounded from below.
Let $\big(P_{\lambda,\nu} : L^2(\R^N) \to L^2(\R^N)\big)_{\nu\in\R}$ denote the spectral family of $S_\lambda$.
Setting $L^-_\lambda:= P_{\lambda,0}L^2(\R^N)$ and $L^+_\lambda := (Id - P_{\lambda,0})L^2(\R^N)$
we have the decomposition $L^2(\R^N)= L^-_\lambda \oplus L^+_\lambda$, where
 $L^-_\lambda\subset H^2(\R^N)$ since the spectrum of $S_\lambda$ is bounded from below.

 Let $E_\lambda$ be the completion of $D(\sqrt{|S_\lambda|})=H^1(\R^N )$ with respect to the norm
$$\|u\|_{E_\lambda} :=\left|\sqrt{|S_\lambda|}u\right|_2= \left(\int_{-\infty}^{+\infty}|\nu|d( P_{\lambda,\nu} u,u)_2\right)^{\frac{1}{2}},$$
where $|S_\lambda|$ is the absolute value of $S_\lambda$, such that
\begin{equation*}
|S_\lambda|u =
\left\{
\begin{array}{ll}
\aligned
S_\lambda u~~~~~&\text{if}~u \in D(S_\lambda) \cap L^+_\lambda; \\
-S_\lambda u~~~~~&\text{if}~u \in D(S_\lambda) \cap L^-_\lambda.
\endaligned
\end{array}
\right.
\end{equation*}
Clearly $E_\lambda$ is a Hilbert space with inner product
$$( u,v)_{E_\lambda} = \big(\sqrt{|S_\lambda|}u, \sqrt{|S_\lambda|}v\big)_2.$$
$E_\lambda$ can be  orthogonally decomposed as $E_\lambda = E^-_\lambda \oplus E^+_\lambda$, according to the
decomposition of $\sigma(S_\lambda)$. We shall write $u = u^- +u^+$ with $u^\pm \in E^\pm_\lambda$ for $u \in E_\lambda$, and
\begin{equation*}
\aligned
&\|u^+\|^2_{E_\lambda}=(S_\lambda u^+,u^+)_2=\int|\nabla u^+|^2 + (V (x)-\lambda)|u^+|^2dx, \\
&\|u^-\|^2_{E_\lambda}=-(S_\lambda u^-,u^-)_2= -\int|\nabla u^-|^2 + (V (x)-\lambda)|u^-|^2dx.
 \endaligned
\end{equation*}

For brevity, we denote $S_0=S$, $P_{0,\nu}=P_\nu$, $E_0=E$.
Note that, by assumptions $(V_1)-(V_2)$, $E = H^1(\R^N)$ and $\|\cdot\|_E$ is equivalent
to the usual norm of $H^1(\R^N)$.

Let $Q_\lambda: E_\lambda\to \R$ be the quadratic form
$$Q_\lambda(u) := \int|\nabla u|^2 + (V(x)-\lambda)u^2dx.$$
Then
$$Q_\lambda(u)=(S_\lambda u,u)_2=\|u^+\|^2_{E_\lambda}-\|u^-\|^2_{E_\lambda}\ ,$$
and $Q_0$ is negative definite on $E^-$ and positive definite on $E^+$ respectively, that is,
\begin{equation*}
Q_0(u^-) \leq -\alpha_0\|u^-\|^2_{H^1}, \quad Q_0(u^+) \geq \beta_0\|u^+\|^2_{H^1}
\end{equation*}
for all $u^- \in E^-$ and $u^+\in E^+$. Moreover, $$Q_0(u)=Q_0(u^- + u^+)= Q_0(u^-) + Q_0(u^+),$$
and the borderline points of the spectral gap  $(a, b)$ can be characterized as  
\begin{equation*}
a = \sup\limits_{u^-\in E^-, |u^-|_2=1}Q_0(u^-) < 0 < \inf\limits_{u^+\in E^+, |u^+|_2=1}Q_0(z) = b.
\end{equation*}
The same spectral splitting holds for any $\lambda\in (a, b)$. This is made precise by the following lemma.
\begin{proposition}\label{lm1} (\cite[Lemma 2]{T})
Let the spectral gap $(a,b)$ be given as in assumption $(V_2)$. 
Let $\lambda\in (a, b)$. Then
\begin{equation*}
Q_\lambda(u^-)=-\|u^-\|^2_{E_\lambda} \leq -\alpha_\lambda\|u^-\|^2_{H^1}, \quad Q_\lambda(u^+)=\|u^+\|^2_{E_\lambda} \geq \beta_\lambda\|u^+\|^2_{H^1}
\end{equation*}
for all $u^-\in E^-_\lambda$ and $u^+\in E^+_\lambda$, where
\begin{equation*}
\alpha_\lambda:=
\left\{
\begin{array}{ll}
\aligned
&\alpha_0\left(1 - \frac{\lambda}{a}\right)~~&\text{if}~~\lambda\leq0, \\
&\alpha_0~~&\text{if}~~\lambda>0,
\endaligned
\end{array}
\right.
\end{equation*}
\begin{equation*}
\beta_\lambda:=
\left\{
\begin{array}{ll}
\aligned
&\beta_0\left(1 - \frac{\lambda}{b}\right)~~&\text{if}~~\lambda>0, \\
& \beta_0 ~~&\text{if}~~\lambda\leq0.
\endaligned
\end{array}
\right.
\end{equation*}
Consequently,
\begin{equation*}\label{e1.10}
\|u\|^2_{E_\lambda}=Q_\lambda(u^+) - Q_\lambda(u^-) \geq \frac{1}{2} \min\{\alpha_\lambda, \beta_\lambda\}\|u\|^2_{H^1}.
\end{equation*}
\end{proposition}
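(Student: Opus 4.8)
The plan is to reduce the statement to the variational characterization of $a$ and $b$ already displayed above, so that essentially no spectral theory beyond what has been recalled is needed. The starting point is the observation that the spectral decomposition does not really move as $\lambda$ varies in the gap: since $S_\lambda = S - \lambda\,\mathrm{Id}$ with $\lambda$ in the resolvent set of $S$, the spectral subspaces of $S_\lambda$ associated with $(-\infty,0)$ and $(0,\infty)$ are precisely the spectral subspaces of $S$ associated with $(-\infty,\lambda)$ and $(\lambda,\infty)$; because $(a,b)$ is free of spectrum and contains both $0$ and $\lambda$, the latter coincide with $L^-_0$ and $L^+_0$. Hence $E^-_\lambda = E^-$ and $E^+_\lambda = E^+$ as subspaces of $H^1(\R^N)$, and every $u\in H^1(\R^N)$ has a single splitting $u=u^-+u^+$, $u^\pm\in E^\pm$, that serves for all $\lambda\in(a,b)$ at once.

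Next I would record, by the $2$-homogeneity of $Q_0$ applied to the characterization of $a$ and $b$, the two pointwise bounds $Q_0(u^+)\ge b\,|u^+|_2^2$ for $u^+\in E^+$ and $-Q_0(u^-)\ge (-a)\,|u^-|_2^2$ for $u^-\in E^-$, together with the inequalities $Q_0(u^+)\ge\beta_0\|u^+\|_{H^1}^2$ and $-Q_0(u^-)\ge\alpha_0\|u^-\|_{H^1}^2$ already available. Then, writing $Q_\lambda(w)=Q_0(w)-\lambda|w|_2^2$, the estimate splits into the sign cases of $\lambda$. For $u^+\in E^+$: when $\lambda\le 0$ the term $-\lambda|u^+|_2^2$ is nonnegative, so $Q_\lambda(u^+)\ge Q_0(u^+)\ge\beta_0\|u^+\|_{H^1}^2$; when $0<\lambda<b$ one inserts $|u^+|_2^2\le b^{-1}Q_0(u^+)$ to get $Q_\lambda(u^+)\ge\big(1-\tfrac{\lambda}{b}\big)Q_0(u^+)\ge\beta_0\big(1-\tfrac{\lambda}{b}\big)\|u^+\|_{H^1}^2$, where $1-\tfrac{\lambda}{b}>0$. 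Symmetrically, for $u^-\in E^-$: when $\lambda\ge 0$ the term $\lambda|u^-|_2^2$ is nonnegative in $-Q_\lambda(u^-)=-Q_0(u^-)+\lambda|u^-|_2^2$, giving $-Q_\lambda(u^-)\ge\alpha_0\|u^-\|_{H^1}^2$; when $a<\lambda<0$ one inserts $|u^-|_2^2\le(-a)^{-1}(-Q_0(u^-))$ and uses $\lambda<0$ to get $-Q_\lambda(u^-)\ge\big(1-\tfrac{\lambda}{a}\big)(-Q_0(u^-))\ge\alpha_0\big(1-\tfrac{\lambda}{a}\big)\|u^-\|_{H^1}^2$, with $1-\tfrac{\lambda}{a}>0$ because $\lambda>a$. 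Recalling that $\|u^\pm\|^2_{E_\lambda}=\pm Q_\lambda(u^\pm)$ by construction of $|S_\lambda|$, these are exactly the claimed identities with the stated $\alpha_\lambda$, $\beta_\lambda$.

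The final inequality then follows by orthogonality: $\|u\|^2_{E_\lambda}=\|u^+\|^2_{E_\lambda}+\|u^-\|^2_{E_\lambda}=Q_\lambda(u^+)-Q_\lambda(u^-)\ge\beta_\lambda\|u^+\|_{H^1}^2+\alpha_\lambda\|u^-\|_{H^1}^2\ge\min\{\alpha_\lambda,\beta_\lambda\}\big(\|u^+\|_{H^1}^2+\|u^-\|_{H^1}^2\big)$, and the elementary bound $\|u^+\|_{H^1}^2+\|u^-\|_{H^1}^2\ge\tfrac12\|u^++u^-\|_{H^1}^2=\tfrac12\|u\|_{H^1}^2$ closes the argument.

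I do not expect a genuine obstacle here — the proposition is essentially a bookkeeping consequence of the spectral theorem and is quoted from \cite{T} — but the one point deserving care is the identification $E^\pm_\lambda=E^\pm$, i.e.\ that the projection $P_{\lambda,0}$ is independent of $\lambda\in(a,b)$; everything else is the case analysis on the sign of $\lambda$ together with the two homogeneity-type bounds on $Q_0$.
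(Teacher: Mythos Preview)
Your proof is correct; the paper does not actually give its own argument for this proposition but simply quotes it from \cite[Lemma~2]{T}, so there is nothing to compare against beyond noting that your approach --- identifying $E^\pm_\lambda=E^\pm$ via the gap, writing $Q_\lambda=Q_0-\lambda|\cdot|_2^2$, and doing the sign-of-$\lambda$ case split using the extremal characterizations of $a$ and $b$ --- is the standard one and almost certainly coincides with the cited source.
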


As a consequence of Proposition \ref{lm1},  for any $\lambda\in(a,b)$ it holds $E_\lambda = E= H^1(\R^N)$.

\subsection{The case $\lambda=a$}
The case $\lambda=a$ requires a deeper analysis. In this situation $0\in\sigma(S_a)$ is a right boundary point of $\sigma(S_a)$, where $S_a=-\Delta+V-a\,$.
Since the spectrum of $S_a$ restricted to $L^+_a$ is contained in $[b-a, +\infty)$, which is bounded
away from $0$, the norm $\|\cdot\|_{E_a}$ is equivalent to the $H^1-$norm on $E^+_a$. However, $0$ is contained in the spectrum of $S_a$ restricted to $L^-_a$, hence the norm $\|\cdot\|_{E_a}$ is weaker than the $H^1$-norm on $E^-_a$. Moreover, $H^1(\R^N)\cap L^-_a=L^-_a$ is not complete with
respect to $\|\cdot\|_{E_a}$: indeed, since $0 \in \sigma(S_a)$ is a continuous spectrum point, there is a sequence $\{u_n\}\subset D(S_a)$ such that $|u_n|_2 = 1$ and $S_a u_n \to 0$, so that $\|u_n\|_{E_a} \to 0$.

Furthermore,  when $\lambda=a$, $J(u)$ is no more  well-defined on $E_a$.
To overcome these difficulties, we are going to define a new space $E_{HL}$ such that there are continuous
embeddings $H^1(\R^N )\subset E_{HL} \subset E_a$. Let us recall the definition of the Coulomb norm \eqref{Qap}
$$\|u\|_{\mathcal Q^{\alpha,p}}:=\left(\int \left(I_{\alpha}\ast |u|^p\right)|u|^p\right)^{\frac{1}{2p}}=\left(\int \left(I_{\frac \alpha 2}\ast |u|^p\right)^2\right)^{\frac{1}{2p}}$$
$\|\cdot\|_{\mathcal Q^{\alpha, p}}$ is a norm on $L^-_a\subset H^1(\R^N)$. Further, for any $u \in E^-_a$, we have
$$0 \leq \|u\|_{E_a}^2 =-\int (|\nabla u|^2 + (V (x)-a)u^2) dx$$
which implies
\begin{equation*}\label{G}
|\nabla u|_2 \leq C|u|_2,~~\forall u \in E^-_a.
\end{equation*}
Then, by Hardy-Littlewood-Sobolev inequality and Gagliardo-Nirenberg inequality, we have
\begin{equation*}\label{N}
\|u\|_{\mathcal Q^{\alpha, p}}\leq C|u|_{\frac{2Np}{N+\alpha}} \leq C|\nabla u|_2^{\frac{Np-N-\alpha}{2p}}|u|_2^{\frac{2p-Np+N+\alpha}{2p}}
\leq C|u|_2,~~\forall u \in E^-_a.
\end{equation*}
Let us define
$$ \|u\|_{E_\mathcal Q}:=\|u\|_{E_a}+\|u\|_{\mathcal Q^{\alpha, p}}$$
where, for the sake of brevity, we omit the indexes $a,\alpha, p$. Let $E_{\mathcal Q}^-$ be the completion of $L^-_a$ with respect to $\|\cdot\|_{E_\mathcal Q}$ and set $E_{\mathcal Q}:= E_{\mathcal Q}^- \oplus E^+_a$
Then $E_{\mathcal Q}$ is the completion of $H^1(\R^N)$ with respect to $ \|\cdot\|_{E_\mathcal Q}$ due to $E^+_a\sim L^+_a$.
Clearly $(E_{\mathcal Q}, \|\cdot\|_{E_\mathcal Q})$ is a Banach space and  $J(u)$ is well-defined on $E_{\mathcal Q}$.

\begin{remark}\label{remarkMMV}
	Let us stress the main difference between the local setting proposed in \cite{BD} and our choice. Mimicking \cite{BD}, we could choose as new space $E_{\frac{2Np}{N+\alpha}}$, defined as the completion of $H^1(\R^N)$ with respect to the norm 
	$$ \|\cdot\|_{E_a}+|\cdot|_{\frac{2Np}{N+\alpha}}
	$$ However, although the nonlinear term $(I_\alpha\ast |u|^p)|u|^{p}$ is well-defined in $E_{\frac{2Np}{N+\alpha}}$ by Hardy-Littlewood-Sobolev inequality, we are not able to prove that the (PS) sequences $\{u_n\}$ are bounded in $E_{\frac{2Np}{N+\alpha}}$. The main reason is that $\int(I_\alpha\ast |\cdot|^p)|\cdot|^{p}$ cannot control any Lebesgue norm $|\cdot|_{L^\mu(\R^N)}$, and we cannot count on $\|\cdot\|_{E_a}$ either, because the norm $\|\cdot\|_{E_a}$ is weaker than $\|\cdot\|_{H^1}$ in the singular case.
We overcome this difficulty taking into account the nonlocal nature of our  problem in the construction of the space $E_{\mathcal Q}$, which turns out to be embedded into $H_{loc}^1$, as we will prove in Lemma \ref{lm4.8} below.
 \end{remark}

Let us prove some basic properties of the space $E_\mathcal Q$.  
\begin{lemma}\label{norm}
	$H^1(\R^N) \subset E_{\mathcal Q} \subset E_a$ and all norms
	$\|\cdot\|_{E_a}$, $\|\cdot\|_{H^1}$, $\|\cdot\|_{E_\mathcal Q}$ are equivalent on $E^+_a$.
\end{lemma}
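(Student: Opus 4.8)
The plan is to establish the two set inclusions together with the three displayed norm equivalences on $E^+_a$, proceeding in the natural order: first the easy inclusion $E_{\mathcal Q}\subset E_a$, then the equivalence of norms on $E^+_a$, and finally the inclusion $H^1(\R^N)\subset E_{\mathcal Q}$.

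First I would check $E_{\mathcal Q}\subset E_a$. This is immediate from the very definition $\|u\|_{E_\mathcal Q}=\|u\|_{E_a}+\|u\|_{\mathcal Q^{\alpha,p}}\geq\|u\|_{E_a}$, so the identity map $E_{\mathcal Q}\to E_a$ is continuous; since $E_{\mathcal Q}$ is by construction the completion of $H^1(\R^N)$ (equivalently, $E^-_{\mathcal Q}$ is the completion of $L^-_a$ and $E^+_{\mathcal Q}=E^+_a$) with respect to a stronger norm, and $E_a$ is the completion with respect to the weaker one, the inclusion holds. Next, for the norm equivalences on $E^+_a$: as already observed in the text, the spectrum of $S_a$ restricted to $L^+_a$ lies in $[b-a,+\infty)$, hence $\|u\|^2_{E_a}=(S_a u,u)_2\geq (b-a)|u|_2^2$ and also $\|u\|^2_{E_a}=\int|\nabla u|^2+(V(x)-a)u^2\,dx\leq C\|u\|^2_{H^1}$, while combining the lower bound on $(S_a u,u)_2$ with $\int|\nabla u|^2+(V-a)u^2\,dx$ and the boundedness of $V$ gives $\|u\|_{H^1}\leq C\|u\|_{E_a}$ on $E^+_a$; so $\|\cdot\|_{E_a}\sim\|\cdot\|_{H^1}$ there. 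Then by Hardy--Littlewood--Sobolev \eqref{HLS} followed by Sobolev embedding, $\|u\|_{\mathcal Q^{\alpha,p}}\leq C|u|_{\frac{2Np}{N+\alpha}}\leq C\|u\|_{H^1}\sim C\|u\|_{E_a}$ (using $\frac{N+\alpha}{N}<p<\frac{N+\alpha}{N-2}$ so that $\frac{2Np}{N+\alpha}\in(2,\frac{2N}{N-2})$), hence $\|u\|_{E_\mathcal Q}=\|u\|_{E_a}+\|u\|_{\mathcal Q^{\alpha,p}}\leq C\|u\|_{E_a}$, and trivially $\|u\|_{E_a}\leq\|u\|_{E_\mathcal Q}$; this closes the chain of equivalences on $E^+_a$.

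It remains to prove $H^1(\R^N)\subset E_{\mathcal Q}$, i.e.\ that the $H^1$-norm controls $\|\cdot\|_{E_\mathcal Q}$ on $H^1(\R^N)$. Writing $u=u^-+u^+$ with $u^\pm\in L^\pm_a\cap H^1$, for the $E^+_a$-part we are done by the previous paragraph. For the $E^-_a$-part, $\|u^-\|_{E_a}\leq\|u\|_{E_a}\leq C\|u\|_{H^1}$ since $-S_a$ is a bounded quadratic form, and $\|u^-\|_{\mathcal Q^{\alpha,p}}\leq C|u^-|_{\frac{2Np}{N+\alpha}}\leq C\|u^-\|_{H^1}\leq C\|u\|_{H^1}$ by HLS and Sobolev again (the spectral projection $P_{a,0}$ being bounded on $L^2$ and mapping $H^1$ to $H^1$ since $V$ is bounded, so $\|u^-\|_{H^1}\leq C\|u\|_{H^1}$). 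Thus $\|u\|_{E_\mathcal Q}\leq C\|u\|_{H^1}$, giving the continuous embedding $H^1(\R^N)\hookrightarrow E_{\mathcal Q}$.

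The only mildly delicate point, and the one I would state most carefully, is the boundedness of the spectral projection $P_{a,0}:H^1(\R^N)\to H^1(\R^N)$, which is what legitimates splitting the $H^1$-norm estimate into the $u^+$ and $u^-$ contributions and using Sobolev embedding on $u^-$ separately. This follows because $L^-_a\subset H^2(\R^N)$ (the negative spectral subspace is contained in the domain of $S_a$, the spectrum being bounded below) and $S_a$ commutes with $P_{a,0}$, so on $L^2$ one has the elliptic estimate $\|u^-\|_{H^2}\leq C(|S_a u^-|_2+|u^-|_2)\leq C(|S_a u|_2+|u|_2)$ when $u\in H^2$, and one extends to $H^1$ by the interpolation/density argument; alternatively one invokes that $P_{a,0}$, being a spectral projector of a second-order elliptic operator with bounded periodic coefficients, is bounded on all the relevant Sobolev scales. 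Everything else is a direct application of Hardy--Littlewood--Sobolev, the Sobolev inequality, and the spectral description of $S_a$ on $L^\pm_a$. \qed
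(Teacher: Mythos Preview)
Your argument is correct and follows the same ingredients as the paper (HLS, Sobolev, and the spectral gap on $E^+_a$), but you take an unnecessary detour in the final step. For the inclusion $H^1(\R^N)\subset E_{\mathcal Q}$ the paper does \emph{not} decompose $u=u^-+u^+$: it simply observes that for every $u\in H^1(\R^N)$ one has $\|u\|_{\mathcal Q^{\alpha,p}}\leq C|u|_{\frac{2Np}{N+\alpha}}\leq C\|u\|_{H^1}$ and $\|u\|_{E_a}\leq C\|u\|_{H^1}$ directly, hence $\|u\|_{E_{\mathcal Q}}=\|u\|_{E_a}+\|u\|_{\mathcal Q^{\alpha,p}}\leq C\|u\|_{H^1}$. (The second bound follows from $|S_a|\leq S_a+2\theta$, $\theta=-\inf\sigma(S_a)$, so $(|S_a|u,u)\leq Q_a(u)+2\theta|u|_2^2\leq C\|u\|_{H^1}^2$ --- you essentially note this yourself when you write ``since $-S_a$ is a bounded quadratic form''.) Consequently, the point you flag as ``mildly delicate'' --- the $H^1$-boundedness of the spectral projection $P_{a,0}$ --- never needs to be addressed. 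Your splitting into $u^\pm$ and the subsequent discussion of $P_{a,0}$ can be deleted with no loss.
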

\begin{proof}  The embedding $E_{\mathcal Q} \subset E_a$ is obvious.
	By Sobolev embedding and Hardy-Littlewood-Sobolev inequality, for any $u\in H^1(\R^N)$, we have
	$$\|u\|_{H^1}\geq C|u|_{\frac{2Np}{N+\alpha}}\geq C\|u\|_{\mathcal Q^{\alpha,p}},$$
	On the other hand, we have $\|u\|_{H^1}\geq C\|u\|_{E_a}.$ Therefore
	$\|u\|_{H^1}\geq C\|u\|_{E_\mathcal Q}.$
	Thus $H^1(\R^N) \subset E_{\mathcal Q}$.\\For $u\in E^+_a$, we know that $\|\cdot\|_{H^1}$ and $\|\cdot\|_{E_a}$ are equivalent. Thus
	$$\|u\|_{H^1}\leq C\|u\|_{E_a}\leq C\|u\|_{E_\mathcal Q}.$$
	This completes the proof.
\end{proof}
We briefly recall that a norm  $\|\cdot\|_X$ on a linear space is said {\it{uniformly convex }} if for any $\varepsilon> 0$ there is a $\delta_\varepsilon>0$ such that for any $x,y\in X$ with $\|x\|_X=\|y\|_X=1$ and	$\|x-y\|_X\geq\varepsilon$, then $\|\frac{x+y}{2}\|_X\leq1-\delta_\varepsilon.$ In \cite{MMVS}, Proposition 2.8, the authors prove the following property:

\begin{lemma}\label{lm4.4}[Proposition 2.8 in \cite{MMVS}]
Let $\alpha \in (0,N)$ and $p>1$. Then	$\|\cdot\|_{\mathcal Q^{\alpha,p}}$ is a uniformly convex norm. 
\end{lemma}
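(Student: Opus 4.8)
The plan is to recover the uniform convexity of $\|\cdot\|_{\mathcal Q^{\alpha,p}}$ from the uniform convexity of $L^2(\R^N)$, exploiting that $\|u\|_{\mathcal Q^{\alpha,p}}^{2p}=\|I_{\frac\alpha2}\ast|u|^p\|_2^2$ is the square of the $L^2$-norm of the \emph{nonnegative} function $W_u:=I_{\frac\alpha2}\ast|u|^p$, together with a quantitative convexity estimate for the scalar map $t\mapsto|t|^p$. The two structural facts I will use repeatedly are: convolution with the positive kernel $I_{\frac\alpha2}$ is order preserving on nonnegative functions, and $\int(I_{\frac\alpha2}\ast f)(I_{\frac\alpha2}\ast g)=\int(I_\alpha\ast f)g\ge0$ for $f,g\ge0$ by \eqref{R0}--\eqref{R1}.

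Fix $u,v$ with $\|u\|_{\mathcal Q^{\alpha,p}}=\|v\|_{\mathcal Q^{\alpha,p}}=1$ and $\|u-v\|_{\mathcal Q^{\alpha,p}}\ge\varepsilon$. Convexity of $t\mapsto|t|^p$ ($p>1$) gives the pointwise defect $D:=\tfrac12(|u|^p+|v|^p)-|\tfrac{u+v}2|^p\ge0$, which is dominated by $\tfrac12(|u|^p+|v|^p)$ and hence has finite Coulomb energy. Writing $X':=W_{(u+v)/2}$, $Y':=\tfrac12(W_u+W_v)$ and $Z':=I_{\frac\alpha2}\ast D_\ast$ for any $0\le D_\ast\le D$, one has $0\le X'+Z'\le Y'$ a.e.; since all three are nonnegative elements of $L^2$, the elementary chain $\|X'\|_2^2+\|Z'\|_2^2\le\|X'+Z'\|_2^2\le\|Y'\|_2^2$ (using $\int X'Z'\ge0$ and monotonicity of the $L^2$-norm on the positive cone), together with $\|Y'\|_2\le\tfrac12(\|W_u\|_2+\|W_v\|_2)=1$, yields
\[
\Big\|\tfrac{u+v}2\Big\|_{\mathcal Q^{\alpha,p}}^{2p}=\|X'\|_2^2\ \le\ 1-\|Z'\|_2^2 .
\]
Thus it suffices to choose $D_\ast\le D$ and to bound $\|Z'\|_2=\|I_{\frac\alpha2}\ast D_\ast\|_2$ from below by a positive power of $\varepsilon$.

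If $p\ge2$, I take $D_\ast:=|\tfrac{u-v}2|^p$; the elementary inequality $\tfrac12(|s+t|^p+|s-t|^p)\ge|s|^p+|t|^p$ (applied with $s=\tfrac{u+v}2$, $t=\tfrac{u-v}2$) shows $D_\ast\le D$, and then $\|Z'\|_2^2=\|\tfrac{u-v}2\|_{\mathcal Q^{\alpha,p}}^{2p}\ge(\varepsilon/2)^{2p}$, so $\|\tfrac{u+v}2\|_{\mathcal Q^{\alpha,p}}^{2p}\le1-(\varepsilon/2)^{2p}$. If $1<p<2$, the modulus of convexity of $t\mapsto|t|^p$ provides $c_p>0$ with $D\ge c_p\,|u-v|^2(|u|+|v|)^{-(2-p)}=:D_\ast$ (convention $0/0=0$). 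Factoring $|u-v|^p=c_p^{-p/2}D_\ast^{p/2}\,(|u|+|v|)^{p(2-p)/2}$ and applying Hölder with exponents $\tfrac2p,\tfrac2{2-p}$ to the measure $|x-y|^{\alpha-N}\,dx\,dy$ gives
\[
\|u-v\|_{\mathcal Q^{\alpha,p}}^{2p}\ \le\ C_p\,\|Z'\|_2^{\,p}\,\big\||u|+|v|\big\|_{\mathcal Q^{\alpha,p}}^{\,p(2-p)} ,
\]
and since $(|u|+|v|)^p\le2^{p-1}(|u|^p+|v|^p)$ pointwise, order preservation of $I_{\frac\alpha2}\ast$ plus the $L^2$-triangle inequality give $\||u|+|v|\|_{\mathcal Q^{\alpha,p}}\le2$; hence $\|Z'\|_2\ge c\,\varepsilon^2$ and $\|\tfrac{u+v}2\|_{\mathcal Q^{\alpha,p}}^{2p}\le1-c^2\varepsilon^4$. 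In either case $\|\tfrac{u+v}2\|_{\mathcal Q^{\alpha,p}}\le(1-\theta\varepsilon^{\kappa})^{1/(2p)}=:1-\delta_\varepsilon<1$ for suitable $\theta,\kappa>0$, which is precisely uniform convexity.

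The case $p\ge2$ is immediate once the order inequality of the second paragraph is in place. The main obstacle is the range $1<p<2$: here Clarkson's first inequality fails, so the defect $D$ only controls $|u-v|^2(|u|+|v|)^{-(2-p)}$ rather than $|u-v|^p$, and recovering a clean lower bound on $\|Z'\|_2$ forces the Hölder splitting above (and the weaker exponent $\varepsilon^4$). Establishing the quantitative pointwise estimate $D\gtrsim|u-v|^2(|u|+|v|)^{-(2-p)}$ and checking that the Hölder factorization is legitimate — i.e. that all the Coulomb energies appearing are finite, which follows since $D_\ast\le D\le\tfrac12(|u|^p+|v|^p)$ and $(|u|+|v|)^p\le2^{p-1}(|u|^p+|v|^p)$ — is the technical heart of the argument.
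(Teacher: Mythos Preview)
The paper does not give its own proof here; it simply attributes the result to Proposition~2.8 of \cite{MMVS}. (A commented-out draft argument in the source handles only $p\ge2$ via Clarkson's first inequality and was removed from the final text.) Your proposal therefore fills in what the paper outsources.

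Your argument is correct. The reduction --- writing $\|u\|_{\mathcal Q^{\alpha,p}}^{2p}=\|I_{\alpha/2}\ast|u|^p\|_2^2$, using that convolution with the positive kernel $I_{\alpha/2}$ is order preserving to obtain $0\le X'+Z'\le Y'$ and hence $\|X'\|_2^2+\|Z'\|_2^2\le\|Y'\|_2^2\le1$, and then bounding $\|Z'\|_2$ from below by a positive power of $\varepsilon$ via the pointwise convexity defect of $t\mapsto|t|^p$ --- is the standard route and is the one taken in \cite{MMVS}. For $p\ge2$, Clarkson's first inequality gives $D\ge\big|\tfrac{u-v}{2}\big|^p$ directly, matching the discarded draft. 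For $1<p<2$, your H\"older factorisation with exponents $\tfrac{2}{p},\tfrac{2}{2-p}$ against the defect bound $D\ge c_p\,|u-v|^2(|u|+|v|)^{-(2-p)}$ is precisely the right substitute; the scalar inequality is standard (homogeneity plus compactness on $\{|a|+|b|=1\}$, or a second-order Taylor estimate), and your finiteness checks via $D_\ast\le\tfrac12(|u|^p+|v|^p)$ and $(|u|+|v|)^p\le2^{p-1}(|u|^p+|v|^p)$ are in order.
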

Consequently, $E_{\mathcal Q}$ is a reflexive Banach space.\\As already said in the introduction, the location of $\lambda$ on the right borderline point of the spectrum prevents the embedding of $E^-_a$ in $H^1$.  Nevertheless, we can recover a partial regularity, stated in the following Lemma.
\begin{lemma}\label{lm4.8} $E_{\mathcal Q}^-$ embeds continuously into $H^1_{loc}(\R^N)$, and hence compactly into
	$L^t_{loc}(\R^N)$ for $2 \leq t < 2^*$. Moreover, $S_au \in L^2$ for $u\in E_{\mathcal Q}^-$, and $E_{\mathcal Q}^-$ embeds continuously into $H^2_{loc}(\R^N)$.
\end{lemma}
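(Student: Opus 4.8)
The plan is to exploit the only two pieces of information we control on an element $u\in E_{\mathcal Q}^-$: that it is a limit in $\|\cdot\|_{E_{\mathcal Q}}=\|\cdot\|_{E_a}+\|\cdot\|_{\mathcal Q^{\alpha,p}}$ of functions in $L_a^-\subset D(S_a)$, and that on $L_a^-$ the norm $\|\cdot\|_{E_a}$ dominates $|\nabla\,\cdot\,|_2$ (the inequality $|\nabla u|_2\le C|u|_2$ displayed just before the Lemma, which says that on $E_a^-$ the gradient is controlled by the $L^2$-norm, while $\|u\|_{E_a}^2=-\int|\nabla u|^2+(V-a)u^2$). First I would take a Cauchy sequence $\{u_n\}\subset L_a^-$ with $u_n\to u$ in $E_{\mathcal Q}$, so $\|u_n\|_{E_a}$ and $\|u_n\|_{\mathcal Q^{\alpha,p}}$ are bounded. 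The key quantitative step is to show that on $L_a^-$ one has a local bound
\begin{equation*}
\|v\|_{H^1(B_R)}\le C_R\big(\|v\|_{E_a}+\|v\|_{\mathcal Q^{\alpha,p}}\big)\qquad\text{for all }v\in L_a^-,
\end{equation*}
with $C_R$ independent of $v$; granting this, $\{u_n\}$ is Cauchy in $H^1(B_R)$ for every $R$, its $H^1_{loc}$-limit coincides with $u$, and the continuous embedding $E_{\mathcal Q}^-\hookrightarrow H^1_{loc}$ follows, together with the compact embedding into $L^t_{loc}$ for $2\le t<2^*$ by Rellich--Kondrachov.

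To prove that local bound, the main obstacle is that $\|v\|_{E_a}$ controls $|\nabla v|_2$ only \emph{up to} the $L^2$-norm of $v$, which is exactly the quantity that can blow up (this is the whole point of $0$ being a continuous-spectrum boundary point). So I must extract an a priori bound on $|v|_{L^2(B_R)}$ from the Coulomb norm. Here I would use the nonlocal term: $\|v\|_{\mathcal Q^{\alpha,p}}^{2p}=\int(I_\alpha\ast|v|^p)|v|^p\,dx$, and for a fixed bounded set $B_R$ one estimates from below $\int_{B_R}\int_{B_R}\frac{|v(x)|^p|v(y)|^p}{|x-y|^{N-\alpha}}\,dx\,dy\ge c_R\Big(\int_{B_R}|v|^p\Big)^2$, because on the bounded region $|x-y|^{N-\alpha}\le(2R)^{N-\alpha}$. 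Hence $|v|_{L^p(B_R)}\le C_R\|v\|_{\mathcal Q^{\alpha,p}}^{p}/\dots$ — more precisely $\int_{B_R}|v|^p\le C_R\|v\|_{\mathcal Q^{\alpha,p}}^{p}$, giving control of $|v|_{L^p(B_R)}$. Combining this with $|\nabla v|_2\le C(|v|_2)$ is still not immediately enough if $p<2$; the clean way around is to argue on all of $\R^N$ via Gagliardo--Nirenberg: from $\|v\|_{\mathcal Q^{\alpha,p}}\le C|v|_{2Np/(N+\alpha)}\le C|\nabla v|_2^{\theta}|v|_2^{1-\theta}$ with $\theta=\frac{Np-N-\alpha}{2p}\in(0,1)$ (the chain displayed before the Lemma, read in the other direction) one does \emph{not} get a bound directly; instead I would interpolate the \emph{other} way, using that $\|v\|_{\mathcal Q^{\alpha,p}}$ and $|\nabla v|_2$ are both finite and $|\nabla v|_2\le C|v|_2$ to get $|v|_2\ge c|v|_{2Np/(N+\alpha)}^{1/\theta}|v|_2^{-(1-\theta)/\theta}$, i.e. $|v|_2\ge c\|v\|_{\mathcal Q^{\alpha,p}}^{1/\theta}$... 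This loop shows the honest route is local: use $\int_{B_R}|v|^p\le C_R\|v\|_{\mathcal Q^{\alpha,p}}^{p}$ to control $|v|_{L^p(B_R)}$, then a cutoff $\chi\in C_c^\infty$ with $\chi\equiv1$ on $B_R$ supported in $B_{2R}$, apply Sobolev/Gagliardo--Nirenberg to $\chi v$ using $|\nabla(\chi v)|_2\le C(|\nabla v|_2+|v|_{L^2(B_{2R})})$ and the already-known global bound $|\nabla v|_2\le C|v|_2$ together with $|v|_{L^2(B_{2R})}$ — which in turn is interpolated between $|v|_{L^p(B_{2R})}$ and $|v|_{L^{2^*}(B_{2R})}\le C|\nabla(\chi' v)|_2$, closing a fixed-point inequality on the single quantity $\|v\|_{H^1(B_{2R})}$. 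Carefully iterating on a chain of shrinking balls, or more simply bootstrapping once, yields the claimed $\|v\|_{H^1(B_R)}\le C_R\big(\|v\|_{E_a}+\|v\|_{\mathcal Q^{\alpha,p}}\big)$.

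For the second part, I would observe that for $u\in E_{\mathcal Q}^-$, approximated by $u_n\in L_a^-\subset D(S_a)$, the sequence $S_au_n=-\Delta u_n+(V-a)u_n$ satisfies $|S_au_n|_2^2=(S_a^2u_n,u_n)_2$ — but more usefully, since $u_n\in L_a^-$, $|S_au_n|_2=\|u_n\|_{E_a}^{\,}\cdot$(something)... the cleanest statement is that on $L_a^-$ we have $S_au_n=-|S_a|u_n$ with $\||S_a|^{1/2}u_n|_2=\|u_n\|_{E_a}$, so $|S_au_n|_2$ need not be bounded by $\|u_n\|_{E_a}$ alone; instead one uses that $\{u_n\}$ is bounded in $H^1_{loc}$ (just proved) hence in $H^1$ on $B_{2R}$, so by interior elliptic regularity for the equation $-\Delta u_n=(a-V)u_n+(\text{the }E_a\text{ data})$ — in fact $S_au_n$ converges in $L^2$ because $\|u_n\|_{E_a}$ Cauchy controls $\||S_a|^{1/2}(u_n-u_m)|_2$, and on the spectral subspace where $\sigma(S_a)\subset(-\infty,0]$ this does \emph{not} give $L^2$-convergence of $S_au_n$; so the right argument is: $u$ solves the Euler--Lagrange equation $-\Delta u+(V-a)u=(I_\alpha\ast|u|^p)|u|^{p-2}u$ (the equation at $\lambda=a$), the right-hand side lies in $L^2_{loc}$ by the $H^1_{loc}$ regularity and Hardy--Littlewood--Sobolev, hence $S_au=-\Delta u+(V-a)u\in L^2_{loc}$, and then interior $L^2$-elliptic regularity (Calderón--Zygmund) upgrades $u$ to $H^2_{loc}(\R^N)$. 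The main difficulty throughout is precisely the one flagged in Remark \ref{remarkMMV}: $\|\cdot\|_{E_a}$ carries no local $L^2$-control, so the Coulomb norm must do all the work of pinning down $|u|_{L^p(B_R)}$, and converting that single local $L^p$-bound into full local $H^1$-regularity via a self-improving cutoff argument is the technical heart of the proof. \qed
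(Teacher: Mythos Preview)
Your proposal has the right skeleton for the $H^1_{loc}$ part---extract local $L^p$ control from the Coulomb norm via
$\int_{B_R}\!\int_{B_R}\frac{|v(x)|^p|v(y)|^p}{|x-y|^{N-\alpha}}\,dx\,dy\ge c_R\big(\int_{B_R}|v|^p\big)^2$,
then upgrade through a cutoff---but the cutoff step has a real gap. You try to bound $|\nabla(\chi v)|_2$ using the \emph{global} inequality $|\nabla v|_2\le C|v|_2$, and $|v|_2$ is precisely the quantity that is not controlled on $E_{\mathcal Q}^-$; nothing closes. The missing idea, which you explicitly dismiss, is the spectral bound: since $\inf\sigma(S_a)=:-\theta>-\infty$ and $v\in L_a^-$ lies in the spectral subspace for $[-\theta,0]$, one has
\[
|S_a v|_2^2=\int_{-\theta}^{0}\nu^2\,d|P_{a,\nu}v|_2^2\ \le\ \theta\int_{-\theta}^{0}|\nu|\,d|P_{a,\nu}v|_2^2=\theta\,\|v\|_{E_a}^2,
\]
so $|S_a v|_2\le\sqrt{\theta}\,\|v\|_{E_a}$. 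This is exactly what drives the paper's argument: expand $\int|\nabla(\eta v)|^2$ via the product rule to isolate the term $(S_a v,\eta^2 v)_2$, bound it by $|S_a v|_2\,|\eta^2 v|_2\le\sqrt{\theta}\,\|v\|_{E_a}\,|v|_{L^2(\mathrm{supp}\,\eta)}$, and observe that every remaining term involves only $|v|_{L^2(\mathrm{supp}\,\eta)}$. That local $L^2$ norm is then handled by your local $L^p$ bound---directly if $p\ge2$, and via interpolation between $L^p$ and $L^{2^*}$ with absorption of $|\nabla(\eta v)|_2$ if $p<2$.

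Your second part contains a more basic error: you invoke the Choquard equation to get $S_a u\in L^2_{loc}$, but the lemma concerns an \emph{arbitrary} $u\in E_{\mathcal Q}^-$, not a critical point of $\Phi_a$; there is no equation to use. The correct route is again the spectral bound above: applied to differences it gives $|S_a(u_n-u_m)|_2\le\sqrt{\theta}\,\|u_n-u_m\|_{E_a}\to0$, so $\{S_a u_n\}$ is Cauchy in $L^2(\R^N)$ and $S_a u\in L^2$ globally. Interior Calder\'on--Zygmund estimates then yield $u\in H^2_{loc}$. In short, the single inequality $|S_a v|_2\le\sqrt{\theta}\,\|v\|_{E_a}$ on $L_a^-$---a consequence of the spectrum of $S_a$ being bounded below---is the key to both halves of the lemma, and it is exactly the step you rejected.
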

\begin{proof}
Let us first prove the embedding of $E_{\mathcal Q}^-$ in $ H^1_{loc}(\R^N)$. Let $u \in E_{\mathcal Q}^-$. Since $L_a^-$ is dense in $E_{\mathcal Q}^-$, we can choose a sequence $\{u_n\}_{n\in\mathbb{N}}$  in $L_a^-$ with $\|u_n - u\|_{E_\mathcal Q} \to 0$, as $n \to\infty$. \\	
For any fixed $R\in \R^+$ and for any $x\in B_{R/2}(0)$, we have $B_R(x)\supset B_{R/2}(0)$ so that
\begin{multline}\label{ploc}
\|u\|_{\mathcal Q^{\alpha, p}}^{2p}=\int|u(x)|^p\int\frac{|u(y)|^p}{|x-y|^{N-\alpha}}dy	\geq \frac{1}{R^{N-\alpha}}\int|u(x)|^p\left[\int_{B_R(x)}|u(y)|^pdy\right]dx\\
\geq \frac{1}{R^{N-\alpha}}\int_{B_{R/2}(0)}|u(x)|^p\left[\int_{B_R(x)}|u(y)|^pdy\right]dx \\
\geq \frac{1}{R^{N-\alpha}}\int_{B_{R/2}(0)}|u(x)|^p\left[\int_{B_{R/2}(0)}|u(y)|^pdy\right]dx
=\frac{1}{R^{N-\alpha}}\left(\int_{B_{R/2}(0)}|u(x)|^pdx\right)^2
\end{multline}
Therefore, $u\in L_{loc}^p(\R^N)$.\\	
Given a bounded domain $\Omega\subset\R^N$,
let us take a function $\eta\in C_0^\infty(\R^N)$ with $\eta \equiv 1$ in $\Omega$. Then for any $v \in L_a^-\subset H^2(\R^N)$,
\begin{equation*}
-\Delta(\eta v)\eta v =\eta^2\cdot (-\Delta v) \cdot v + v^2 \cdot (-\Delta \eta)\eta - 2 \nabla (\eta v) \cdot v  \nabla \eta+2|\nabla\eta|^2v^2,
\end{equation*}
so that we get
\begin{multline}\label{etav}
\int |\nabla(\eta v)|^2dx 
\leq \langle S_av,\eta^2v\rangle-\int(V(x)-a)\eta^2v^2dx+\int v^2 \cdot (-\Delta \eta)\eta dx+\\+\frac{1}{2}\int|\nabla(\eta v)|^2dx+4\int|\nabla\eta|^2v^2dx
\leq C\|v\|_{E_a}^2+C|\eta v|_2^2+\frac{1}{2}\int|\nabla(\eta v)|^2dx
\end{multline}
so that:
\begin{itemize}
	\item if $p\geq 2$, combining \eqref{ploc} with \eqref{etav} yields immediately
	\begin{equation*}
	\int |\nabla(\eta v)|^2dx \leq C\|\eta v\|_{E_a}^2+C|\eta v|_{2}^2 \leq  C\|\eta v\|_{E_a}^2+C|\eta v|_{p}^2 \leq  C\|\eta v\|_{E_a}^2+C\|\eta v\|_{\mathcal Q^{\alpha, p}}^2
	\end{equation*}
	where $C$ depends on $\Omega$.
\item if $p < 2$ we combine the interpolation inequality with Young inequality and \eqref{ploc}:
\begin{multline*}
|\eta v|_{2}^2\leq C|\eta v|_{2^\ast}^{2\theta} |\eta v|_{p}^{2(1-\theta)} \quad \hbox{ where } \ \ \theta=\frac{N(2-p)}{2N-p(N-2)}\\
\leq \varepsilon |\nabla(\eta v)|_2^2+C_{\varepsilon}|\eta v|_{p}^2\leq  \varepsilon |\nabla(\eta v)|_2^2+C_{\varepsilon}\|\eta v\|_{\mathcal Q^{\alpha, p}}^2
\end{multline*} 
Inserting this last inequality into \eqref{etav} and choosing $\varepsilon$ small enough we obtain, again,
\begin{equation*}
\int_{\Omega} |\nabla(\eta v)|^2dx \leq  C\|\eta v\|_{E_a}^2+C\|\eta v\|_{\mathcal Q^{\alpha, p}}^2
\end{equation*}
	where $C$ depends on $\Omega$.
\end{itemize}

We have then proved that,  for any $v \in L_a^-\subset H^2(\R^N)$ 
\begin{equation*}
\int_{\Omega} |\nabla(\eta v)|^2dx \leq  C\|\eta v\|_{E_a}^2+C\|\eta v\|_{\mathcal Q^{\alpha, p}}^2
\end{equation*}
where $\Omega$ is any bounded domain and $\eta\in C_0^\infty(\R^N)$ with $\eta \equiv 1$ in $\Omega$. Applying this inequality to the above sequence $\{u_n\}$ we obtain that $\{u_n\}$ is a Cauchy sequence in $H^1(\Omega)$, and hence $u\in H^1_{loc}$. Thus  $E_{\mathcal Q}^-$ embeds continuously into $H^1_{loc}(\R^N)$.\\
Now we can follow the same lines of the proof of \cite[Lemma 2.1]{BD} to show that $S_au \in L^2$ and $u\in H^2_{loc}(\R^N)$.	For the convenience of the reader, we give the details. Since $\inf \sigma(S_a):=-\theta > -\infty$ we have
\begin{multline*}
	|S_a(u_n - u_m)|_2^2 = \int_{-\theta}^0 \nu^2d |P_{a,\nu}(u_n - u_m)|_2^2 
	\leq -\theta\int^0_{-\theta}\nu d |P_{a,\nu}(u_n - u_m)|_2^2 \\
	= \theta\left||S_a|^{\frac{1}{2}}(u_n - u_m)\right|_2^2 = \theta\|u_n - u_m\|^2_{E_a}.
\end{multline*}
Therefore $\{S_au_n\}$ is a Cauchy sequence in $L^2$ and it follows that $S_au_n \to S_au$ in $L^2$.\\	
For $r > 0$, $\varepsilon> 0$ and $y \in \R^N$, by Calderon-Zygmund inequality \cite[Theorem 9.11]{GT} we have
$$\|u_n - u_m\|_{H^2(B_r(y))} \leq C(r,\varepsilon) \left( |u_n - u_m|_{L^2(B_{r+\varepsilon}(y))} + |S_a(u_n- u_m)|_{L^2(B_{r+\varepsilon}(y))}\right).$$
This implies $u \in H^2_{loc}(\R^N)$.
\end{proof}
\begin{remark}
	A space closely related to ours has been introduced by Ruiz in \cite{Ru} in the more relevant  physical   case $N=3, \alpha =2$.\\
	We observe that another possible choice for the functional setting in the case $\lambda=a$ could be a variant of the Coulomb-Sobolev spaces introduced in \cite{MMVS}.  
\end{remark}

{\section{Existence of solutions for $\lambda\in(a,b)$. }}
 \setcounter{equation}{0}
The aim of this section is to prove Theorem \ref{th1}. As discussed in the previous section, if $a<\lambda<b$, then $\|\cdot\|_{E_\lambda}$ is equivalent to
$\|\cdot\|_{H^1}$ and $E_\lambda= H^1(\R^N)$.\\
Due to the geometry of the functional $\Phi_\lambda$, the main tool to find nontrivial critical points will be the following generalized
Linking Theorem \cite{BD,D}.

\begin{theorem}[Generalized Linking Theorem \cite{BD,D}]\label{LT}Let $X$ be a real Hilbert space. Suppose that $\Phi\in C^1(X, \R)$
satisfies the following conditions:
\begin{itemize}
\item[$(i)$] There exists a bounded self-adjoint linear operator $L : X \mapsto X$ and
a functional $\Psi \in C^1(X, \R)$ which is bounded below, weakly sequentially
lower semi-continuous with $\Psi': X \mapsto X$ weakly sequentially continuous
and such that
$$\Phi(u) = \frac{1}{2} \langle Lu,u\rangle-\Psi(u).$$
\item[$(ii)$] There exist a closed separable L-invariant subspace $Y$ of $X$ and a
positive constant $\alpha$ such that
$$\langle Lu, u\rangle \leq -\alpha \|u\|^2_X \ \hbox{for } u \in Y\ \  \hbox{and} \ \ \langle Lu, u\rangle \geq \alpha \|u\|^2_X \ \hbox{for } u \in Z:=Y^{\bot}.$$
\item[$(iii)$] There are constants $\kappa, \rho > 0$ such that $\Phi(u) \geq \kappa$ for $u \in Z$ and $\|u\|_X=\rho$.
\item[$(iv)$] Let $\zeta\in Z\setminus\{0\}$.
Then there exists $R>\rho$ ($R$ depending on $\zeta$) such that $\Phi(u) \leq 0$ for any $u \in \partial M$, where
$$M := \{u = u^- + s\zeta : u^- \in Y, s \geq 0~~\text{and}~~\|u\|_X\leq R\}.$$
\end{itemize}
Then there exists a Palais-Smale sequence $\{u_n\}$ such that $$\Phi(u_n) \to c\in [\kappa, \sup \Phi(M)]~~\text{and}~~\Phi'(u_n) \to 0,\quad\text{as}~n\to\infty.$$
\end{theorem}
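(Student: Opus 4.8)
The plan is to adapt the by-now standard machinery for strongly indefinite linking, going back to Kryszewski--Szulkin and used in the form of \cite{BD,D}: endow $X$ with a topology $\mathcal{T}$ weaker than the norm topology, run a minimax over a class of $\mathcal{T}$-continuous deformations of $M$ that fix $\partial M$ and do not increase $\Phi$, and extract a Palais--Smale sequence from the resulting minimax level by a $\mathcal{T}$-continuous deformation argument. Since $Y$ is separable I would fix a Hilbert basis $\{e_k\}_{k\ge1}$ of $Y$, write $P_Y,P_Z$ for the orthogonal projections onto $Y$ and $Z=Y^{\perp}$, and take $\mathcal{T}$ to be — on the norm-bounded set $M$, which is all that is needed — the metrizable topology induced by $|||u|||:=\|P_Z u\|_X+\sum_{k\ge1}2^{-k}|(P_Y u,e_k)_X|$, i.e.\ the norm topology in the $Z$-direction and the weak topology in the $Y$-direction. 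I would first record two structural facts from $(i)$--$(ii)$: by $L$-invariance of $Y$ (hence of $Z$), $Lu=Lu^-+Lu^+$ with $Lu^-\in Y$ and $Lu^+\in Z$, so $\tfrac12\langle Lu,u\rangle=\tfrac12\langle Lu^+,u^+\rangle+\tfrac12\langle Lu^-,u^-\rangle$ with summands $\ge\tfrac{\alpha}{2}\|u^+\|_X^2$ and $\le-\tfrac{\alpha}{2}\|u^-\|_X^2$; and $\Phi'=L-\Psi'$ with $\Psi'$ $\mathcal{T}$-sequentially continuous and $\Psi$ (which we may assume $\ge0$) $\mathcal{T}$-sequentially lower semicontinuous, so $\Phi$ is $\mathcal{T}$-sequentially upper semicontinuous on $M$.

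Next I would introduce the admissible class $\Gamma$ of all $h\in C(M,X)$ that are $\mathcal{T}$-continuous, satisfy $h|_{\partial M}=\mathrm{id}$ and $\Phi\circ h\le\Phi$ on $M$, and for which $u\mapsto h(u)-u$ is $\mathcal{T}$-continuous with range in a finite-dimensional subspace of $X$; then set $c:=\inf_{h\in\Gamma}\sup_{u\in M}\Phi(h(u))$. The linking step is the claim that $h(M)\cap\{u\in Z:\|u\|_X=\rho\}\neq\emptyset$ for every $h\in\Gamma$. Since $\mathrm{range}(h-\mathrm{id})$ is finite-dimensional, the problem of finding $u\in M$ with $h(u)\in Z$ and $\|h(u)\|_X=\rho$ reduces to a finite-dimensional one on a ``half-ball'', to which Brouwer degree applies; because $h=\mathrm{id}$ on $\partial M$ and $\Phi\le0$ there by $(iv)$ while $\Phi\ge\kappa>0$ on $\{u\in Z:\|u\|_X=\rho\}$ by $(iii)$, the relevant boundary map is unperturbed and the degree is nonzero (this is where $\rho<R$ and the geometry of $M$ enter). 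Hence $\sup_{M}\Phi\circ h\ge\kappa$ for all $h\in\Gamma$, so $c\ge\kappa$; and $h=\mathrm{id}\in\Gamma$ gives $c\le\sup\Phi(M)$.

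Finally I would argue by contradiction. If there were no Palais--Smale sequence with $\Phi(u_n)\to c$ and $\Phi'(u_n)\to0$, there would be $\varepsilon,\delta>0$ with $2\varepsilon<c$ and $\|\Phi'(u)\|_X\ge\delta$ on $\{u:|\Phi(u)-c|\le2\varepsilon\}$. Using $\Phi'=L-\Psi'$ and the $\mathcal{T}$-sequential continuity of $\Psi'$, I would construct — via a locally finite $\mathcal{T}$-open cover of the bounded $\mathcal{T}$-metrizable set $M$, a $\mathcal{T}$-Lipschitz subordinate partition of unity, and finite-dimensional approximations of $\Psi'$ on each patch — a bounded, locally Lipschitz, $\mathcal{T}$-continuous pseudo-gradient vector field $W$ for $\Phi$ which vanishes on $\{\Phi\le c-2\varepsilon\}$, in particular on $\partial M$. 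Its negative flow $\sigma(t,\cdot)$ is then globally defined on $M$, $\mathcal{T}$-continuous, equal to $\mathrm{id}$ on $\partial M$, nonincreasing for $\Phi$, with $\sigma(t,\cdot)-\mathrm{id}$ of finite-dimensional range, and for $T$ large it maps $\{\Phi\le c+\varepsilon\}\cap M$ into $\{\Phi\le c-\varepsilon\}$. Choosing $h\in\Gamma$ with $\sup_M\Phi\circ h<c+\varepsilon$, the composition $\sigma(T,\cdot)\circ h$ again lies in $\Gamma$ and satisfies $\sup_M\Phi(\sigma(T,\cdot)\circ h)\le c-\varepsilon$, contradicting the definition of $c$. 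Hence the required Palais--Smale sequence exists at a level $c\in[\kappa,\sup\Phi(M)]$.

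The hard part is this last step: producing a pseudo-gradient field and flow that are continuous for the weak-in-$Y$ topology $\mathcal{T}$ while keeping $\sigma(t,\cdot)-\mathrm{id}$ finite-dimensional. This is exactly where hypothesis $(i)$ — the splitting $\Phi=\tfrac12\langle Lu,u\rangle-\Psi(u)$ with $\Psi'$ only \emph{weakly} (hence $\mathcal{T}$-) sequentially continuous — is indispensable, and it is the piece I would import verbatim from Kryszewski--Szulkin / \cite{BD,D} rather than reconstruct.
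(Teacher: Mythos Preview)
The paper does not prove this theorem at all: it is quoted as a tool from \cite{BD,D} and used as a black box, so there is no ``paper's own proof'' to compare against. Your outline is precisely the Kryszewski--Szulkin / Bartsch--Ding scheme that underlies those references (weak-in-$Y$ topology $\mathcal{T}$, admissible deformations with finite-dimensional displacement, Brouwer-degree linking, $\mathcal{T}$-continuous pseudo-gradient flow), and it is the correct route; in particular your identification of the delicate step --- building a $\mathcal{T}$-continuous pseudo-gradient field whose flow has finite-dimensional displacement --- and your decision to import it from \cite{BD,D} are exactly what the cited sources do.
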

Let us now verify the our functional  $\Phi_\lambda$ verifies the linking structure of the above theorem, assumptions $(iii)$ and $(iv)$.
\begin{lemma}\label{lm2.2} For any $\lambda\in(a,b)$,
there exist $\kappa(\lambda)$, $\rho > 0$ such that for any $u \in E^+_\lambda\cap \partial B_\rho(0)$ it results that
$\inf\limits_{u \in E^+_\lambda, \|u\|_{E_\lambda}=\rho}\Phi_\lambda(u):=\kappa(\lambda)>0$.
\end{lemma}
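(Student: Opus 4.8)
The plan is to exploit that on the positive subspace $E^+_\lambda$ the quadratic part of $\Phi_\lambda$ is coercive, so that near the origin $\Phi_\lambda$ dominates $\tfrac12\|\cdot\|_{E_\lambda}^2$ minus a term of order $\|\cdot\|_{E_\lambda}^{2p}$ with exponent $2p>2$; this forces a strictly positive lower bound on a sufficiently small sphere, exactly as in the classical mountain-pass geometry.

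First I would record that for $u\in E^+_\lambda$ one has $u^-=0$, hence, by the identity $Q_\lambda(u)=\|u^+\|_{E_\lambda}^2-\|u^-\|_{E_\lambda}^2$ recalled in Section 2, $Q_\lambda(u)=\|u\|_{E_\lambda}^2$, and therefore by \eqref{phi}
\[
\Phi_\lambda(u)=\tfrac12\|u\|_{E_\lambda}^2-\tfrac1{2p}J(u),\qquad u\in E^+_\lambda .
\]
Next I would bound the nonlocal term: inequality \eqref{J1} gives $J(u)\le C\,|u|_{\frac{2Np}{N+\alpha}}^{2p}\le C\|u\|_{H^1}^{2p}$ with $C=C(N,\alpha,p)$ independent of $\lambda$, while Proposition \ref{lm1} yields, for $\lambda\in(a,b)$ and $u\in E^+_\lambda$, the inequality $\|u\|_{H^1}^2\le\beta_\lambda^{-1}\|u\|_{E_\lambda}^2$. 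Combining these, $J(u)\le C_\lambda\|u\|_{E_\lambda}^{2p}$ with $C_\lambda:=C\beta_\lambda^{-p}$, so on $\{u\in E^+_\lambda:\|u\|_{E_\lambda}=\rho\}$ one gets
\[
\Phi_\lambda(u)\ \ge\ \tfrac12\rho^2-\tfrac{C_\lambda}{2p}\,\rho^{2p}.
\]
Since $p>\frac{N+\alpha}{N}>1$ we have $2p-2>0$, so it suffices to fix $\rho=\rho(\lambda)>0$ small enough that $\tfrac{C_\lambda}{2p}\rho^{2p-2}\le\tfrac14$; then $\Phi_\lambda(u)\ge\tfrac14\rho^2$ on the whole sphere, whence $\kappa(\lambda):=\inf_{u\in E^+_\lambda,\ \|u\|_{E_\lambda}=\rho}\Phi_\lambda(u)\ge\tfrac14\rho^2>0$.

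I expect no real obstacle in the bare statement — it is the standard ``linking geometry on the positive subspace'' estimate. The only point worth attention, which will be used in Section 4 for the proof of Theorem \ref{th2}, is the $\lambda$-dependence of the constants: the constant $C$ in \eqref{J1} is uniform, so all $\lambda$-dependence enters through $\beta_\lambda=\beta_0(1-\lambda/b)$, which degenerates as $\lambda\to b^-$; consequently the admissible radius must shrink and, optimizing the displayed bound in $\rho$, one finds $\rho(\lambda)\asymp(b-\lambda)^{\frac{p}{2p-2}}$ and $\kappa(\lambda)\asymp(b-\lambda)^{\frac{p}{p-1}}$. On any compact subinterval of $(a,b)$, on the other hand, $\rho$ and $\kappa(\lambda)$ can be taken bounded below uniformly, which is all that is needed for the application of the Generalized Linking Theorem in Section 3.
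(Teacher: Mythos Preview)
Your proposal is correct and follows essentially the same approach as the paper: use $Q_\lambda(u)=\|u\|_{E_\lambda}^2$ on $E^+_\lambda$, control $J(u)$ via \eqref{J1} and Proposition~\ref{lm1} to get $\Phi_\lambda(u)\ge\tfrac12\rho^2-C\beta_\lambda^{-p}\rho^{2p}$, then pick $\rho$ small. The paper makes the explicit choice $\rho=\tfrac12\bigl(\beta_\lambda^p/2C\bigr)^{1/(2p-2)}$ yielding the quantitative bound \eqref{ka2}, and your asymptotics $\rho(\lambda)\asymp(b-\lambda)^{p/(2p-2)}$, $\kappa(\lambda)\asymp(b-\lambda)^{p/(p-1)}$ match theirs.
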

\begin{proof} By Proposition \ref{lm1}, for any $u \in E^+_\lambda\setminus\{0\}$ we have $\|u^+\|_{H^1}^{2}\leq\frac{1}{\beta_\lambda}\|u^+\|_{E_\lambda}^{2}.$
Then, by Sobolev embedding and Hardy-Littlewood-Sobolev inequality, we have
\begin{equation*}
\Phi_\lambda(u)\geq \frac{1}{2}\|u^+\|^2_{E_\lambda}-C(N,\alpha,p)\|u^+\|_{H^1}^{2p}
\geq\frac{1}{2}\|u^+\|^2_{E_\lambda}-\frac{C(N,\alpha,p)}{\beta_\lambda^p}\|u^+\|^{2p}_{E_\lambda}.
\end{equation*}
Let $\rho =\frac{1}{2}\left(\frac{\beta_\lambda^p}{2C(N,\alpha,p)}\right)^{\frac{1}{2p-2}}$; since $p>1$, we have
\begin{equation}\label{ka2}
\kappa(\lambda):= \Phi_\lambda(u)|_{E^+_\lambda\cap \partial B_\rho(0)}\geq\left(\frac{1}{8}-\frac{1}{2^{2p+1}}\right)
\left(\frac{\beta_\lambda^p}{2C(N,\alpha,p)}\right)^{\frac{1}{p-1}}>0.
\end{equation}
\end{proof}

\begin{lemma}\label{lm2.3}
Let $Z_0$ be a finite dimensional subspace of $E^+_a$. Then $\Phi_\lambda(u) \to -\infty$ as
$\|u\|_{E_\lambda}\to\infty$ in $E^-_\lambda\oplus Z_0$.
\end{lemma}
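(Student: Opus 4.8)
\noindent\emph{Proof strategy.} Recall that for $\lambda\in(a,b)$ one has $E_\lambda=H^1(\R^N)$ with $\|\cdot\|_{E_\lambda}$ equivalent to $\|\cdot\|_{H^1}$ (cf. Proposition \ref{lm1}), and that the spectral projection $P_{\lambda,0}$ is constant for $\lambda\in[a,b)$, so that $E^\pm_\lambda=E^\pm_a$; in particular $Z_0\subset E^+_\lambda$. Hence, for $u=u^-+w$ with $u^-\in E^-_\lambda$ and $w\in Z_0$, the mixed term of $Q_\lambda$ vanishes and, since $J\ge0$ (Proposition \ref{pro3}),
\[
\Phi_\lambda(u)=\tfrac12\|w\|^2_{E_\lambda}-\tfrac12\|u^-\|^2_{E_\lambda}-\tfrac1{2p}J(u^-+w).
\]
The plan is to argue by contradiction: if the conclusion fails, there are $C_0>0$ and a sequence $u_n=u_n^-+w_n\in E^-_\lambda\oplus Z_0$ with $s_n:=\|u_n\|_{E_\lambda}\to\infty$ and $\Phi_\lambda(u_n)\ge-C_0$. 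Set $v_n:=u_n/s_n=v_n^-+\zeta_n$, so that $\|v_n^-\|^2_{E_\lambda}+\|\zeta_n\|^2_{E_\lambda}=1$.

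From $\Phi_\lambda(u_n)\ge-C_0$ I would extract two facts. First, dropping the nonpositive term $-\tfrac1{2p}J(u_n)$ yields $\|\zeta_n\|^2_{E_\lambda}-\|v_n^-\|^2_{E_\lambda}\ge-2C_0/s_n^2$, which together with $\|v_n^-\|^2_{E_\lambda}+\|\zeta_n\|^2_{E_\lambda}=1$ gives $\|\zeta_n\|^2_{E_\lambda}\ge\tfrac12-C_0/s_n^2\to\tfrac12$: the normalized $Z_0$-directions stay bounded away from $0$. Second, keeping the $J$-term and using $\|\zeta_n\|^2_{E_\lambda}-\|v_n^-\|^2_{E_\lambda}\le1$ gives $J(u_n)\le p\,s_n^{2}+2pC_0$, hence, by the $2p$-homogeneity of $J$, $J(v_n)=s_n^{-2p}J(u_n)\to0$, since $2p>2$ (recall $p>\tfrac{N+\alpha}{N}>1$). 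Then I would pass to a subsequence: as $\|v_n^-\|_{E_\lambda}\le1$, the sequence $\{v_n^-\}$ is bounded in $H^1$ and $v_n^-\rightharpoonup v^-$ in $H^1$ with $v^-\in E^-_\lambda$ (a closed subspace); since $Z_0$ is finite dimensional, $\zeta_n\to\zeta$ in $Z_0$; hence $v_n\rightharpoonup v:=v^-+\zeta$ in $H^1$. By weak sequential lower semicontinuity of $J$ (Lemma \ref{J}$(i)$), $J(v)\le\liminf_n J(v_n)=0$; since $J(u)=\|u\|^{2p}_{\mathcal Q^{\alpha,p}}$ and $\|\cdot\|_{\mathcal Q^{\alpha,p}}$ is a norm, this forces $v=0$. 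But then, by orthogonality of $E^\pm_\lambda$, $0=\|v\|^2_{E_\lambda}=\|v^-\|^2_{E_\lambda}+\|\zeta\|^2_{E_\lambda}$, so $\zeta=0$, contradicting $\|\zeta\|^2_{E_\lambda}=\lim_n\|\zeta_n\|^2_{E_\lambda}\ge\tfrac12$.

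The main obstacle is that the quadratic part $\tfrac12\big(\|w\|^2_{E_\lambda}-\|u^-\|^2_{E_\lambda}\big)$ is indefinite and may be of order $s_n^2$, so the divergence of $\Phi_\lambda$ to $-\infty$ cannot be read off the quadratic terms alone: it must be produced by the superquadratic nonlocal term $J$. The crux is thus the two-sided use of the bound $\Phi_\lambda(u_n)\ge-C_0$ above --- namely that along a hypothetical ``bad'' sequence the normalized directions $\zeta_n$ cannot collapse to $0$, which makes $J(v_n)\to0$ incompatible with weak lower semicontinuity, since the weak limit $v$ would then be a nonzero element of the closed subspace $E^-_\lambda\oplus Z_0$ with $J(v)=0$.
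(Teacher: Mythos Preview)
Your argument is correct and uses the same essential ingredients as the paper's proof --- the $2p$-homogeneity of $J$, the weak sequential lower semicontinuity of $J$ (Lemma~\ref{J}$(i)$), the strict positivity of $J$ on nonzero elements, and the compactness coming from $\dim Z_0<\infty$. The organization, however, differs. The paper (following Ackermann) works \emph{directly}: it fixes $\beta\in(0,1)$, sets $\gamma=\sin(\arctan\beta)$, and proves that $\delta:=\inf\{J(u):\|u\|_{E_\lambda}=1,\ u^+\in Z_0,\ \|u^+\|_{E_\lambda}\ge\gamma\}>0$ via exactly your weak-limit argument; it then does a case split on the ratio $\|u^+\|_{E_\lambda}/\|u^-\|_{E_\lambda}$, obtaining $\Phi_\lambda(u)\le\tfrac12\|u\|^2_{E_\lambda}-\tfrac{\delta}{2p}\|u\|^{2p}_{E_\lambda}$ in the first case and $\Phi_\lambda(u)\le-\tfrac{1-\beta^2}{2(1+\beta^2)}\|u\|^2_{E_\lambda}$ in the second. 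Your contradiction argument is a bit more economical: by reading off simultaneously the quadratic and the $J$-information from $\Phi_\lambda(u_n)\ge -C_0$, you avoid the auxiliary parameter $\beta$ and the case split altogether. What the paper's route buys, on the other hand, is an explicit quantitative decay rate for $\Phi_\lambda$ along rays in the cone $\|u^+\|\ge\gamma\|u\|$, which your argument does not provide (nor does the lemma require). One minor imprecision: your preamble asserts $E^\pm_\lambda=E^\pm_a$ for $\lambda\in[a,b)$; this is true for the $+$ parts (which is all you use, since $Z_0\subset E^+_a=E^+_\lambda$), but $E^-_a$ is the completion with respect to a strictly weaker norm and is larger than $E^-_\lambda$ for $\lambda\in(a,b)$. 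This does not affect your proof.
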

\begin{proof} Following \cite[Lemma 4.2]{A}, for $\beta\in (0, 1)$, we set $\gamma = \sin(\arctan \beta) \in (0, 1)$ and
$$K= \{u \in E_\lambda: u^+\in Z_0, \|u^+\|_{E_\lambda} \geq \gamma, \|u\|_{E_\lambda}= 1\}.$$
Then there is $\{u_n\} \subset K$
with $\lim\limits_{n\to\infty}J(u_n) = \inf J(K) =: \delta \geq 0$. Since $K$ is bounded we may assume
that $u_n \rightharpoonup u \in E_\lambda$ such that $u^+_n \to u^+$ in $Z_0$. Clearly $\|u^+\|_{E_\lambda}\geq \gamma$ and $u\neq0$. Since $J$ is weakly sequentially lower semi-continuous in $E_\lambda$, we have
$\delta \geq J(u) > 0.$

Let $u\in E^-_\lambda\oplus Z_0$ satisfy $\|u\|_{E_\lambda}\geq1$. We have two cases.\\
$\bullet$ If $\|u^+\|_{E_\lambda}/\|u^-\|_{E_\lambda}\geq\beta$ we have
$$\|u^+\|_{E_\lambda} /\|u\|_{E_\lambda} = \sin\arctan(\|u^+\|_{E_\lambda} /\|u^-\|_{E_\lambda})\geq\gamma$$
and therefore $u/\|u\|_{E_\lambda}\in K$. By $J(u)=J(u/\|u\|_{E_\lambda})\|u\|_{E_\lambda}^{2p}$ and the definition of $\delta$ we obtain
$J(u) \geq \delta \|u\|_{E_\lambda}^{2p}$ and
$$\Phi_\lambda(u) \leq \frac{1}{2} \|u\|_{E_\lambda}^2 -\frac{ \delta}{2p}\|u\|_{E_\lambda}^{2p}.$$
$\bullet$ If $\|u^+\|_{E_\lambda}/\|u^-\|_{E_\lambda}<\beta$ we have
\begin{equation}\label{e2.2}
\Phi_\lambda(u) \leq \frac{1}{2}(\|u^+\|^2_{E_\lambda} - \|u^-\|^2_{E_\lambda}) \leq -\frac{1 -\beta^2}{ 2(1 + \beta^2)} \|u\|^2_{E_\lambda}.
\end{equation}
For $\|u\|_{E_\lambda}$ large we find in either case that (\ref{e2.2}) is satisfied, and the claim is proved
since $\beta^2 < 1$.
\end{proof}


By Lemma \ref{J} and by Lemma \ref{lm2.3} $\Phi_\lambda$ satisfies all the conditions in Theorem \ref{LT}, for any $\lambda\in(a,b)$.
Thus, there exists a Palais-Smale sequence $\{u_n\}$ at level $c_\lambda$,
\begin{equation}\label{c}
c_\lambda\in[\kappa(\lambda), \sup\limits_{u\in E^-_\lambda \oplus \R^+\zeta}\Phi_\lambda],
\end{equation}
where $\kappa(\lambda)>0$ is a constant that depends on $\lambda$.
Moreover, by Proposition \ref{lm1} and \eqref{ka2}, we have
$$\kappa(\lambda)\geq\left(\frac{1}{8}-\frac{1}{2^{2p+1}}\right)
\left(\frac{\beta_\lambda^p}{2C(N,\alpha,p)}\right)^{\frac{1}{p-1}}\to 0,\quad\text{as}~\lambda\to b^-.$$
In the following lemma we verify the boundedness of any (PS) sequence.
\begin{lemma}\label{lm2.1}
 If $\{u_n\}$ is a $(PS)_{c_\lambda}-$sequence for $\Phi_\lambda$.
 Then $\|u_n\|_{E_\lambda}$ are bounded.
\end{lemma}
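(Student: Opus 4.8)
The plan is to exploit the classical device for strongly indefinite functionals: combine the energy bound $\Phi_\lambda(u_n)\to c_\lambda$ with the derivative bound $\langle\Phi_\lambda'(u_n),u_n\rangle=o(\|u_n\|_{E_\lambda})$ to produce a coercive estimate, using that $J$ is $2p$-homogeneous with $p>1$. Concretely, since $\langle\Phi_\lambda'(u_n),u_n\rangle=Q_\lambda(u_n)-J(u_n)$, we have for large $n$
\begin{equation*}
c_\lambda+1+\|u_n\|_{E_\lambda}\ge\Phi_\lambda(u_n)-\frac{1}{2p}\langle\Phi_\lambda'(u_n),u_n\rangle=\left(\frac12-\frac1{2p}\right)Q_\lambda(u_n).
\end{equation*}
This controls $Q_\lambda(u_n)=\|u_n^+\|_{E_\lambda}^2-\|u_n^-\|_{E_\lambda}^2$ from above by a quantity linear in $\|u_n\|_{E_\lambda}$, but $Q_\lambda$ is indefinite, so this alone does not bound $\|u_n^-\|_{E_\lambda}$. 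The standard remedy is to test $\Phi_\lambda'(u_n)$ separately against $u_n^+$ and $-u_n^-$.

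First I would test against $u_n^+\in E_\lambda^+$. Using Proposition \ref{lm1} ($\|u_n^+\|_{H^1}^2\le\beta_\lambda^{-1}\|u_n^+\|_{E_\lambda}^2$), the estimate \eqref{R3} in Lemma \ref{J} ($\langle J'(u_n),u_n^+\rangle\le J(u_n)^{1-\frac1{2p}}J(u_n^+)^{\frac1{2p}}\le C\|u_n\|_{E_\lambda}^{2p-1}\|u_n^+\|_{E_\lambda}$), and $\langle\Phi_\lambda'(u_n),u_n^+\rangle=\|u_n^+\|_{E_\lambda}^2-\langle J'(u_n),u_n^+\rangle/(2p)$ together with the $o(\|u_n\|_{E_\lambda})$ bound, one gets
\begin{equation*}
\|u_n^+\|_{E_\lambda}^2\le C\|u_n\|_{E_\lambda}^{2p-1}\|u_n^+\|_{E_\lambda}+o(\|u_n\|_{E_\lambda}),
\end{equation*}
hence $\|u_n^+\|_{E_\lambda}\le C\|u_n\|_{E_\lambda}^{2p-1}+o(1)$ after dividing. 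Testing against $-u_n^-$ gives the symmetric bound $\|u_n^-\|_{E_\lambda}^2\le C\|u_n\|_{E_\lambda}^{2p-1}\|u_n^-\|_{E_\lambda}+o(\|u_n\|_{E_\lambda})$, so $\|u_n^-\|_{E_\lambda}\le C\|u_n\|_{E_\lambda}^{2p-1}+o(1)$; adding, $\|u_n\|_{E_\lambda}\le C\|u_n\|_{E_\lambda}^{2p-1}+o(1)$. Since $2p-1>1$, this does not immediately close unless we also use the upper bound on $J$; so the cleaner route is: from the first displayed inequality, $J(u_n)=Q_\lambda(u_n)-\langle\Phi_\lambda'(u_n),u_n\rangle\le C(1+\|u_n\|_{E_\lambda})$, and then the test against $u_n^\pm$ with \eqref{R3} used in the sharper form $\langle J'(u_n),u_n^\pm\rangle\le J(u_n)^{1-\frac1{2p}}J(u_n^\pm)^{\frac1{2p}}\le CJ(u_n)^{1-\frac1{2p}}\|u_n^\pm\|_{H^1}$ yields $\|u_n^\pm\|_{E_\lambda}\le CJ(u_n)^{1-\frac1{2p}}+o(1)$, whence
\begin{equation*}
\|u_n\|_{E_\lambda}\le CJ(u_n)^{1-\frac1{2p}}+o(1)\le C(1+\|u_n\|_{E_\lambda})^{1-\frac1{2p}}+o(1).
\end{equation*}
Because the exponent $1-\frac1{2p}<1$, a standard Young-inequality argument absorbs the right-hand side and gives $\|u_n\|_{E_\lambda}\le C$.

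The only delicate point is bookkeeping the $o(\|u_n\|_{E_\lambda})$ terms correctly when dividing through: one argues by contradiction, supposing $\|u_n\|_{E_\lambda}\to\infty$ along a subsequence, normalizing, and deriving $1\le C\cdot(\text{something tending to }0)$ from the last display. I expect no genuine obstacle here — everything needed ($2p$-homogeneity and convexity of $J$, the interpolation inequality \eqref{R3}, and the equivalence $E_\lambda=H^1$ from Proposition \ref{lm1}) is already established in Section 2 — the main care is simply in keeping the chain of inequalities clean and invoking $p>1$ at the right place.
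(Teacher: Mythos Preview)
Your proposal is correct and follows essentially the same approach as the paper: bound $J(u_n)\le C(1+\|u_n\|_{E_\lambda})$ via $\Phi_\lambda-\tfrac12\Phi_\lambda'$ (the paper goes there directly rather than via your detour through $Q_\lambda$), then test $\Phi_\lambda'(u_n)$ against $u_n^+$ with the interpolation estimate \eqref{R3} to obtain $\|u_n^+\|_{E_\lambda}^2\le C(1+\|u_n\|_{E_\lambda})^{2-1/p}$, and conclude since the exponent is $<2$. The only tactical difference is that the paper controls $\|u_n^-\|_{E_\lambda}$ via the energy inequality $\|u_n^-\|_{E_\lambda}^2\le -2\Phi_\lambda(u_n)+\|u_n^+\|_{E_\lambda}^2$ rather than by testing against $-u_n^-$; both routes close the estimate in the same way.
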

\begin{proof} Let $n$ large such that $\Phi_\lambda(u_n)\leq c_\lambda+1$ and $\|\Phi'_\lambda(u_n)\|_{E_\lambda}\leq1$. Then
\begin{equation}\label{20}
 \aligned
c_\lambda+1+\frac{1}{2}\|u_n\|_{E_\lambda}\geq\Phi_\lambda(u_n)-\frac{1}{2}\langle\Phi'_{\lambda}(u_n),u_n\rangle
=\left(\frac{1}{2}-\frac{1}{2p}\right)J(u_n).
  \endaligned
\end{equation}
By \eqref{e3.12} and \eqref{20}, we have
\begin{multline*}\label{23}
\|u_n^+\|_{E_\lambda}^2=\langle\Phi'_{\lambda}(u_n),u_n^+\rangle+\int(I_\alpha\ast|u_n|^p)|u_n|^{p-2}u_nu_n^+dx \\
\leq 1\cdot\|u_n^+\|_{E_\lambda}+J(u_n)^{1-\frac{1}{2p}}J(u_n^+)^{\frac{1}{2p}}
\leq \|u_n^+\|_{E_\lambda}+C(\lambda)(1+\|u_n\|_{E_\lambda})^{1-\frac{1}{2p}}\|u_n^+\|_{E_\lambda}
\end{multline*}
Thus
\begin{equation*}
 \aligned
\|u_n^+\|_{E_\lambda}^2\leq C(\lambda)(1+\|u_n\|_{E_\lambda})^{2-\frac{1}{p}},
  \endaligned
\end{equation*}
which together with
$$ \| u^-_n\|^2_{E_\lambda} \leq -2\Phi_\lambda(u_n) + \| u^+_n\|^2_{E_\lambda}$$
implies that
\begin{equation*}
 \aligned
\|u_n\|_{E_\lambda}^2=\| u^+_n\|^2_{E_\lambda}+\| u^-_n\|^2_{E_\lambda}\leq C(\lambda)(1+\|u_n\|_{E_\lambda})^{2-\frac{1}{p}}.
  \endaligned
\end{equation*}
Since $2-\frac{1}{p}<2$, $\|u_n\|_{E_\lambda}$ is bounded.
\end{proof}

By the previous arguments we have obtained a $(PS)_{c_\lambda}$-sequence $\{u_n\}$ which is bounded in $E_\lambda$. Then by using Lions' concentration compactness principle \cite[Lemma 1.21]{Willem} and the invariance of $\Phi_\lambda$ under the action of $\mathbb{Z}^N$, we get a nontrivial weak solution for \eqref{Choqeq}. Similar to \cite{A}, by using Theorem 4.2 in \cite{BD}, the existence of infinitely many geometrically distinct solutions can be obtained in a similar way.  Thus we have proved Theorem \ref{th1}.

\vskip4mm
{\section{ Bifurcation from zero when $\lambda\to b^-$. }}
 \setcounter{equation}{0}
In this section we prove Theorem \ref{th2}, that is, the bifurcation phenomenon occurring on the left borderline point of the spectrum, for some values of $p$, extending to the nonlocal case the results present in \cite{HKS, T}.\\
Since $b \in \sigma(-\Delta+V)$, we know that there exists a Bloch wave $\Psi$ in $H^2_{\text{loc}}(\R^N) \cap C^1(\R^N) \cap L^\infty(\R^N)$ that satisfies $-\Delta \Psi + V \Psi = b\Psi$ (see \cite{E}).
$\Psi$ is uniformly almost-periodic (UAP) in the sense of Besicovitch \cite{B}.
The essential tool is a nonlocal version of the Riemann-Lebesgue lemma and the estimate of the nonlocal part of the functional $\Phi_\lambda( \Psi_{(b-\lambda)^{-1/2}})$:  $$\int\int\frac{|\Psi_{(b-\lambda)^{-1/2}}(x)|^p|\Psi_{(b-\lambda)^{-1/2}}(y)|^p}{|x-y|^{N-\alpha}}dxdy,$$
where the testing vectors $\Psi_{(b-\lambda)^{-1/2}}$ are constructed from the Bloch wave $\Psi$ of the linear Schr\"{o}dinger operator.

To any uniformly almost-periodic (UAP) function $f: \R^N \to \mathbb{C}$ is associated a mean-value, $M(f)$, which may be defined by
$$M(f)=\lim\limits_{T\to\infty}\frac{1}{T^N}\int_0^T\cdot\cdot\cdot\int_0^Tf(x)dx_1\cdot\cdot\cdot dx_N.$$
We recall here the classical Riemann-Lebesgue lemma.
\begin{proposition}\label{p2.1} (\cite{HKS})
 Let $f: \R^N \to \mathbb{C}$ be a uniformly almost-periodic (UAP) function and let $g\in L^1(\R^N)$.
Then
\begin{equation*}
\lim\limits_{T\to\infty} \int f(Tx)g(x)dx = M(f)\int g(x) dx.
\end{equation*}
\end{proposition}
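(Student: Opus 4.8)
The plan is to reduce the statement to the case of a single exponential via Bohr's approximation theorem, and then to read it off from the classical Riemann–Lebesgue lemma for the Fourier transform of an $L^1$ function. Throughout I would write $\widehat g(\xi):=\int_{\R^N}g(x)e^{-i\xi\cdot x}\,dx$ for the Fourier transform, and use two classical facts from the theory of almost periodic functions (see \cite{B}): first, that the mean value $M$ is well defined and linear on UAP functions and satisfies $|M(h)|\le\|h\|_{L^\infty(\R^N)}$, which is immediate since every average $T^{-N}\int_{[0,T]^N}|h|$ is bounded by $\|h\|_{L^\infty}$; second, Bohr's theorem, that $f$ is a uniform limit on $\R^N$ of trigonometric polynomials $P(x)=\sum_{j=1}^{k}c_je^{i\lambda_j\cdot x}$ with $\lambda_j\in\R^N$ and $c_j\in\mathbb C$.

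First I would record the mean value of a pure exponential $e_\lambda(x):=e^{i\lambda\cdot x}$. A direct computation gives
\begin{equation*}
\frac{1}{T^N}\int_0^T\!\!\cdots\!\!\int_0^T e^{i\lambda\cdot x}\,dx_1\cdots dx_N=\prod_{k=1}^{N}\frac{e^{i\lambda_kT}-1}{i\lambda_kT}\longrightarrow\begin{cases}1,&\lambda=0,\\0,&\lambda\neq0,\end{cases}
\end{equation*}
where the $k$-th factor is read as $1$ when $\lambda_k=0$; hence $M(e_\lambda)=1$ if $\lambda=0$ and $M(e_\lambda)=0$ otherwise, and by linearity $M(P)=\sum_{j:\lambda_j=0}c_j$ for a trigonometric polynomial $P$ as above. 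Then, still for such a $P$, linearity gives
\begin{equation*}
\int_{\R^N}P(Tx)g(x)\,dx=\sum_{j=1}^{k}c_j\int_{\R^N}e^{iT\lambda_j\cdot x}g(x)\,dx=\sum_{j=1}^{k}c_j\,\widehat g(-T\lambda_j).
\end{equation*}
Since $\widehat g(\xi)\to0$ as $|\xi|\to\infty$ by the classical Riemann–Lebesgue lemma, every term with $\lambda_j\neq0$ tends to $0$ as $T\to\infty$ (because $|T\lambda_j|\to\infty$), while each term with $\lambda_j=0$ equals $c_j\int_{\R^N}g$; therefore $\int_{\R^N}P(Tx)g(x)\,dx\to M(P)\int_{\R^N}g(x)\,dx$.

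Finally I would pass to a general UAP $f$ by a $3\varepsilon$-argument. Given $\varepsilon>0$, pick a trigonometric polynomial $P$ with $\|f-P\|_{L^\infty}<\varepsilon$; then $|M(f)-M(P)|\le\varepsilon$ and
\begin{equation*}
\Big|\int_{\R^N}f(Tx)g\,dx-M(f)\int_{\R^N}g\Big|\le\varepsilon\|g\|_{L^1}+\Big|\int_{\R^N}P(Tx)g\,dx-M(P)\int_{\R^N}g\Big|+\varepsilon\|g\|_{L^1}.
\end{equation*}
Letting $T\to\infty$, the middle term vanishes by the previous step, so $\limsup_{T\to\infty}\big|\int_{\R^N}f(Tx)g\,dx-M(f)\int_{\R^N}g\big|\le2\varepsilon\|g\|_{L^1}$; since $\varepsilon>0$ is arbitrary, the limit is $M(f)\int_{\R^N}g$, as claimed.

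The only genuinely substantial input is Bohr's approximation theorem (together with the existence of the mean value) for uniformly almost periodic functions on $\R^N$; once that is granted, the argument is a routine combination of the linearity of $M$, the elementary exponential average above, and the one-line Riemann–Lebesgue decay of $\widehat g$. So I do not expect any real obstacle beyond correctly quoting the almost periodic function theory — the analytic content here is entirely classical.
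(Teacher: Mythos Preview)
Your proof is correct. The approach via Bohr's approximation theorem, the explicit mean of pure exponentials, the classical Riemann--Lebesgue lemma for $\widehat g$, and the $3\varepsilon$ density argument is the standard way to establish this result, and every step is sound.

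Note, however, that the paper does not actually prove this proposition: it is simply quoted from \cite{HKS} as a known fact and no argument is given. So there is no proof in the paper to compare against; you have supplied what the paper treats as a black box. Your write-up could therefore serve as a self-contained justification of a result the authors merely cite.
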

For $R \in (0, +\infty)$, let us set
$$\Psi_R(x) := R^{-\frac{N}{2}} \eta\left(\frac{x}{R}\right) \Psi(x)$$
where $\eta\in C^\infty_0(\R^N ; [0, 1])$ equals $1$ on $B(0, 1)$. Then, $\Psi_R\in H^2(\R^N)\cap C^1(\R^N)$.\\
It is easy to see from Proposition \ref{p2.1} that for all $\gamma\in[1,+\infty)$,
\begin{equation*}\label{2.2}
\lim\limits_{R\to\infty}R^{\frac{N}{2}-\frac{N}{\gamma}}|\Psi_R|_\gamma=[M(\Psi^\gamma)]^{\frac{1}{\gamma}}|\eta|_\gamma.
\end{equation*}
The following  proposition states a nonlocal version of the Riemann-Lebesgue lemma, which is an easy consequence of the classical one.
\begin{lemma}\label{lm3.2}
 Let $f: \R^N \to \mathbb{C}$ he a  uniformly almost-periodic (UAP) function and let $g\in L^{\frac{2N}{N+\alpha}}(\R^N)$.
Then
\begin{equation*}\label{e3.0}
\lim\limits_{T\to\infty} \int\int\frac{f(Tx)g(x)f(Ty)g(y)}{|x-y|^{N-\alpha}}dxdy = [M(f)]^2\int\int\frac{g(x)g(y)}{|x-y|^{N-\alpha}} dxdy.
\end{equation*}
\end{lemma}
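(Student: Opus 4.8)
The plan is to derive this nonlocal Riemann–Lebesgue statement directly from the classical one, Proposition \ref{p2.1}, by regarding the double integral as an ordinary integral against a fixed $L^1$ kernel. First I would note that the kernel $K(x,y) = g(x)g(y)|x-y|^{\alpha-N}$ is, up to a constant, exactly the integrand appearing in $\int (I_\alpha \ast g)g$, which by the Hardy–Littlewood–Sobolev inequality \eqref{HLS} is absolutely convergent whenever $g\in L^{\frac{2N}{N+\alpha}}(\R^N)$; in fact $\int\int |g(x)||g(y)||x-y|^{\alpha-N}\,dx\,dy \le C|g|_{\frac{2N}{N+\alpha}}^2 < \infty$. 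So $K \in L^1(\R^N\times\R^N)$.

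The natural approach is then to apply the classical Riemann–Lebesgue lemma on the product space $\R^N\times\R^N\cong\R^{2N}$. Define $F:\R^{2N}\to\mathbb C$ by $F(x,y) := f(x)f(y)$. Since $f$ is UAP on $\R^N$, the product $F$ is UAP on $\R^{2N}$ (products and the UAP property are stable under this construction, and the mean value factorizes: $M_{\R^{2N}}(F) = M(f)^2$, which follows by writing the $2N$-fold average as an iterated average and using $M(f\cdot\text{const}) = M(f)\cdot\text{const}$). Then Proposition \ref{p2.1}, applied in dimension $2N$ with the dilation $(x,y)\mapsto(Tx,Ty)$, the UAP function $F$ and the $L^1(\R^{2N})$ function $K$, gives exactly
\begin{equation*}
\lim_{T\to\infty}\int\int F(Tx,Ty)K(x,y)\,dx\,dy = M(f)^2 \int\int K(x,y)\,dx\,dy,
\end{equation*}
which is the claimed identity since $F(Tx,Ty)K(x,y) = f(Tx)g(x)f(Ty)g(y)|x-y|^{\alpha-N}$.

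One technical point I would want to be careful about is that Proposition \ref{p2.1} as stated is for $\R^N$ with a single dilation parameter $T$ acting as $x\mapsto Tx$; here the same scalar $T$ dilates both blocks of coordinates, so the map $(x,y)\mapsto(Tx,Ty)$ is precisely scalar multiplication by $T$ on $\R^{2N}$, and the statement applies verbatim once we know $F$ is UAP on $\R^{2N}$. Verifying that $F(x,y)=f(x)f(y)$ is UAP on $\R^{2N}$ and computing its mean value is the only step requiring a small argument — it amounts to approximating $f$ uniformly by trigonometric polynomials $\sum c_j e^{i\langle \xi_j, x\rangle}$, whence $F$ is approximated by trigonometric polynomials in the $2N$ variables of the form $\sum c_j c_k e^{i\langle(\xi_j,\xi_k),(x,y)\rangle}$, and reading off $M_{\R^{2N}}(F) = M(f)^2$ from the constant ($\xi=0$) term. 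I do not expect any real obstacle here; the whole lemma is, as the text says, "an easy consequence of the classical one," and the only care needed is the bookkeeping of working on the doubled space and invoking absolute integrability of the kernel to justify the argument.
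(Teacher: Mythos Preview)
Your proposal is correct and follows exactly the same approach as the paper: verify via Hardy--Littlewood--Sobolev that the kernel $g(x)g(y)|x-y|^{\alpha-N}$ lies in $L^1(\R^{2N})$, observe that $(x,y)\mapsto f(x)f(y)$ is UAP on $\R^{2N}$ with mean $M(f)^2$, and apply Proposition~\ref{p2.1} on the doubled space. The paper's proof is in fact just a two-line sketch of this, so your version is a strictly more detailed rendering of the same argument, including the justification (via trigonometric approximation) that the product of UAP functions in separate variables is UAP on the product space --- a point the paper simply asserts.
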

\begin{proof} Since $$\left|\int\int\frac{g(y)g(x)}{|x-y|^{N-\alpha}}dxdy\right|\leq\left(\int|g|^{\frac{2N}{N+\alpha}}dx\right)^{\frac{N+\alpha}{N}}<\infty $$ and $f(Tx)f(Ty)$ is a uniformly almost-periodic (UAP) function, then by Proposition \ref{p2.1}, we get the conclusion.
\end{proof}
Let us now apply the above lemma  to estimate the functional $J$ tested on the Bloch wave $\Psi$:
\begin{multline*}
J(\Psi_{R})=\int\int\frac{|\Psi_{R}(x)|^p|\Psi_{R}(y)|^p}{|x-y|^{N-\alpha}}dxdy =R^{-Np}\int\int\frac{|\eta\left(\frac{x}{R}\right) \Psi(x)|^p|\eta\left(\frac{y}{R}\right) \Psi(y)|^p}{|x-y|^{N-\alpha}}dxdy \\
=R^{N+\alpha-Np} \int\int\frac{|\eta\left(x\right) \Psi(Rx)|^p|\eta\left(y\right) \Psi(Ry)|^p}{|x-y|^{N-\alpha}}dxdy,
\end{multline*}
then by Lemma \ref{lm3.2}, we get 
\begin{equation}\label{lm3.3}
\lim\limits_{R\to\infty}R^{Np-N-\alpha}J(\Psi_{R})=[M(|\Psi|^p)]^2J(\eta).
\end{equation}
Now, for $\lambda\in (a, b)$, let $R(\lambda) := \frac{1}{\sqrt{b - \lambda}}$.
From \cite{T}, we know that
\begin{equation}\label{e3.1}
\|P_0\Psi_{R(\lambda)}\|_{H^1}=O(b-\lambda)~\text{as}~\lambda\to b.
 \end{equation}
 By the Hardy--Littlewood--Sobolev inequality and Sobolev inequality, we have
\begin{equation*}
J(P_0\Psi_{R(\lambda)})\leq C|P_0\Psi_{R(\lambda)}|^{2p}_{\frac{2Np}{N+\alpha}}\leq C\|P_0\Psi_{R(\lambda)}\|^{2p}_{H^1},
 \end{equation*}
 which together with (\ref{e3.1}) implies that
 \begin{equation}\label{e3.2}
 J(P_0\Psi_{R(\lambda)})=O(|b-\lambda|^{2p})~\text{as}~\lambda\to b.
  \end{equation}
Let us define
$$\zeta_\lambda:= (Id - P_0)\Psi_{R(\lambda)} \in E^+$$
By Lemma \ref{J}-(iii), we have
$$J(\zeta_\lambda)=J(\Psi_{R(\lambda)}-P_0\Psi_{R(\lambda)})\geq 2^{1-2p}J(\Psi_{R(\lambda)})-J(P_0\Psi_{R(\lambda)})$$
Since $p<\frac{N+\alpha}{N-2}$, we also have $2p-\frac{Np-N-\alpha}{2}>0$; hence, combining the last inequality with \eqref{lm3.3} and \eqref{e3.2}  yields
 \begin{equation}\label{e3.3}
\liminf\limits_{\lambda\to b}(b - \lambda)^{-\frac{Np-N-\alpha}{2}}J(\zeta_\lambda)\geq 2^{1-2p}[M(|\Psi|^p)]^2J(\eta)>0
\end{equation}
On the other hand, we have from \cite{S1,T} that
\begin{equation}\label{3.6}
 Q_\lambda(\zeta_\lambda) = O(b - \lambda)~~\text{as}~~\lambda\to b^-
 \end{equation}
We are now ready to prove the first part of Theorem \ref{th2}, that is an estimate for the critical level $c_\lambda$ found in \eqref{c}, as $\lambda \to b^-$.
\begin{proposition}\label{pro1}
 $c_\lambda= O((b-\lambda)^{\frac{2p-Np+N+\alpha}{2p-2}})\to0$ as $\lambda\to b^-.$
\end{proposition}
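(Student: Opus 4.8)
The plan is to control the min--max level $c_\lambda$ through its variational characterisation \eqref{c}, namely $c_\lambda\le\sup_{v\in E^-_\lambda\oplus\R^+\zeta_\lambda}\Phi_\lambda(v)$, and to show that \emph{every} positive value taken by $\Phi_\lambda$ on this sector is already $O\big((b-\lambda)^{\frac{2p-Np+N+\alpha}{2p-2}}\big)$. Recall that, since $\lambda$ lies in the spectral gap $(a,b)$, the spectral projection $P_{\lambda,0}$ coincides with $P_0$, so $E^\pm_\lambda=E^\pm$ as vector spaces and in particular $\zeta_\lambda=(Id-P_0)\Psi_{R(\lambda)}\in E^+_\lambda$. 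Writing a generic element of the sector as $v=s\zeta_\lambda+w$, $s\ge0$, $w\in E^-_\lambda$, one has $Q_\lambda(v)=s^2Q_\lambda(\zeta_\lambda)-\|w\|_{E_\lambda}^2$. The basic device I would use is the elementary one--dimensional optimisation: if $Q_\lambda(v)>0$ and $J(v)>0$, then (evaluating $\tfrac{t^2}{2}Q_\lambda(v)-\tfrac{t^{2p}}{2p}J(v)$ at $t=1$ and bounding it by its maximum over $t\ge0$)
$$\Phi_\lambda(v)=\frac12Q_\lambda(v)-\frac{1}{2p}J(v)\le\max_{t\ge0}\Big(\frac{t^2}{2}Q_\lambda(v)-\frac{t^{2p}}{2p}J(v)\Big)=\frac{p-1}{2p}\,\frac{Q_\lambda(v)^{\frac{p}{p-1}}}{J(v)^{\frac{1}{p-1}}},$$
while $\Phi_\lambda(v)\le0$ when $Q_\lambda(v)\le0$, and $v=0$ when $J(v)=0$. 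Since $\sup_v\Phi_\lambda\ge c_\lambda\ge\kappa(\lambda)>0$ by \eqref{ka2}, only the first case contributes to the estimate.

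Next I would bound the numerator from above and the denominator from below. From $Q_\lambda(v)>0$ we get $\|w\|_{E_\lambda}^2\le s^2Q_\lambda(\zeta_\lambda)$, hence $Q_\lambda(v)\le s^2Q_\lambda(\zeta_\lambda)$. For the denominator, Lemma \ref{J}$(iii)$ gives
$$J(v)=J(s\zeta_\lambda+w)\ge 2^{1-2p}s^{2p}J(\zeta_\lambda)-J(w),$$
and, combining \eqref{J1} with the fact that on $E^-_\lambda=E^-$ one has $\|w\|_{H^1}\le C\|w\|_{E_\lambda}$ with a constant independent of $\lambda$ near $b$ (by Proposition \ref{lm1}, where $\alpha_\lambda\equiv\alpha_0$ for $\lambda>0$, only $\beta_\lambda$ degenerating), together with $\|w\|_{E_\lambda}^2\le s^2Q_\lambda(\zeta_\lambda)$, I obtain
$$J(w)\le C\|w\|_{H^1}^{2p}\le C\|w\|_{E_\lambda}^{2p}\le C\,s^{2p}Q_\lambda(\zeta_\lambda)^p,\qquad\text{hence}\qquad J(v)\ge s^{2p}\big(2^{1-2p}J(\zeta_\lambda)-C\,Q_\lambda(\zeta_\lambda)^p\big).$$
Now I insert the estimates already proved: $Q_\lambda(\zeta_\lambda)=O(b-\lambda)$ from \eqref{3.6} and $J(\zeta_\lambda)\ge c_0(b-\lambda)^{\frac{Np-N-\alpha}{2}}$ from \eqref{e3.3}. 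Since $p<\frac{N+\alpha}{N-2}$ is equivalent to $\frac{Np-N-\alpha}{2}<p$, the correction $C\,Q_\lambda(\zeta_\lambda)^p=O((b-\lambda)^p)$ is of strictly higher order than $J(\zeta_\lambda)$, so for $\lambda$ close enough to $b$ one has $J(v)\ge 2^{-2p}s^{2p}J(\zeta_\lambda)$. Plugging $Q_\lambda(v)\le s^2Q_\lambda(\zeta_\lambda)$ and this bound into the displayed inequality, the powers of $s$ cancel and
$$\Phi_\lambda(v)\le C\,\frac{Q_\lambda(\zeta_\lambda)^{\frac{p}{p-1}}}{J(\zeta_\lambda)^{\frac{1}{p-1}}}\le C\,\frac{(b-\lambda)^{\frac{p}{p-1}}}{(b-\lambda)^{\frac{Np-N-\alpha}{2(p-1)}}}=C\,(b-\lambda)^{\frac{2p-Np+N+\alpha}{2p-2}}.$$
Taking the supremum over the sector and using $c_\lambda\le\sup_v\Phi_\lambda$ together with $c_\lambda\ge\kappa(\lambda)>0$ gives $c_\lambda=O\big((b-\lambda)^{\frac{2p-Np+N+\alpha}{2p-2}}\big)$; the exponent is positive, once more by $p<\frac{N+\alpha}{N-2}$, so $c_\lambda\to0$ as $\lambda\to b^-$.

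I expect the most delicate point to be the lower bound for $J(v)$: one must ensure that the contribution $J(w)$ of the $E^-_\lambda$-component does not swamp the leading term $s^{2p}J(\zeta_\lambda)$, which is exactly where the upper Choquard restriction $p<\frac{N+\alpha}{N-2}$ is used (comparison of the rates $(b-\lambda)^{\frac{Np-N-\alpha}{2}}$ and $(b-\lambda)^p$), and where one has to be careful that the embedding constant between $\|\cdot\|_{H^1}$ and $\|\cdot\|_{E_\lambda}$ on $E^-_\lambda$ stays uniform as $\lambda\to b$ — which holds precisely because only the ``$+$'' coercivity constant $\beta_\lambda$ degenerates at $b$, while $\alpha_\lambda$ does not. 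Everything else (the $1$-dimensional maximisation, the use of Lemma \ref{J}$(iii)$, and the reduction via \eqref{c}) is routine.
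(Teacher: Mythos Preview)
Your proof is correct and follows essentially the same approach as the paper: both use the upper bound \eqref{c}, the inequality from Lemma~\ref{J}(iii) to control $J(s\zeta_\lambda+w)$ from below, the constraint $\|w\|_{E_\lambda}^2\le s^2Q_\lambda(\zeta_\lambda)$ coming from positivity of $\Phi_\lambda$ (resp.\ $Q_\lambda$), the uniform coercivity $\alpha_\lambda=\alpha_0$ on $E^-_\lambda$ for $\lambda>0$, and the asymptotics \eqref{e3.3}--\eqref{3.6} together with $\tfrac{Np-N-\alpha}{2}<p$. The only cosmetic difference is that you package the final step as the closed-form bound $\Phi_\lambda(v)\le\frac{p-1}{2p}\,Q_\lambda(v)^{p/(p-1)}J(v)^{-1/(p-1)}$ before inserting the estimates, whereas the paper first reduces to a function of $s$ alone and then performs the same one-dimensional maximisation; the computations and the resulting exponent are identical.
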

\begin{proof}
By \eqref{c}, we have
$$\kappa(\lambda)\leq c_\lambda\leq \sup\limits_{v\in E^-_\lambda, s\geq0} \Phi_\lambda(v+s\zeta_\lambda)
=\sup\limits_{v\in E^-_\lambda, s\geq0}\left[ \frac{1}{2}Q_\lambda(v) + \frac{1}{2} s^2Q_\lambda(\zeta_\lambda) - \frac{1}{2p}J(v + s\zeta_\lambda)\right],$$
where $Q_\lambda(v)\leq0$, and from Lemma \ref{J}-$(iii)$
$$J(v + s\zeta_\lambda) >s^{2p}\left( 2^{1-2p}J(\zeta_\lambda)-J\left(v/s\right)\right). $$
Let us now prove that
\begin{equation}\label{3.21}
J\left(v/s\right)\leq C\alpha_\lambda^{-p}| Q_\lambda(\zeta_\lambda)|^p.
\end{equation}
Indeed, since
$$\sup\limits_{v\in E^-,~s> 0} \Phi_\lambda(v + s\zeta_\lambda)\geq c_\lambda > 0,$$
 we can restrict our attention to the couples $(v, s)$  satisfying $\Phi_\lambda(v + s\zeta_\lambda) \geq 0$ and $s > 0$
Then by Proposition \ref{lm1}, we get
\begin{equation*}\label{3.2}
\Phi_\lambda(v + s\zeta_\lambda)\geq0 \Rightarrow Q_\lambda(v) + s^2Q_\lambda(\zeta_\lambda) \geq 0
\Rightarrow  Q_\lambda(\zeta_\lambda)\geq \alpha_\lambda\left\|v/s\right\|^2_{H^1}.
\end{equation*}
On the other hand, by the Hardy--Littlewood--Sobolev inequality and the Sobolev inequality, we have
\begin{equation*}\label{3.1}
J\left(v/s\right)\leq C(N,p,\alpha)\left|v/s\right|^{2p}_{\frac{2Np}{N+\alpha}} \leq C\left\|v/s\right\|^{2p}_{H^1}
\end{equation*}
Combining the two above inequalities yields \eqref{3.21}.\\
From \eqref{3.21}, we deduce
$$\Phi_\lambda(v+s\zeta_\lambda)\leq\frac{1}{2}s^2 Q_\lambda(\zeta_\lambda)-\frac{1}{2p}s^{2p}\left[2^{1-2p}J(\zeta_\lambda)-C\alpha_\lambda^{-p}|Q_\lambda(\zeta_\lambda)|^p\right]$$
Since
$$\frac{N+\alpha}{N}\leq p<\frac{N+\alpha}{N-2}\Rightarrow0<\frac{Np-N-\alpha}{2}<p,$$
then by \eqref{lm3.3}, \eqref{e3.3} and \eqref{3.6}, for $\lambda$ approaching $b$ sufficiently ($\lambda<b$),
$$L(\zeta_\lambda):=\frac{1}{2p}\left[2^{1-2p}J(\zeta_\lambda)-C\alpha_\lambda^{-p}|Q_\lambda(\zeta_\lambda)|^p\right]>0,$$
and $$L(\zeta_\lambda)=O((b-\lambda)^{\frac{Np-N-\alpha}{2}})$$
Therefore,
\begin{equation*}
 \aligned
c_\lambda\leq\sup\limits_{s>0}\left(\frac{1}{2}s^2 Q_\lambda(\zeta_\lambda)-s^{2p}L(\zeta_\lambda)\right)
=&(p-1)(2p)^{-\frac{2p}{2p-2}}Q_\lambda(\zeta_\lambda)^{\frac{2p}{2p-2}}L(\zeta_\lambda)^{-\frac{2}{2p-2}} \\
=&O((b-\lambda)^{\frac{2p-Np+N+\alpha}{2p-2}})
\endaligned
\end{equation*}
Since
$$\frac{N+\alpha}{N}<p<\frac{N+\alpha}{N-2}\Rightarrow\frac{2p-Np+N+\alpha}{2p-2}>0,$$
we have the final consequence
$$c_\lambda\to 0~\text{as}~\lambda\to b^-.$$
\end{proof}

Let $\{u_n\}$ be a Palais-Smale sequence at level $c_\lambda$ such that $u_n \rightharpoonup u_\lambda$ in $H^1(\R^N)$.
The weak limit $u_\lambda$ is a critical point of $\Phi_\lambda$.
By Lemma \ref{BLN} (the Br\'{e}zis-Lieb lemma of nonlocal type), we have
\begin{equation}\label{cl}
 \aligned
 c_\lambda= \ &  \Phi_\lambda(u_n)-\frac{1}{2}\langle\Phi'_\lambda(u_n),u_n\rangle+o(1) = \frac{p-1}{2p}\int\left(I_\alpha\ast|u_n|^{p}\right)|u_n|^{p}dx+o(1) \\
 = \ & \frac{p-1}{2p}\int\left(I_\alpha\ast|u_\lambda|^{p}\right)|u_\lambda|^{p}dx
 +\frac{p-1}{2p}\int\left(I_\alpha\ast|u_n-u_\lambda|^{p}\right)|u_n-u_\lambda|^{p}dx+o(1)\\
 \geq   \ &\frac{p-1}{2p}\int\left(I_\alpha\ast|u_\lambda|^{p}\right)|u_\lambda|^{p}dx+o(1) \\
 = \ &\Phi_\lambda(u_\lambda)-\frac{1}{2}\langle\Phi'_\lambda(u_\lambda),u_\lambda\rangle+o(1)=\Phi_\lambda(u_\lambda)+o(1).
 \endaligned
\end{equation}
Combining this estimate with Proposition \ref{pro1} we can prove the second part of Theorem \ref{th2}.
\begin{proposition}\label{pro2}
When $\lambda\to b^-$, $\|u_\lambda\|_{H^1} = O(\sqrt{c_\lambda/\beta_\lambda})$,
and in particular
$$\|u_\lambda\|_{H^1} = O((b-\lambda)^{\frac{2-Np+N+\alpha}{4p-4}})\to0,$$
if $\frac{N+\alpha}{N}\leq p < 1 + \frac{2+\alpha}{N}$.
\end{proposition}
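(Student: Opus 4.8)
The plan is to bound $\|u_\lambda\|_{H^1}$ by first controlling $\|u_\lambda\|_{E_\lambda}$ in terms of $c_\lambda$, and then converting to the $H^1$-norm while tracking the spectral constant $\beta_\lambda$, which degenerates as $\lambda\to b^-$. The solution $u_\lambda$ (which, up to a $\mathbb{Z}^N$-translation preserving both $\Phi_\lambda$ and $\|\cdot\|_{H^1}$, is the nontrivial weak limit of the Palais--Smale sequence at level $c_\lambda$) is a critical point with $\Phi_\lambda(u_\lambda)\le c_\lambda$ by \eqref{cl}. Testing $\Phi'_\lambda(u_\lambda)=0$ against $u_\lambda$ gives the Nehari identity $Q_\lambda(u_\lambda)=J(u_\lambda)$, and \eqref{cl} also yields $\frac{p-1}{2p}J(u_\lambda)\le c_\lambda$; hence $Q_\lambda(u_\lambda)=J(u_\lambda)\le\frac{2p}{p-1}c_\lambda$.

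Next I would show the negative component is of strictly lower order. Testing $\Phi'_\lambda(u_\lambda)=0$ against $u_\lambda^-\in E^-_\lambda$ gives $\|u_\lambda^-\|_{E_\lambda}^2=-\int(I_\alpha\ast|u_\lambda|^p)|u_\lambda|^{p-2}u_\lambda u_\lambda^-\,dx$, which is bounded by $J(u_\lambda)^{1-\frac1{2p}}J(u_\lambda^-)^{\frac1{2p}}$ thanks to \eqref{e3.12}. Since $\lambda\to b^-$ forces $\lambda>0$, Proposition \ref{lm1} keeps $\alpha_\lambda=\alpha_0$ uniformly positive, so by \eqref{J1} one has $J(u_\lambda^-)^{\frac1{2p}}\le C\|u_\lambda^-\|_{H^1}\le C\alpha_0^{-1/2}\|u_\lambda^-\|_{E_\lambda}$. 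Therefore $\|u_\lambda^-\|_{E_\lambda}\le C\,J(u_\lambda)^{1-\frac1{2p}}\le C\,c_\lambda^{1-\frac1{2p}}$, and since $p>1$ the exponent $2-\frac1p>1$ and $c_\lambda\to0$, so $\|u_\lambda^-\|_{E_\lambda}^2=o(c_\lambda)$. From $\|u_\lambda^+\|_{E_\lambda}^2=Q_\lambda(u_\lambda)+\|u_\lambda^-\|_{E_\lambda}^2$ it follows that $\|u_\lambda^+\|_{E_\lambda}^2=O(c_\lambda)$, hence $\|u_\lambda\|_{E_\lambda}^2=O(c_\lambda)$.

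To conclude, I would return to the $H^1$-norm via Proposition \ref{lm1}: $\|u_\lambda^+\|_{H^1}^2\le\beta_\lambda^{-1}\|u_\lambda^+\|_{E_\lambda}^2=O(c_\lambda/\beta_\lambda)$, while $\|u_\lambda^-\|_{H^1}^2\le\alpha_0^{-1}\|u_\lambda^-\|_{E_\lambda}^2=o(c_\lambda)=O(c_\lambda/\beta_\lambda)$ because $\beta_\lambda\to0$; adding them, $\|u_\lambda\|_{H^1}=O(\sqrt{c_\lambda/\beta_\lambda})$. Then I would insert the explicit rates: Proposition \ref{pro1} gives $c_\lambda=O((b-\lambda)^{\frac{2p-Np+N+\alpha}{2p-2}})$ and $\beta_\lambda=\beta_0(1-\lambda/b)=\frac{\beta_0}{b}(b-\lambda)$, so $c_\lambda/\beta_\lambda=O((b-\lambda)^{\frac{2-Np+N+\alpha}{2p-2}})$ and $\|u_\lambda\|_{H^1}=O((b-\lambda)^{\frac{2-Np+N+\alpha}{4p-4}})$. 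Since $4p-4>0$, this exponent is positive exactly when $2-Np+N+\alpha>0$, i.e. $p<1+\frac{2+\alpha}{N}$, which is the range in which $\|u_\lambda\|_{H^1}\to0$.

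The main obstacle I anticipate is the second paragraph, namely showing $\|u_\lambda^-\|_{E_\lambda}^2=o(c_\lambda)$. This requires using simultaneously the superquadratic character of $J$ (the hypothesis $p>1$, which makes $2-\frac1p>1$) and the fact that, as $\lambda\to b^-$, only $\beta_\lambda$ collapses while $\alpha_\lambda=\alpha_0$ stays uniformly positive, so that $u_\lambda^-$ can be controlled in the full $H^1$-norm; a naive estimate of $\|u_\lambda\|_{E_\lambda}$ that does not separate $u_\lambda^+$ from $u_\lambda^-$ would lose an extra factor of $\beta_\lambda^{-1/2}$ and fail to give the claimed rate. Once this separation of scales is in place, the rest is bookkeeping with the explicit expressions for $c_\lambda$ and $\beta_\lambda$.
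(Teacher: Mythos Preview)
Your proof is correct and follows essentially the same approach as the paper's: bound $J(u_\lambda)$ by $c_\lambda$ via \eqref{cl}, control the negative component using \eqref{e3.12} together with the uniform positivity of $\alpha_\lambda=\alpha_0$, and then convert to the $H^1$-norm where only $\beta_\lambda$ degenerates. The paper organizes the computation slightly differently---testing $\Phi'_\lambda(u_\lambda)=0$ against $u_\lambda^+-u_\lambda^-$ directly in the $H^1$-norm and applying Young's inequality to absorb $\|u_\lambda^-\|_{H^1}$ in a single step, rather than handling the two components sequentially in the $E_\lambda$-norm first---but the ingredients and the resulting bound \eqref{3.17} are identical to yours.
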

\begin{proof} Let us apply \eqref{cl} to get a relationship between $c_\lambda$ and $u_\lambda$:
\begin{equation*}
 \aligned
c_\lambda\geq \Phi_\lambda(u_\lambda)
=\Phi_\lambda(u_\lambda)-\frac{1}{2}\langle\Phi'_\lambda(u_\lambda),u_\lambda\rangle
=\frac{p-1}{2p}J(u_\lambda)
 \endaligned
\end{equation*}
Decompose $u_\lambda$ as $u_\lambda = u_\lambda^- + u_\lambda^+$ with
$u_\lambda^- \in E^-_\lambda$, $u_\lambda^+\in E^+_\lambda$. Proposition \ref{lm1} and $\Phi'_\lambda(u_\lambda) = 0$ imply that
\begin{multline}\label{3.11}
\beta_\lambda\|u_\lambda^+\|^2_{H^1} + \alpha_\lambda\|u_\lambda^- \|^2_{H^1}\leq Q_\lambda(u_\lambda^+) - Q_\lambda(u_\lambda^-) \\
= \frac{1}{2}\langle Q'_\lambda(u_\lambda), u_\lambda^+-u_\lambda^-\rangle 
=  \int(I_\alpha\ast |u_\lambda|^p)|u_\lambda|^{p-2}u_\lambda(u_\lambda^+-u_\lambda^-)dx\\
= J(u_\lambda)-2\int(I_\alpha\ast |u_\lambda|^p)|u_\lambda|^{p-2}u_\lambda u_\lambda^-dx
\end{multline}
From \eqref{R3} we have
\begin{equation*}
\aligned
\int(I_\alpha\ast|u_\lambda|^p)|u_\lambda|^{p-2}u_\lambda u_\lambda^-dx
\leq C\left[J(u_\lambda)\right]^{1-\frac{1}{2p}} \| u_\lambda^-\|_{H^1},
\endaligned
\end{equation*}
so that, by \eqref{3.11} and Young inequality, we get
\begin{equation}\label{3.17}
 \aligned
\beta_\lambda\|u_\lambda^+\|^2_{H^1} + \frac{\alpha_\lambda}{2}\|u_\lambda^- \|^2_{H^1}
\leq&\frac{2p}{p-1}c_\lambda+Cc_\lambda^{2-\frac{1}{p}}
 \endaligned
\end{equation}
Now let $\lambda\to b^-$: then $\alpha_\lambda=\alpha_0$, $\beta_\lambda=\frac{\beta_0}{b}(b-\lambda)$ and $c_\lambda\to0$.
By \eqref{3.17}, we have
$$\lim\limits_{\lambda\to b^-}\frac{ (b-\lambda)\|u_\lambda\|^2_{H^1}}{c_\lambda} \leq C   $$
Therefore, by Proposition \ref{pro1}, we obtain
$$\|u_\lambda\|_{H^1}= O(\sqrt{c_\lambda/\beta_\lambda})= O((b-\lambda)^{\frac{2-Np+N+\alpha}{4p-4}})$$
Moreover, if $\frac{N+\alpha}{N}\leq p < 1 + \frac{2+\alpha}{N}$, then $\frac{2-Np+N+\alpha}{4p-4}>0$ and
$\lim\limits_{\lambda\to b^-}\|u_\lambda\|_{H^1}=0.$
\end{proof}

Combining Proposition \ref{pro1} and Proposition \ref{pro2}, we have proved the first and the second part part of Theorem \ref{th2}. Now, let us complete the proof, verifying that  $b$ is the only possible gap-bifurcation point for \eqref{Choqeq} in $[a,b]$.

Let $u_\lambda=u^-_\lambda+u^+_\lambda$ be a nontrivial weak solution of \eqref{Choqeq}. Then by testing \eqref{Choqeq} with $u^+_\lambda$ and $u^-_\lambda$, we have
\begin{equation*}
Q_\lambda(u^+_\lambda)=\int(I_\alpha\ast|u_\lambda|^p)|u_\lambda|^{p-2}u_\lambda u_\lambda^+\quad \hbox{and} \quad 
Q_\lambda(u^-_\lambda)=\int(I_\alpha\ast|u_\lambda|^p)|u_\lambda|^{p-2}u_\lambda u_\lambda^-
\end{equation*}
which implies directly
\begin{equation*}
 \aligned
Q_\lambda(u^+_\lambda)-Q_\lambda(u^-_\lambda)=\frac{1}{2p}\langle J'(u_\lambda), u_\lambda^+-u^-_\lambda\rangle=\frac{1}{2p}\langle J'(u_\lambda), 2u_\lambda^+-u_\lambda\rangle
 \endaligned
\end{equation*}
Since $J : H^1(\R^N) \mapsto\R$ is even and convex, $\langle J'(u_\lambda), 2u_\lambda^+-u_\lambda\rangle\leq J(2u^+_\lambda)-J(u_\lambda).$
Then, by Hardy--Littlewood--Sobolev inequality and Sobolev inequality, we get
\begin{equation*}
 \aligned
Q_\lambda(u^+_\lambda)-Q_\lambda(u^-_\lambda)
\leq\frac{1}{2p} J(2u^+_\lambda)-\frac{1}{2p}J(u_\lambda)\leq \frac{2^{2p}}{2p} J(u_\lambda^+)
\leq\frac{2^{2p}}{2p} \|u_\lambda^+\|^{2p}_E
 \endaligned
\end{equation*}
By Proposition \ref{lm1}, we have
\begin{equation*}
 \aligned
\beta_\lambda\|u^+_\lambda\|^2_{H^1}+\alpha_\lambda\|u^-_\lambda\|^2_{H^1}\leq \frac{2^{2p}}{2p}\|u^+_\lambda\|^{2p}_{H^1} \quad \Longrightarrow \quad  \|u^+_\lambda\|^{2p-2}_{H^1}\geq \frac{2p\beta_\lambda}{2^{2p}}
  \endaligned
\end{equation*}
Therefore, by the definition of $\beta_\lambda$,  $b$ is the only possible gap-bifurcation point for \eqref{Choqeq} in $[a,b]$.

{\section{The case $\lambda= a$: existence of $H^1_{loc}$-solutions. }}

In this section we focus on the most delicate case, that is, when $\lambda=a$, the right borderline point of the spectrum of our Schr\"{o}dinger operator. We will prove Theorem \ref{th3}.

In Section 2 we defined the space $E_{\mathcal Q}$ and proved some of its properties. Although $J$ is well-defined on $E_{\mathcal Q}$, $E_{\mathcal Q}$ is not a Hilbert space and, due to the location of $\lambda$, $\Phi_a$ does not present a linking structure as required by the generalized Linking Theorem \ref{LT} (in particular, condition (ii) is not verified). To overcome this problem, we use an approximation argument like \cite{BD}.
For each $j\in \mathbb{N}$ we set
$$E_j^- := P_{a,-1/j}L^2(\R^N) = P_{a,-1/j}L^-_a\subset L^-_a \subset E^-_a$$
and
$$E_j:= E_j^- \oplus E^+_a \subset E_a$$
Then the spectrum of $S_a$ restricted to each  $E_j$ is bounded away from $0$, and we have
$$ \|\cdot\|_{E_a} \sim \|\cdot\|_{H^1}~\text{on}~E_j$$
Let
$$Q_j := P_{a,-1/j} + (Id - P_{a,0}) : E_a \mapsto E_j$$
denote the orthogonal projection. Then we have for any $u \in H^1(\R^N)$:
$$ Q_ju \to u~\text{as}~j \to\infty,~\text{with~respect~to}~\|\cdot\|_{E_a} ~\text{and}~ | \cdot |_t,~2 \leq t < 2^*$$

For each $j\in\mathbb{N}$, let $\Phi_j:=\Phi_a|_{E_j}$, $J_j:=J|_{E_j}$, where $E_j=E_j^-\oplus E^+_a$, $E^-_j=P_{a,-1/j}L^2(\R^N)$.
Obviously, $\Phi_j, J_j\in C^1(E_j,\R)$ and for $u,v\in E_j$,
\begin{equation*}
 \aligned
&\langle J_j'(u),v\rangle=\int(I_\alpha\ast |u|^p)|u|^{p-2}uvdx, \\
&\langle\Phi'_j(u),v\rangle=\langle S_au,v\rangle-\int(I_\alpha\ast |u|^p)|u|^{p-2}uvdx
  \endaligned
\end{equation*}

\begin{definition} A sequence $\{u_n\}_{n\in\mathbb{N}}$ is said to be a $\widetilde{(PS)}_c$-sequence for $\Phi_a$ with
respect to $(E_{j_n}, \|\cdot\|_{E_a})$, some $c \in \R$, if\\
(i) $u_n \in E_{j_n}$ with $j_n \to\infty $ as $n \to\infty$; \\
(ii) $\Phi_a(u_n ) \to c$ as $n \to\infty$; \\
(iii) $ \|\Phi'_{j_n}(u_n)\|_{E_a} \to 0$ as $n\to\infty$.
\end{definition}
Let us first prove the boundedness of the $\widetilde{(PS)}_c$-sequences.
\begin{lemma}\label{lm4.1}
 If $\{u_n\}$ is a $\widetilde{(PS)}_c$-sequence for $\Phi_a$, then $\|u_n\|_{E_a}$ and $\|u_n\|_{\mathcal Q^{\alpha, p}}$ are
bounded or equivalently, $\|u_n\|_{E_\mathcal Q}$ is bounded.
\end{lemma}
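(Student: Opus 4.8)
The plan is to bootstrap on the two quantities $t_n:=\|u_n\|_{E_a}$ and $s_n:=\|u_n\|_{\mathcal Q^{\alpha,p}}$ (so that $s_n^{2p}=J(u_n)$), using that a $\widetilde{(PS)}_c$-sequence satisfies $\langle\Phi'_{j_n}(u_n),v\rangle=\langle S_au_n,v\rangle-\int(I_\alpha\ast|u_n|^p)|u_n|^{p-2}u_nv$ with $\|\Phi'_{j_n}(u_n)\|_{E_a}=o(1)$, and that the nonlocal term is controlled \emph{solely} by $J$ through \eqref{R3}. First I would fix $n$ large so that $\Phi_a(u_n)\le c+1$ and $\|\Phi'_{j_n}(u_n)\|_{E_a}\le 1$; testing $\Phi'_{j_n}(u_n)$ against $u_n$ and using the $2p$-homogeneity of $J$ together with the quadratic homogeneity of $Q_a$ gives
\[
\tfrac{p-1}{2p}J(u_n)=\Phi_a(u_n)-\tfrac12\langle\Phi'_{j_n}(u_n),u_n\rangle\le |c|+1+o(1)\,t_n,
\]
that is, $s_n^{2p}\le C_1+o(1)\,t_n$.

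Next I would estimate the spectral components of $u_n=u_n^++u_n^-$ ($u_n^+\in E_a^+$, $u_n^-\in E_{j_n}^-$) separately. Testing against $u_n^+$ and using that on $E_a^+$ the norms $\|\cdot\|_{E_a}$, $\|\cdot\|_{H^1}$ and $\|\cdot\|_{\mathcal Q^{\alpha,p}}$ are equivalent with constants \emph{not depending on} $n$ (Lemma \ref{norm}), one gets
\[
\|u_n^+\|_{E_a}^2\le o(1)\|u_n^+\|_{E_a}+J(u_n)^{1-\frac1{2p}}J(u_n^+)^{\frac1{2p}}\le o(1)\|u_n^+\|_{E_a}+C\,s_n^{2p-1}\|u_n^+\|_{E_a},
\]
hence $\|u_n^+\|_{E_a}\le o(1)+C\,s_n^{2p-1}$. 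Testing against $u_n^-$ gives, by \eqref{R3}, $\|u_n^-\|_{E_a}^2\le o(1)\|u_n^-\|_{E_a}+J(u_n)^{1-\frac1{2p}}\|u_n^-\|_{\mathcal Q^{\alpha,p}}$, so the only missing piece is a bound on $\|u_n^-\|_{\mathcal Q^{\alpha,p}}$; here one \emph{cannot} pass through the $H^1$-norm, because on $E_{j_n}^-$ the equivalence constant in $\|\cdot\|_{E_a}\sim\|\cdot\|_{H^1}$ blows up as $j_n\to\infty$. The remedy, and the genuinely delicate point, is to use that $\|\cdot\|_{\mathcal Q^{\alpha,p}}$ is a bona fide norm (Lemma \ref{lm4.4}), so by the triangle inequality $\|u_n^-\|_{\mathcal Q^{\alpha,p}}\le\|u_n\|_{\mathcal Q^{\alpha,p}}+\|u_n^+\|_{\mathcal Q^{\alpha,p}}\le s_n+C\|u_n^+\|_{E_a}\le s_n+o(1)+C\,s_n^{2p-1}$; inserting this and using $p>1$ to absorb the lower-order powers yields $\|u_n^-\|_{E_a}\le o(1)+C\,s_n^{2p-1}$ as well.

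Finally I would combine the estimates: $t_n\le\|u_n^+\|_{E_a}+\|u_n^-\|_{E_a}\le o(1)+C\,s_n^{2p-1}$, and substituting into $s_n^{2p}\le C_1+o(1)\,t_n$ gives $s_n^{2p}\le C_1+o(1)+o(1)\,s_n^{2p-1}$. Since $2p-1<2p$, if $s_n\to\infty$ along a subsequence, dividing by $s_n^{2p-1}$ would force $s_n\to 0$, a contradiction; hence $s_n$, and then $t_n$, are bounded, i.e. $\|u_n\|_{E_\mathcal Q}=\|u_n\|_{E_a}+\|u_n\|_{\mathcal Q^{\alpha,p}}$ is bounded. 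I expect the main obstacle to be precisely the control of the negative part $u_n^-$: it is here that the choice of the Coulomb-type norm in the construction of $E_\mathcal Q$ (rather than a Lebesgue norm, cf.\ Remark \ref{remarkMMV}) is essential, since $J$ — but no $L^\mu$-norm — furnishes the needed bound on $\|u_n^-\|_{\mathcal Q^{\alpha,p}}$ via the triangle inequality, after which the sub-homogeneous exponent $\tfrac{2p-1}{2p}<1$ closes the bootstrap.
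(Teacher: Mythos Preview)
Your argument is correct, but the route you take for the negative part $u_n^-$ is more laborious than the paper's. The paper handles $\|u_n^-\|_{E_a}$ in one line from the very structure of the functional: since $J\ge 0$,
\[
\Phi_a(u_n)=\tfrac12\|u_n^+\|_{E_a}^2-\tfrac12\|u_n^-\|_{E_a}^2-\tfrac1{2p}J(u_n)\le \tfrac12\|u_n^+\|_{E_a}^2-\tfrac12\|u_n^-\|_{E_a}^2,
\]
so $\|u_n^-\|_{E_a}^2\le \|u_n^+\|_{E_a}^2-2\Phi_a(u_n)$. Combined with your Step~2 (which the paper writes as $\|u_n^+\|_{E_a}^2\le C(1+\|u_n\|_{E_a})^{2-1/p}$ after substituting the first estimate), this closes the bootstrap directly in the single variable $t_n=\|u_n\|_{E_a}$ via $t_n^2\le C+C(1+t_n)^{2-1/p}$, and boundedness of $s_n$ follows \emph{a posteriori} from the first inequality. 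No separate testing against $u_n^-$, and no triangle inequality for $\|\cdot\|_{\mathcal Q^{\alpha,p}}$, is needed.

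So your identification of ``the genuinely delicate point'' is somewhat off: the paper never needs to bound $\|u_n^-\|_{\mathcal Q^{\alpha,p}}$ at all. The relevance of the Coulomb norm (Remark~\ref{remarkMMV}) is at a more basic level---it is what makes the first identity $\Phi_a(u_n)-\tfrac12\langle\Phi'_{j_n}(u_n),u_n\rangle=\tfrac{p-1}{2p}\|u_n\|_{\mathcal Q^{\alpha,p}}^{2p}$ directly useful---not in the control of $u_n^-$. Your approach does work and has the minor virtue of treating $u_n^+$ and $u_n^-$ symmetrically, but the paper's use of $J\ge 0$ is shorter and avoids the extra square-root/absorption step you need after inserting $\|u_n^-\|_{\mathcal Q^{\alpha,p}}\le s_n+C\|u_n^+\|_{E_a}$ (where, strictly, you end up with $\|u_n^-\|_{E_a}\le C(1+s_n^{2p-1})$ rather than $o(1)+Cs_n^{2p-1}$, though this makes no difference for the conclusion).
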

\begin{proof} Let $n$ large such that $\Phi_a(u_n)\leq c+1$ and $\|\Phi'_{j_n}(u_n)\|_{E_a}\leq1$,
then
\begin{equation}\label{40}
 \aligned
c+1+\frac{1}{2}\|u_n\|_{E_a}\geq\Phi_a(u_n)-\frac{1}{2}\langle\Phi'_{j_n}(u_n),u_n\rangle=\left(\frac{1}{2}-\frac{1}{2p}\right)|u_n|_{HL}^{2p}
  \endaligned
\end{equation}
Note that by \eqref{e3.12}, we have
\begin{equation}\label{42}
 \aligned
\left|\int(I_\alpha\ast|u_n|^p)|u_n|^{p-2}u_nu_n^+dx\right|\leq \|u_n\|_{\mathcal Q^{\alpha,p}}^{2p-1}\|u_n^+\|_{\mathcal Q^{\alpha,p}}
  \endaligned
\end{equation}
Thus, by \eqref{40}-\eqref{42}, we have
\begin{equation}\label{43}
 \aligned
\|u_n^+\|_{E_a}^2=&\langle\Phi'_{j_n}(u_n),u_n^+\rangle+\int(I_\alpha\ast|u_n|^p)|u_n|^{p-2}u_nu_n^+dx \\
\leq&1\cdot\|u_n^+\|_{E_a}+\|u_n\|_{\mathcal Q^{\alpha,p}}^{2p-1}\|u_n^+\|_{\mathcal Q^{\alpha,p}}\\
\leq&\|u_n^+\|_{E_a}+[C(1+\|u_n\|_{E_a})]^{1-\frac{1}{2p}}\|u_n^+\|_{\mathcal Q^{\alpha,p}}
  \endaligned
\end{equation}
Since $\|u_n^+\|_{E_a}$ and $\|u_n^+\|_{H^1}$ are equivalent for $u_n^+\in E^+_a$, we have
$$\|u_n^+\|_{\mathcal Q^{\alpha,p}}\leq C|u_n^+|_{\frac{2Np}{N+\alpha}}\leq C\|u_n^+\|_{H^1}\leq C\|u_n^+\|_{E_a}$$
Then by \eqref{43}, we have
\begin{equation*}
\|u_n^+\|_{E_a}^2\leq \|u_n^+\|_{E_a}+[C(1+\|u_n\|_{E_a})]^{1-\frac{1}{2p}}\|u_n^+\|_{E_a} \quad \Longrightarrow \quad \|u_n^+\|_{E_a}^2\leq [C(1+\|u_n\|_{E_a})]^{2-\frac{1}{p}},
\end{equation*}
which together with
$$ \| u^-_n\|^2_{E_a} \leq -2\Phi_a(u_n) + \| u^+_n\|^2_{E_a}$$
implies that
\begin{equation*}
 \aligned
\|u_n\|_{E_a}^2=\| u^+_n\|^2_{E_a}+\| u^-_n\|^2_{E_a}\leq C +[C(1+\|u_n\|_{E_a})]^{2-\frac{1}{p}}
  \endaligned
\end{equation*}
Since $2-\frac{1}{p}<2$, $\|u_n\|_{E_a}$ is bounded, hence,
applying \eqref{40} once more we obtain that $\|u_n\|_{\mathcal Q^{\alpha,p}}$ is bounded.
\end{proof}
Let us note that $\Phi_j \in C^1(E_j, \R)$ has the form $\Phi_j(u) = \frac{1}{2}\langle S_au, u\rangle_{E_a}
-J(u)$. From Lemma \ref{J} and the fact that $\|\cdot\|_{E_a}$ and $\|\cdot\|_{H^1}$ are equivalent on $E_j$, we can deduce that 
$J\in C^1(E_j, \R)$ is bounded below, weakly sequentially lower semicontinuous
and $\nabla_{E_a}J: E_j \mapsto {E_j}$ is weakly sequentially continuous.
Obviously, the functional $\Phi_j$ satisfies the conditions $(i)-(ii)$ in Theorem \ref{LT}. Following the same lines as for the proofs of Lemma \ref{lm2.2} and Lemma \ref{lm2.3}, we can verify that the functional $\Phi_j$ satisfies the linking structure, that is, conditions $(iii)-(iv)$ in Theorem \ref{LT} as stated in the following
\begin{lemma}\label{lm2}
There exist $\kappa$, $\rho > 0$ such that for any $u \in S^+_\rho:= E^+_a\cap \partial B_\rho(0)$ it results that
$\inf\Phi_j(S^+_\rho):=\kappa >0$.
\end{lemma}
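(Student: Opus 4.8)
The plan is to mimic the proof of Lemma~\ref{lm2.2}, exploiting the crucial fact that the borderline value $\lambda=a$ causes no loss of regularity on the positive subspace: since the spectrum of $S_a$ restricted to $L^+_a$ lies in $[b-a,+\infty)$, the norms $\|\cdot\|_{E_a}$, $\|\cdot\|_{H^1}$ and $\|\cdot\|_{E_\mathcal Q}$ are all equivalent on $E^+_a$ (Lemma~\ref{norm}). In particular there is a constant $\beta>0$, independent of $j$, such that $\|v\|_{H^1}^2 \leq \beta^{-1}\langle S_a v,v\rangle$ for every $v\in E^+_a$.

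First I would note that any $u \in S^+_\rho \subset E^+_a$ satisfies $u^-=0$, so that $\Phi_j(u) = \tfrac12\langle S_a u,u\rangle - J(u) = \tfrac12\|u\|_{E_a}^2 - J(u)$. Next, combining the Hardy--Littlewood--Sobolev inequality \eqref{HLS} with the Sobolev embedding exactly as in \eqref{J1}, and then using the norm equivalence above, I would estimate $J(u) \leq C|u|_{\frac{2Np}{N+\alpha}}^{2p} \leq C\|u\|_{H^1}^{2p} \leq C\beta^{-p}\|u\|_{E_a}^{2p}$, with $C=C(N,\alpha,p)$. Hence $\Phi_j(u) \geq \tfrac12\|u\|_{E_a}^2 - C\beta^{-p}\|u\|_{E_a}^{2p}$ for every $u\in E_j$ lying in $E^+_a$.

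Finally, since $p>1$, choosing $\rho>0$ small enough — for instance $\rho = \big(\beta^p/(4C)\big)^{1/(2p-2)}$ — makes the right-hand side strictly positive on $\{\|u\|_{E_a}=\rho\}$ and bounded below there by an explicit constant $\kappa>0$; both $\rho$ and $\kappa$ depend only on $N,\alpha,p$ and $\beta$, hence are independent of $j$, which gives $\inf\Phi_j(S^+_\rho)\geq\kappa>0$, the claim. There is no serious obstacle here: the argument is a routine adaptation of Lemma~\ref{lm2.2}, and the only point deserving a remark is the uniformity of $\kappa,\rho$ in $j$, which is automatic because $S^+_\rho\subset E^+_a$ is the same set for all $j$ and none of the estimates ever touch the subspaces $E^-_j$.
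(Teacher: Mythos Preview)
Your approach is exactly what the paper intends: it explicitly says the proof follows the same lines as Lemma~\ref{lm2.2} and omits the details, and you have correctly identified the key point (the equivalence of $\|\cdot\|_{E_a}$ and $\|\cdot\|_{H^1}$ on $E^+_a$, which makes $\kappa,\rho$ independent of $j$). One small slip: the functional is $\Phi_j(u)=\tfrac12\langle S_a u,u\rangle-\tfrac{1}{2p}J(u)$, not $\tfrac12\langle S_a u,u\rangle-J(u)$; the missing factor $\tfrac{1}{2p}$ is harmless for the argument since it only shifts the constants in your choice of $\rho$ and $\kappa$.
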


\begin{lemma}\label{lm3}
Let $Z_0$ be a finite dimensional subspace of $E^+_a$. Then $\Phi_j(u) \to -\infty$ as
$\|u\|_{E_a}\to\infty$ in $E^-_j\oplus Z_0$.
\end{lemma}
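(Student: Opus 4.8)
The plan is to adapt, essentially verbatim, the proof of Lemma \ref{lm2.3} (which follows \cite[Lemma 4.2]{A}) to the subspace $E_j$ with $\lambda=a$. Fix $j\in\mathbb N$. Since the spectrum of $S_a$ restricted to $E_j$ is bounded away from $0$, the norms $\|\cdot\|_{E_a}$ and $\|\cdot\|_{H^1}$ are equivalent on $E_j$; moreover, writing $u=u^-+u^+\in E_j^-\oplus E_a^+$, we have $Q_a(u)=\|u^+\|_{E_a}^2-\|u^-\|_{E_a}^2$ and $\Phi_j(u)=\tfrac12 Q_a(u)-\tfrac1{2p}J(u)$. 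The only structural difference from Lemma \ref{lm2.3} is that the positive/negative splitting is now $E_a^+\oplus E_j^-$, and that we will be minimizing $J$ over a bounded subset of the weaker-normed space $(E_j,\|\cdot\|_{E_a})$ rather than of $H^1$; the point will be that, $j$ being fixed, this weaker norm is still equivalent to the $H^1$-norm on $E_j$.

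First I would introduce the ``linking cone'' exactly as before. For $\beta\in(0,1)$ set $\gamma=\sin(\arctan\beta)\in(0,1)$ and
$$K:=\{\,u\in E_j:\ u^+\in Z_0,\ \|u^+\|_{E_a}\ge\gamma,\ \|u\|_{E_a}=1\,\},$$
and let $\{u_n\}\subset K$ be a minimizing sequence for $\delta:=\inf_K J\ge0$. Because $K$ is bounded in $E_a$ and (for the fixed $j$) $\|\cdot\|_{E_a}\sim\|\cdot\|_{H^1}$ on $E_j$, $\{u_n\}$ is bounded in $H^1(\R^N)$; passing to a subsequence $u_n\rightharpoonup u$ in $H^1(\R^N)$, and since $Z_0$ is finite dimensional $u_n^+\to u^+$ in $Z_0$, so $\|u^+\|_{E_a}\ge\gamma$ and in particular $u\ne0$. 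By the weak sequential lower semicontinuity of $J$ (Lemma \ref{J}$(i)$), $J(u)\le\liminf_n J(u_n)=\delta$; since $J(u)=\|u\|_{\mathcal Q^{\alpha,p}}^{2p}$ with $\|\cdot\|_{\mathcal Q^{\alpha,p}}$ a norm (Lemma \ref{lm4.4}), we get $\delta\ge J(u)>0$. I would then close with the standard dichotomy: for $u\in E_j^-\oplus Z_0$ with $\|u\|_{E_a}\ge1$, if $\|u^+\|_{E_a}/\|u^-\|_{E_a}\ge\beta$ then $\|u^+\|_{E_a}/\|u\|_{E_a}=\sin\arctan(\|u^+\|_{E_a}/\|u^-\|_{E_a})\ge\gamma$, so $u/\|u\|_{E_a}\in K$, and by the $2p$-homogeneity of $J$, $J(u)\ge\delta\|u\|_{E_a}^{2p}$, whence $\Phi_j(u)\le\tfrac12\|u\|_{E_a}^2-\tfrac{\delta}{2p}\|u\|_{E_a}^{2p}$; whereas if $\|u^+\|_{E_a}/\|u^-\|_{E_a}<\beta$ then $\Phi_j(u)\le\tfrac12 Q_a(u)\le-\tfrac{1-\beta^2}{2(1+\beta^2)}\|u\|_{E_a}^2$. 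In either case $\Phi_j(u)\to-\infty$ as $\|u\|_{E_a}\to\infty$ in $E_j^-\oplus Z_0$, since $\beta^2<1$ and $\delta>0$.

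There is no serious obstacle here: the argument is a carbon copy of Lemma \ref{lm2.3}. The only two points requiring a moment's attention are (a) that bounded subsets of $(E_j,\|\cdot\|_{E_a})$ are bounded in $H^1$, which holds because for a \emph{fixed} $j$ the two norms are equivalent on $E_j$ (no uniformity in $j$ is needed or claimed in this lemma), and (b) that $J$ vanishes only at $0$, so that $u\ne0$ forces $\delta>0$; this is exactly the statement that $\|\cdot\|_{\mathcal Q^{\alpha,p}}$ is a genuine norm, recorded in Lemma \ref{lm4.4}. The trivial case $Z_0=\{0\}$ is immediate, since then $\Phi_j|_{E_j^-}\le-\tfrac12\|\cdot\|_{E_a}^2\to-\infty$.
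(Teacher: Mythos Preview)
Your proposal is correct and takes essentially the same approach as the paper: the authors explicitly state that Lemma~\ref{lm3} is proved ``following the same lines as for the proofs of Lemma~\ref{lm2.2} and Lemma~\ref{lm2.3}'' and omit the details. Your careful remarks on why bounded sets in $(E_j,\|\cdot\|_{E_a})$ are bounded in $H^1$ (norm equivalence for fixed $j$) and why $J(u)>0$ for $u\ne0$ are exactly the minor adaptations needed.
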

%
For the sake of brevity, we omit the two proofs.

Setting $X := E_j$, $Y := E_j^-$ and $Z := E^+_a$, by Lemma \ref{lm2} and Lemma \ref{lm3}, $\Phi_j$ satisfies all the assumptions of Theorem \ref{LT}. Consequently, for any $j$ there exists a sequence $\{v^j_m\}_{m\in\mathbb{N}}$
in $E_j$ such that $\Phi'_j(v^j_m) \to 0$ and $\Phi_j(v^j_m) \to c_j \in [\kappa, \sup\Phi_j(M)]$ as $m \to\infty$, where
$\kappa>0$ is defined in Lemma \ref{lm2}, and
 $M$ is defined as
 $$M := \{u = u^- + s\zeta : u^- \in E^-_j, s \geq 0~~\text{and}~~\|u\|_{E_a}\leq R\}$$
For $m(j)$ large we therefore have
$$\|\Phi'_j(v^j_m(j))\|_{E_a} + |c_j - \Phi_j(v^j_{m(j)})| < \frac{1}{j}.$$
Since $$\sup\limits_{M}\Phi_j(u)\leq \frac{1}{2}\sup\limits_{M}\left[\|u^+\|_{E_a}^2-\|u^-\|_{E_a}^2\right]\leq \frac{1}{2}\sup\limits_{M}\left[\|u\|_{E_a}^2\right]\leq\frac{1}{2}R^2,$$
there is a subsequence $c_{j_n}$ such that  $c_{j_n} \to c \in [\kappa, \frac{1}{2}R^2]$. The sequence $u_n:= v^j_{m(j_n)}$
is then a $\widetilde{(PS)}_c$-sequence as required. By Lemma \ref{lm4.1}, $\{u_n\}$ is bounded in $E_{\mathcal Q}$.  Since $E_{\mathcal Q}$ is a reflexive Banach space by Lemma \ref{lm4.4}, up to a subsequence we have $u_n\rightharpoonup u$ in $E_{\mathcal Q}$.

Let us now show that $u\neq0$. We claim that for any $r > 0$ there exists a sequence $\{y_n\}$ in $\R^N$ and $\delta> 0$ such that
\begin{equation}\label{e4.6}
\liminf\limits_{n\to\infty}\int_{B_r(y_n)}u_n^2dx\geq\delta.
\end{equation}
Indeed, if not, then by Lions' concentration compactness principle \cite[Lemma 1.21]{Willem},
$$u_n \to 0~\text{in}~ L^q(\R^N)~\forall  q\in(2, 2^*),\quad \text{as}~n\to\infty.$$
Then, by Hardy-Littlewood-Sobolev inequality, we have
\begin{equation}\label{e4.7}
J(u_n)\leq C|u_n|^{2p}_{\frac{2Np}{N+\alpha}}\to0,\quad \text{as}~n\to\infty.
\end{equation}
On the other hand, we have
$$0<c=\Phi_j(u_n)-\frac{1}{2}\langle\Phi'_j(u_n),u_n\rangle+o(1)=\left(\frac{1}{2}-\frac{1}{2p}\right)J(u_n)+o(1),$$
which contradicts \eqref{e4.7}. Thus, \eqref{e4.6} holds. Now we choose $k_n \in \mathbb{Z}^N$ such that
$$|k_n - y_n | = \min \{|k- y_n | : k \in \mathbb{Z}^N\} $$	 and
set $v_n:= u_n(\cdot+k_n)$. Using \eqref{e4.6} and the invariance of ${E_{j}}_n
, E^\pm$ under the action of $\mathbb{Z}^N$ we see that $v_n\in {E_{j}}_n$ and
\begin{equation}\label{e4.8}
\int_{B_{r+\sqrt{N}/2}(0)}v_n^2dx\geq\frac{\delta}{2}.
\end{equation}Moreover, $\|v_n\|_{E_a} = \|u_n\|_{E_a}$ and $\|v_n\|_{\mathcal Q^{\alpha, p}} = \|u_n\|_{\mathcal Q^{\alpha,p}}$, hence $\|v_n\|_{E_\mathcal Q}$ is bounded. Lemma \ref{lm4.4} yields the existence of a subsequence (which we continue to denote by $\{v_n\}$)
such that $v_n \rightharpoonup u$ weakly in $E_{\mathcal Q}$. Then by Lemma \ref{lm4.8}, $v_n \to u$ strongly in $L^2_{loc}(\R^N)$. Clearly \eqref{e4.8} implies $u \neq 0$.

Let $v \in C_0^\infty(\R^N)$ be any test function. By Hardy-Littlewood-Sobolev inequality and H\"{o}lder inequality, we see that
\begin{equation*}
 \aligned
\int(I_\alpha\ast|v_n|^p)|v_n|^{p-2}v_n(Id - {Q_j}_n )v dx
\leq &|v_n^p|_{\frac{2N}{N+\alpha}} |v_n^{p-1}(Id - {Q_j}_n )v |_{\frac{2N}{N+\alpha}}\\
\leq &|v_n|^p_{\frac{2Np}{N+\alpha}} |v_n|^{p-1}_{\frac{2Np}{N+\alpha}}|(Id - {Q_j}_n )v|^p_{\frac{2Np}{N+\alpha}}
  \endaligned
  \end{equation*}
The right hand side converges to $0$ as $n \to\infty$. Now
\begin{equation*}
 \aligned
&\langle S_av_n, v\rangle_{E_a} =\langle S_av_n, {Q_j}_nv\rangle_{E_{a}} \\
&= \langle \Phi'_a(v_n), {Q_j}_nv\rangle + \int (I_\alpha\ast|v_n|^p)|v_n|^{p-2}v_nv dx
-\int(I_\alpha\ast|v_n|^p)|v_n|^{p-2}v_n(Id - {Q_j}_n )v dx
  \endaligned
  \end{equation*}
and therefore, letting $n \to\infty$, we have
$$\int(\nabla u \cdot \nabla v + (V (x)-a)uv) dx = \langle S_au, v\rangle_{E_a} = \int (I_\alpha\ast|u|^p)|u|^{p-2}uv dx $$
This shows that $u$ is a weak solution for \eqref{Choqeq}.

We end this section by proving the multiplicity result for \eqref{Choqeq}: it will be a consequence of Theorem 4.2 in \cite{BD}, see also \cite{A}. Let us first recall the definition of $(PS)_I$-attractor:
\begin{definition}
	Let $\Phi: X\mapsto\R$, denote $\Phi^b_a = \{ u \in X: a \leq \Phi(u) \leq b \}$.
	Given an interval $I \subset \R$, call a set $\mathcal{A} \subset X$ a $(PS)_I$-attractor if for any $(PS)_c$-sequence $\{u_n\}$ with $c \in I$, and any $\varepsilon, \delta > 0$ one has
	$u_n \in U_\varepsilon(\mathcal{A} \cap \Phi^{c+\delta}_{c-\delta})$ provided $n$ is large enough.
\end{definition}

\begin{theorem}\label{th4.6} (\cite{BD}). Let $X$ be a reflexive Banach space with the direct
sum decomposition $X = X^-\oplus X^+$, $u = u^- +u^+$ for $u \in X$, and suppose that
$X^-$ is separable.
If $\Phi$ satisfies the following hypotheses:
\begin{itemize}
\item[$(\Phi_1)$] $\Phi\in C^1(X, \R)$ is even and $\Phi(0) = 0$.
\item[$(\Phi_2)$] There exist $\kappa, \rho> 0$ such that $\Phi(z) \geq \kappa$ for every $z \in X^+$ with $\|z\|_X = \rho$.
\item[$(\Phi_3)$] There exists a strictly increasing sequence of finite-dimensional subspaces
$Z_n \subset X^+$ such that $\sup \Phi(X_n) < \infty$ where $X_n := X^- \oplus Z_n$, and an increasing sequence of real numbers $r_n > 0$ with $\Phi(X_n\setminus B_{r_n} ) < \inf \Phi(B_\rho)$.
\item[$(\Phi_4)$] $\Phi(u)\to-\infty$ as $\|u^-\|_X\to\infty$ and $\|u^+\|_X$ bounded.
\item[$(\Phi_5)$] $\Phi': X_w^-\oplus X^+\to X_w^*$ is sequentially continuous, and $\Phi: X_w^- \oplus X^+ \to \R$ is
sequentially upper semi-continuous, where $X_w^-$ denote the space $X^-$ with the weak topology.
\item[$(\Phi_6)$] For any compact interval $I \subset (0,\infty)$ there exists a $(PS)_I-$attractor $\mathcal{A}$ such
that $$\inf\{ \|u^+ - v^+\|_X : u, v \in\mathcal{A},~u^+ \neq v^+ \} > 0.$$
\end{itemize}
Then there exists an unbounded sequence $(c_n)$ of positive critical values.
\end{theorem}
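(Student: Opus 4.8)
This is Theorem 4.2 of \cite{BD}, an abstract result in critical point theory for even, \emph{strongly indefinite} functionals, so my plan is essentially to reconstruct the argument of Kryszewski--Szulkin / Bartsch--Ding. Two features rule out the classical symmetric minimax scheme: $X^-$ is infinite dimensional, so the usual deformation lemma and compactness fail in the norm topology, and the Palais--Smale condition does not hold. The three ingredients I would put in place are: a weak-type topology $\tau$ on $X$ adapted to the splitting $X=X^-\oplus X^+$; a $\tau$-deformation lemma in which $(\Phi_6)$ plays the role of $(PS)$; and an even minimax scheme built from the Krasnoselskii genus $\gamma$ together with a Benci-type pseudo-index.

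First I would define $\tau$ by declaring $u_n\to_\tau u$ when $u_n^-\rightharpoonup u^-$ weakly in $X^-$ and $u_n^+\to u^+$ in norm. Since $X$ is reflexive and $X^-$ is separable, $\tau$ is metrizable on norm-bounded sets and bounded sequences are $\tau$-precompact, so sequential arguments are legitimate. Hypothesis $(\Phi_5)$ is precisely the statement that $\Phi$ is $\tau$-sequentially upper semicontinuous and $\Phi'$ is $\tau$-to-weak$^*$ sequentially continuous, which is exactly what makes the (pseudo-)gradient flow of $\Phi$ well behaved in the $\tau$-topology; and $(\Phi_4)$, combined with $\sup\Phi(X_n)<\infty$ from $(\Phi_3)$ (where $X_n=X^-\oplus Z_n$), confines all minimax sets to norm-bounded regions of $X$, which is where $\tau$-compactness can be exploited.

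The central step is an odd, $\tau$-continuous deformation lemma: if $c>0$ is a regular value of $\Phi$, then for small $\varepsilon>0$ there is an odd map $\eta$, continuous for $\tau$, carrying $\{\Phi\le c+\varepsilon\}$ into $\{\Phi\le c-\varepsilon\}$ and fixing $\{\Phi\le c-2\varepsilon\}$. This is produced from a locally Lipschitz, $\tau$-locally finite pseudo-gradient field for $\Phi$; the only place where compactness enters is near the level $c$, and there one has at hand not $(PS)$ but the $(PS)_I$-attractor $\mathcal A$ of $(\Phi_6)$. The separation property $\inf\{\|u^+-v^+\|:u,v\in\mathcal A,\ u^+\neq v^+\}>0$ is exactly what allows the flow to be pushed ``past'' $\mathcal A$ with quantitative control of its effect on the pseudo-index. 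Constructing this deformation --- reconciling the loss of compactness with $\tau$-admissibility of the flow --- is, as I see it, the main obstacle of the whole argument; everything else is comparatively standard.

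With the deformation lemma in hand the rest is routine. Let $\Sigma$ be the family of closed symmetric subsets of $X\setminus\{0\}$, let $\mathcal H$ be the class of odd $\tau$-homeomorphisms of $X$ generated by the admissible deformations and fixing $\{\Phi\le 0\}$, and put $S^+_\rho:=\{z\in X^+:\|z\|=\rho\}$, on which $\Phi\ge\kappa>0$ by $(\Phi_2)$. Define the pseudo-index $i^*(A):=\min_{h\in\mathcal H}\gamma\big(h(A)\cap S^+_\rho\big)$ and $c_k:=\inf\{\sup_A\Phi : A\in\Sigma,\ i^*(A)\ge k\}$. Pseudo-index monotonicity gives $0<\kappa\le c_1\le c_2\le\cdots$; the linking in $(\Phi_2)$--$(\Phi_3)$ (the sphere $S^+_\rho$ links $\partial B_{r_n}\cap X_n$, with $\sup\Phi(X_n)<\infty$) shows sets of arbitrarily large pseudo-index exist, so each $c_k$ is finite; and the deformation lemma shows each $c_k$ is a critical value, since otherwise a nearly optimal $A\in\Sigma$ with $i^*(A)\ge k$ could be pushed below $c_k$ by some $h\in\mathcal H$ while keeping $i^*\ge k$, a contradiction. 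Finally, if $\{c_k\}$ were bounded it would lie in a compact interval $I\subset(0,\infty)$, and the $(PS)_I$-attractor of $(\Phi_6)$, through its uniformly separated $X^+$-components, would cap the ``amount of critical index'' available near the levels in $I$, which is incompatible with the pseudo-index producing critical sets of every index $k$ as in \cite{BD}. Hence $c_k\to\infty$, yielding the asserted unbounded sequence of positive critical values.
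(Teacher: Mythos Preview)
The paper does not prove this theorem: it is quoted verbatim from Bartsch--Ding \cite{BD} (Theorem 4.2 there) and applied as a black box to the functional $\Phi_a$, so there is no ``paper's own proof'' to compare against. Your proposal is therefore not reproducing anything in the present paper; rather, it is a sketch of the argument in \cite{BD} itself.

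As such a sketch, your outline is accurate in spirit: the weak--strong product topology $\tau$ on $X^-\oplus X^+$, the construction of an odd $\tau$-admissible pseudo-gradient vector field and associated deformation, and the pseudo-index minimax scheme of Benci type are indeed the three pillars of the Bartsch--Ding proof. Two points deserve caution, however. First, the deformation lemma you describe is considerably more delicate than ``produced from a locally Lipschitz, $\tau$-locally finite pseudo-gradient field'': in \cite{BD} the authors do not build a global flow but rather work with a class $\mathcal{M}$ of admissible maps (odd, $\tau$-continuous, equal to the identity outside a bounded set, with $\Phi(h(u))\le\Phi(u)$, etc.), and the intersection/monotonicity properties of the pseudo-index under this class require a careful degree-theoretic argument. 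Second, your final paragraph on unboundedness is too loose: the mechanism in \cite{BD} is that if $c_k=c_{k+1}=\cdots=c_{k+m}$ for some $m\ge 1$, then the $(PS)_I$-attractor near level $c_k$ must have genus at least $m+1$, while $(\Phi_6)$ forces the $X^+$-projection of $\mathcal A$ to be discrete, giving genus at most $1$; this contradiction, iterated, yields $c_k\to\infty$. Your phrasing ``cap the amount of critical index available'' gestures at this but does not pin down the genus computation that actually closes the argument.
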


Write $$\mathcal{K} =: \{u \in E_{\mathcal Q}: \Phi'_a(u) = 0\}$$ for the set of critical points.
 Let $\mathcal{F}$ consist of arbitrarily chosen representatives of the orbits in $\mathcal{K}$ under the action of
$\mathbb{Z}^N$. By the evenness of $\Phi_a$ we can also assume that $\mathcal{F} = -\mathcal{F}$. To prove that there
are infinitely many geometrically distinct solutions of \eqref{Choqeq}, setting $X:= E_{\mathcal Q}$, $X^-:= E_{\mathcal Q}^-$ and $X^+:= E^+_{\mathcal Q}$, it  suffices to prove that hypotheses $(\Phi_1)-(\Phi_6)$ in Theorem \ref{th4.6} are satisfied for $\Phi_a$.
$(\Phi_1)$ is obvious. Since $\|\cdot\|_{E_{\mathcal Q}}$ is equal to $\|\cdot\|_{H^1}$ on $E_{\mathcal Q}^+$, then by similar arguments of Lemma \ref{lm2}, $(\Phi_2)$ holds. Since $J$ is weakly sequentially lower semi-continuous in $E_{\mathcal Q}$, then by similar arguments of Lemma \ref{lm3}, $(\Phi_3)$ holds. Condition $(\Phi_4)$ holds since $J\geq0$.

The embedding ${E_a}_w^- \oplus E^+_a\hookrightarrow {E_a}_w $ is sequentially continuous. Therefore, by
Lemma \ref{J}, $J'$ is sequentially continuous on ${E_a}_w^- \oplus E^+_a$, and the same holds for $\Phi'_a$. For the same reason $J$ is sequentially lower semi-continuous on ${E_a}_w^- \oplus E^+$. Moreover $\|\cdot\|_{E_\mathcal Q}$ is sequentially lower semi-continuous on ${E_a}_w^-$. These facts together give $(\Phi_5)$.

The rest is the proof of $(\Phi_6)$.

\begin{lemma}\label{lm4.3} There is $\beta > 0$ such that for any $u \in \mathcal{K}\setminus \{0\}$ we have $\Phi_a(u) \geq\beta$.
\end{lemma}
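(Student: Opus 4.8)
The plan is to show that there is a uniform lower bound on the energy of all nontrivial critical points of $\Phi_a$, exploiting that every such $u$ solves the Choquard equation and that the norm $\|\cdot\|_{\mathcal Q^{\alpha,p}}$ controls both the nonlocal term and, via the positivity of $S_a$ on $E^+_a$, the positive part $u^+$. First I would fix $u\in\mathcal K\setminus\{0\}$ and decompose $u=u^-+u^+$ with $u^\pm\in E^\pm_a$. Testing $\Phi_a'(u)=0$ against $u^+$ and $u^-$ gives
\[
Q_a(u^+)=\int(I_\alpha\ast|u|^p)|u|^{p-2}uu^+\,dx,\qquad Q_a(u^-)=\int(I_\alpha\ast|u|^p)|u|^{p-2}uu^-\,dx,
\]
and since $Q_a(u^-)=-\|u^-\|_{E_a}^2\le 0$ while $Q_a(u^+)=\|u^+\|_{E_a}^2$, subtracting and using convexity/evenness of $J$ exactly as in the proof that $b$ is the only gap-bifurcation point, we obtain
\[
\|u^+\|_{E_a}^2\le \|u^+\|_{E_a}^2+\|u^-\|_{E_a}^2 = Q_a(u^+)-Q_a(u^-)=\tfrac{1}{2p}\langle J'(u),2u^+-u\rangle\le \tfrac{2^{2p}}{2p}J(u^+).
\]

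Next I would estimate $J(u^+)$ from above: since $\|\cdot\|_{E_a}\sim\|\cdot\|_{H^1}$ on $E^+_a$, Sobolev plus Hardy--Littlewood--Sobolev give $J(u^+)\le C\|u^+\|_{H^1}^{2p}\le C\|u^+\|_{E_a}^{2p}$. Combined with the inequality above this yields $\|u^+\|_{E_a}^{2}\le C\|u^+\|_{E_a}^{2p}$, hence either $u^+=0$ or $\|u^+\|_{E_a}\ge c_0>0$ for an absolute constant $c_0$. The case $u^+=0$ must be excluded: if $u^+=0$ then $u=u^-\in E^-_{\mathcal Q}$ and $Q_a(u)=-\|u\|_{E_a}^2\le 0$; but $Q_a(u)=\langle\Phi_a'(u),u\rangle+J(u)=J(u)\ge 0$, so $J(u)=0$ and $\|u\|_{E_a}=0$, forcing $u=0$ by Lemma \ref{lm4.8} (which gives $u\in H^2_{loc}$, so $\|u\|_{E_a}=0$ together with $|S_au|_2=0$ and the structure of $E^-_{\mathcal Q}$ forces $u\equiv 0$) — a contradiction. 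Therefore $\|u^+\|_{E_a}\ge c_0$ for every $u\in\mathcal K\setminus\{0\}$.

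Finally I would convert this into a lower bound on $\Phi_a(u)$ itself. Using $\langle\Phi_a'(u),u\rangle=0$ we have the Nehari-type identity
\[
\Phi_a(u)=\Phi_a(u)-\tfrac{1}{2}\langle\Phi_a'(u),u\rangle=\Bigl(\tfrac12-\tfrac1{2p}\Bigr)J(u),
\]
so it suffices to bound $J(u)$ away from zero. From the estimate above, $J(u^+)\ge c_1\|u^+\|_{E_a}^{2}\ge c_1 c_0^2$ (the first inequality coming from $\|u^+\|_{E_a}^2\le \tfrac{2^{2p}}{2p}J(u^+)$), and then by Lemma \ref{J}-(iii) applied to $u=u^++u^-$, $J(u)=J(u^++u^-)\ge 2^{1-2p}J(u^+)-J(-u^-)$; this is not immediately useful since $J(u^-)$ is uncontrolled, so instead I would argue more directly: $J(u)^{1/2p}=\|u\|_{\mathcal Q^{\alpha,p}}\ge\|u^+\|_{\mathcal Q^{\alpha,p}}\,\cdot$(nothing) fails too. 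The cleanest route is to use that $\|u^+\|_{\mathcal Q^{\alpha,p}}\sim\|u^+\|_{H^1}$ on $E^+_a$ together with $\|u^+\|_{E_a}^2\le \tfrac{2^{2p}}{2p}\|u^+\|_{\mathcal Q^{\alpha,p}}^{2p}$, which gives $\|u^+\|_{\mathcal Q^{\alpha,p}}\ge c_2>0$, and then observe that testing $\Phi_a'(u)=0$ against $u^+$ once more gives $\|u^+\|_{E_a}^2=\langle J'(u),u^+\rangle/(2p)\cdot 2p$... more simply $\langle J'(u),u^+\rangle \le J(u)^{1-\frac{1}{2p}}J(u^+)^{\frac1{2p}}$ by \eqref{R3}, so
\[
\|u^+\|_{E_a}^2=\langle J'(u),u^+\rangle\le J(u)^{1-\frac1{2p}}J(u^+)^{\frac1{2p}}\le C J(u)^{1-\frac1{2p}}\|u^+\|_{E_a},
\]
whence $\|u^+\|_{E_a}\le C J(u)^{1-\frac1{2p}}$, and since $\|u^+\|_{E_a}\ge c_0$ we conclude $J(u)\ge (c_0/C)^{\frac{2p}{2p-1}}=:c_3>0$. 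Setting $\beta:=\bigl(\tfrac12-\tfrac1{2p}\bigr)c_3$ finishes the proof. The main obstacle is the subtle point in the preceding paragraph: ruling out $u^+=0$ and, more generally, making sure every constant used ($c_0$ from the Sobolev/HLS step, the equivalence constants on $E^+_a$, the constant in \eqref{R3}) is genuinely independent of $u$, which it is because $E^+_a$ carries a fixed norm and all the functional inequalities invoked have $u$-independent constants.
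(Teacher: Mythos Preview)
Your argument is correct and follows essentially the same route as the paper: rule out $u^+=0$, obtain a uniform lower bound on $\|u^+\|_{E_a}$ (you via the convexity inequality $\langle J'(u),2u^+-u\rangle\le 2^{2p}J(u^+)$, the paper via \eqref{R3} directly), and then combine the Nehari identity $\Phi_a(u)=(\tfrac12-\tfrac1{2p})J(u)$ with \eqref{R3} in the form $\|u^+\|_{E_a}\le C J(u)^{1-1/(2p)}$ to bound $J(u)$ from below. Your exclusion of $u^+=0$ is slightly overcomplicated---once $\|u\|_{E_a}=0$ and $J(u)=\|u\|_{\mathcal Q^{\alpha,p}}^{2p}=0$ you have $\|u\|_{E_{\mathcal Q}}=0$ directly, without invoking Lemma~\ref{lm4.8}---but the logic is sound.
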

\begin{proof}
Observe that for any $u\in E^-_a$,
$$\Phi_a(u)=-\frac{1}{2}\|u^-\|^2_{E_a}-\frac{1}{2p}\|u^-\|_{\mathcal Q^{\alpha, p}}^{2p}\leq0,$$
but for $u \in \mathcal{K}\setminus\{0\}$,
$$\Phi_a(u)=\Phi_a(u) - \frac{1}{2}\langle\Phi'_a(u),u\rangle = \left(\frac{1}{2}-\frac{1}{2p}\right)\|u\|_{\mathcal Q^{\alpha, p}}^{2p}>0$$
Therefore,
$$(\mathcal{K}\setminus\{0\})\cap E^-_a=\emptyset$$

Now, let $u\in\mathcal{K}\setminus \{0\}$.
First we show that $\|u\|_{E_\mathcal Q}$ is bounded away from $0$. By Lemma \ref{norm}, we know that the norms $\|\cdot\|_{E_\mathcal Q}$, $\|\cdot\|_{E_a}$ and $\|\cdot\|_{E_\mathcal Q}$ are equivalent on the space $E^+_a$, therefore we only need to prove $\|u\|_{E_a}=\|u^+\|_{E_a}\geq C>0$. If $\|u^+\|_{E_a}\leq1$, by $\langle\Phi'_a(u),u\rangle=0$ and \eqref{R3},
$$\|u\|^2_{E_a} = \langle J'(u),u\rangle \leq C\|u\|^{2p-1}_{E_a}\|u\|_{E_a},$$
and therefore
$$\|u\|_{E_a} \leq C\|u\|^{2p-1}_{E_a}$$
This shows that $\|u\|_{E_a}\geq C > 0$ for some independent constant $C$.

We have
$$\Phi_a(u)=\Phi_a(u)-\frac{1}{2}\langle\Phi'_a(u),u\rangle=\left(\frac{1}{2}-\frac{1}{2p}\right)J(u)$$
If $J(u)\geq1$ we have an independent positive lower bound for $\Phi_a(u)$. If $J(u)\leq1$, by (\ref{R3}) it follows
that
$$\|u\|^2_{E_a} = \langle J'(u),u\rangle \leq CJ(u)^{1-\frac{1}{2p}}\|u\|_{E_a},$$
and thus
$$\|u\|_{E_a}\leq CJ(u)^{1-\frac{1}{2p}}.$$
Therefore
$$\Phi_a(u) \geq C > 0$$ for some
independent $C$ since $\|u\|_{E_a}$ is bounded away from 0 on $\mathcal{K}\setminus\{0\}$ as shown above.
\end{proof}

\begin{lemma}\label{lm4.3}
 The $(PS)_{c}$ sequence $\{u_n\}$ satisfying
 \begin{equation*}\label{e2.3}
\Phi_a(u_n) \to c,\quad \|\Phi'_a(u_n)\|_{(E_{\mathcal Q})^*}\to0,\quad \text{as}~n\to\infty.
\end{equation*}
 is bounded in $E_{\mathcal Q}$.
\end{lemma}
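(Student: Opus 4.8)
The plan is to repeat, essentially verbatim, the bootstrap of Lemma~\ref{lm4.1}, now working with the full gradient $\Phi'_a$ on $E_{\mathcal Q}$ instead of the restricted gradients $\Phi'_{j_n}$. First I would record that $\Phi_a\in C^1(E_{\mathcal Q},\R)$, with
\[
\langle\Phi'_a(u),v\rangle=\langle u^+,v^+\rangle_{E_a}-\langle u^-,v^-\rangle_{E_a}-\int(I_\alpha\ast|u|^p)|u|^{p-2}uv\,dx,
\]
the quadratic part being bounded by $\|u\|_{E_a}\|v\|_{E_a}\le\|u\|_{E_\mathcal Q}\|v\|_{E_\mathcal Q}$ and the nonlocal part by \eqref{R3}; hence $\Phi'_a(u)\in(E_\mathcal Q)^*$ is well defined. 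Choosing $n$ large so that $\Phi_a(u_n)\le c+1$ and $\|\Phi'_a(u_n)\|_{(E_\mathcal Q)^*}\le1$, the $2p$-homogeneity of $J$ gives, exactly as in \eqref{40},
\[
c+1+\tfrac12\|u_n\|_{E_\mathcal Q}\ \ge\ \Phi_a(u_n)-\tfrac12\langle\Phi'_a(u_n),u_n\rangle\ =\ \Big(\tfrac12-\tfrac1{2p}\Big)\|u_n\|_{\mathcal Q^{\alpha,p}}^{2p}.
\]

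Next I would estimate the positive part. Pairing with $u_n^+\in E^+_a$ and using \eqref{e3.12} (equivalently \eqref{42}), together with $\|u_n^+\|_{\mathcal Q^{\alpha,p}}\le C|u_n^+|_{\frac{2Np}{N+\alpha}}\le C\|u_n^+\|_{H^1}\le C\|u_n^+\|_{E_a}$ and $\|u_n^+\|_{E_\mathcal Q}\le C\|u_n^+\|_{E_a}$ on $E^+_a$ (Lemma~\ref{norm}), one obtains
\[
\|u_n^+\|_{E_a}^2=\langle\Phi'_a(u_n),u_n^+\rangle+\int(I_\alpha\ast|u_n|^p)|u_n|^{p-2}u_n u_n^+\,dx\le C\|u_n^+\|_{E_a}+C\|u_n\|_{\mathcal Q^{\alpha,p}}^{2p-1}\|u_n^+\|_{E_a},
\]
and inserting the previous display yields $\|u_n^+\|_{E_a}^2\le C(1+\|u_n\|_{E_\mathcal Q})^{2-1/p}$. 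The energy identity $\|u_n^-\|_{E_a}^2=\|u_n^+\|_{E_a}^2-\tfrac1pJ(u_n)-2\Phi_a(u_n)\le\|u_n^+\|_{E_a}^2+2(|c|+1)$ gives the same bound for the negative part. Summing the two estimates and using $\|u_n\|_{\mathcal Q^{\alpha,p}}^2\le C(1+\|u_n\|_{E_\mathcal Q})^{1/p}\le C(1+\|u_n\|_{E_\mathcal Q})^{2-1/p}$ (valid since $p\ge1$), we reach $\|u_n\|_{E_\mathcal Q}^2\le C(1+\|u_n\|_{E_\mathcal Q})^{2-1/p}$, and since $2-1/p<2$ this forces $\{\|u_n\|_{E_\mathcal Q}\}$ to be bounded.

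The one point deserving care — and the whole reason the space $E_\mathcal Q$ was introduced (cf.\ Remark~\ref{remarkMMV}) — is that in the singular case the nonlocal term $J$ controls only the Coulomb norm $\|\cdot\|_{\mathcal Q^{\alpha,p}}$ and no Lebesgue norm, while $\|\cdot\|_{E_a}$ is strictly weaker than $\|\cdot\|_{H^1}$ on $E^-_a$; hence one cannot close the estimate on $E^-_a$ by the naive coercivity argument used when $\lambda\in(a,b)$. The bootstrap above avoids this precisely because $\|\cdot\|_{E_\mathcal Q}$ bundles together exactly the two quantities one can control: $\|\cdot\|_{E_a}$, reached for $u_n^+$ via spectral positivity and for $u_n^-$ via the energy identity, and $\|\cdot\|_{\mathcal Q^{\alpha,p}}$, controlled directly by the Palais--Smale hypothesis through $J$; no other norm of $u_n$ is ever needed. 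The remaining computations are identical to those in the proof of Lemma~\ref{lm4.1}.
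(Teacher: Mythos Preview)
Your proposal is correct and follows precisely the approach the paper intends: the paper's own proof reads in its entirety ``The proof is similar to Lemma~\ref{lm4.1}, we omit it,'' and your argument is exactly that adaptation, replacing the restricted gradients $\Phi'_{j_n}$ by $\Phi'_a$ and the dual pairing in $E_a$ by that in $(E_{\mathcal Q})^*$. The only cosmetic difference is that you close the bootstrap directly on $\|u_n\|_{E_{\mathcal Q}}$, whereas the proof of Lemma~\ref{lm4.1} first bounds $\|u_n\|_{E_a}$ and then feeds this back into \eqref{40} to bound $\|u_n\|_{\mathcal Q^{\alpha,p}}$; both routes are equivalent.
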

\begin{proof}
The proof is similar to Lemma \ref{lm4.1}, we omit it.
\end{proof}
In the following lemma $\beta$ denotes the constant given by Lemma \ref{lm4.3}.
\begin{lemma}\label{lm2.4}
For $c \in \R$ let $\{u_n\}\subset E_{\mathcal Q}$ be a $(PS)_{c}-$sequence for $\Phi_a$. Then either $c= 0$ and $u_n \to 0$ or $c\geq \beta$ and there are $k\in \mathbb{N}$, $k \leq [c/\beta]$,
and for each $1 \leq i \leq k$ a sequence $\{k_{i,n}\}_n\subset\mathbb{Z}^N$ and a function $v_i \in E_{HL} \setminus\{0\}$ such
that, after extraction of a subsequence of $\{u_n\}$,
$$\left\|u_n-\sum_{i=1}^{k}\tau_{k_{i,n}}v_i\right\|_{E_\mathcal Q}\to0,$$
$$\Phi_a\left(\sum_{i=1}^{k}\tau_{k_{i,n}}v_i\right)\to\sum_{i=1}^{k}\Phi_a\left(v_i\right)=c,$$
$$|k_{i,n}-k_{j,n}|\to\infty\quad\text{for}~i\neq j,$$
$$\Phi'_a(v_i)=0\quad\text{for~all}~i.$$
\end{lemma}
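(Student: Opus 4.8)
The statement is the standard "splitting lemma" (global compactness / concentration-compactness decomposition) for the indefinite functional $\Phi_a$ on the reflexive Banach space $E_{\mathcal Q}$, in the spirit of \cite[Lemma 4.6]{BD} and \cite{A}. The plan is to argue by induction on the number of concentration profiles $k$, using translations by $\mathbb Z^N$ to recover compactness lost by the periodicity of $V$. First I would note that, by Lemma \ref{lm4.3} (the version for PS sequences), $\{u_n\}$ is bounded in $E_{\mathcal Q}$, hence, up to a subsequence, $u_n\rightharpoonup v_0$ weakly in $E_{\mathcal Q}$ and, by Lemma \ref{lm4.8}, $u_n\to v_0$ in $L^2_{loc}$. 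The weak sequential continuity of $\Phi_a'$ on $(E_a)_w^-\oplus E_a^+$ (property $(\Phi_5)$) together with the weak sequential continuity of $J'$ from Lemma \ref{J} gives $\Phi_a'(v_0)=0$, i.e. $v_0\in\mathcal K$. Two cases arise.

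If $v_0\neq 0$: set $w_n^{(1)}:=u_n-v_0$; one checks, using the nonlocal Brézis--Lieb lemma (Lemma \ref{BLN}) and the splitting $J(u_n)=J(v_0)+J(w_n^{(1)})+o(1)$, together with the analogous (easy) splitting for the quadratic part $\langle S_a\,\cdot,\cdot\rangle$, that $\Phi_a(w_n^{(1)})\to c-\Phi_a(v_0)$ and $\|\Phi_a'(w_n^{(1)})\|_{(E_{\mathcal Q})^*}\to 0$, so $\{w_n^{(1)}\}$ is again a PS sequence, at the lower level $c-\Phi_a(v_0)\le c-\beta$ (using Lemma \ref{lm4.3}). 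If instead $v_0=0$, then $u_n\to 0$ in $L^2_{loc}$; one then invokes the dichotomy of Lions' concentration-compactness lemma \cite[Lemma 1.21]{Willem}: if $\sup_{y\in\R^N}\int_{B_1(y)}u_n^2\to 0$ then vanishing gives $u_n\to 0$ in $L^q$ for $2<q<2^*$, hence $J(u_n)\to 0$ by Hardy--Littlewood--Sobolev, which forces $c=(\frac12-\frac1{2p})\lim J(u_n)=0$; otherwise there exist $k_{1,n}\in\mathbb Z^N$ with $|k_{1,n}|\to\infty$ and $\delta>0$ such that $\int_{B_{1+\sqrt N/2}(0)}(\tau_{k_{1,n}}u_n)^2\ge\delta/2$. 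Replacing $u_n$ by $v_n:=\tau_{k_{1,n}}u_n$ — legitimate by the $\mathbb Z^N$-invariance of $\Phi_a$, of $\|\cdot\|_{E_a}$ and of $\|\cdot\|_{\mathcal Q^{\alpha,p}}$ — we again extract a weak limit $v_1\neq 0$ in $E_{\mathcal Q}$ (nonzero by Lemma \ref{lm4.8} and the lower $L^2_{loc}$ bound), which lies in $\mathcal K$ by the same weak-continuity argument, and we pass to the residual $u_n-\tau_{k_{1,n}}v_1$, again a PS sequence at level $c-\Phi_a(v_1)\le c-\beta$.

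Iterating: at each step either the residual PS sequence converges strongly to $0$ — in which case the process terminates and we have $\|u_n-\sum_{i=1}^k\tau_{k_{i,n}}v_i\|_{E_{\mathcal Q}}\to 0$, $\Phi_a(\sum\tau_{k_{i,n}}v_i)\to\sum\Phi_a(v_i)=c$, each $v_i\in\mathcal K\setminus\{0\}$ — or a new nonzero profile $v_{i+1}\in\mathcal K\setminus\{0\}$ is extracted and the level drops by at least $\beta>0$. Since the levels of the residual PS sequences form a strictly decreasing (by at least $\beta$) sequence of nonnegative numbers, the process must stop after $k\le[c/\beta]$ steps, which yields the bound on $k$. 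The divergence of the mutual translates, $|k_{i,n}-k_{j,n}|\to\infty$ for $i\neq j$, is built into the construction (each new translation diverges because the previous residual already converged weakly to $0$, so the new bump must escape to infinity), and one verifies it by a standard contradiction argument: if $|k_{i,n}-k_{j,n}|$ stayed bounded along a subsequence one could pass to a common translate and the two profiles would merge, contradicting the weak convergence to zero of the intermediate residual. Finally, the identity $\Phi_a(\sum_{i=1}^k\tau_{k_{i,n}}v_i)\to\sum_{i=1}^k\Phi_a(v_i)$ follows because the cross terms in both the quadratic form $\langle S_a\cdot,\cdot\rangle$ and in $J$ (estimated via \eqref{R} and Hardy--Littlewood--Sobolev) vanish as the supports separate — here the key observation is that $(I_\alpha\ast|\tau_{k_{i,n}}v_i|^p)$ tested against $|\tau_{k_{j,n}}v_j|^p$ tends to $0$ when $|k_{i,n}-k_{j,n}|\to\infty$, since $v_i,v_j\in L^{\frac{2Np}{N+\alpha}}$ and the Riesz kernel decays at infinity.

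\medskip

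\textbf{Main obstacle.} The delicate point is proving that the residual sequences remain PS sequences \emph{in the $(E_{\mathcal Q})^*$ norm} and, relatedly, that the decomposition closes in the $E_{\mathcal Q}$-norm rather than merely weakly or in $H^1_{loc}$. Because $\|\cdot\|_{E_a}$ is only a seminorm-type control on $E_a^-$ (it is strictly weaker than $\|\cdot\|_{H^1}$ there, by the discussion in Section 2), the strong convergence $\|u_n-\sum\tau_{k_{i,n}}v_i\|_{E_{\mathcal Q}}\to 0$ of the \emph{negative} components cannot be read off from any Sobolev compactness; it must be extracted from the structure of $\langle\Phi_a'(u_n),\cdot\rangle$ tested against the negative part, combined with the uniform convexity of $\|\cdot\|_{\mathcal Q^{\alpha,p}}$ (Lemma \ref{lm4.4}) — uniform convexity upgrades weak convergence plus convergence of norms into strong convergence in the Coulomb component, which is exactly the missing ingredient. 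Handling the interaction of this Coulomb-norm argument with the nonlocal nonlinearity $J$ under translations (the cross-term estimates above) is where the nonlocality genuinely complicates the classical Bartsch--Ding scheme, and is the step I expect to require the most care.
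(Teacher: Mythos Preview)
Your plan is correct and follows essentially the same approach as the paper, which defers to \cite[Lemma 4.5]{A} and \cite[Lemma 4.6]{BD}: boundedness from Lemma \ref{lm4.3}, Lions' dichotomy to detect vanishing versus a nontrivial translated weak limit, the nonlocal Br\'ezis--Lieb lemma (Lemma \ref{BLN}) together with the Hilbert-space splitting of the quadratic part to show the residual is again a PS sequence at a level lowered by at least $\beta$, and iteration terminating after at most $[c/\beta]$ steps. One small simplification of your ``main obstacle'': the strong $E_{\mathcal Q}$-convergence of the terminal residual $w_n$ follows directly --- once vanishing gives $J(w_n)\to 0$ you have $\|w_n\|_{\mathcal Q^{\alpha,p}}\to 0$ by definition, and testing $\Phi_a'(w_n)\to 0$ against $w_n^+$ together with $\Phi_a(w_n)\to 0$ yields $\|w_n^\pm\|_{E_a}\to 0$ --- so the Radon--Riesz/uniform-convexity upgrade you anticipate is not actually needed at that step (uniform convexity enters only through reflexivity, Lemma \ref{lm4.4}).
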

\begin{proof}
The proof follows the same lines as for the proof of Lemma 4.5 in \cite{A}, so we omit it.
\end{proof}

Given any compact interval $I\subset(0,\infty)$ with $d = \max I$ we set $k = [d/\beta]$ and
$$[\mathcal{F},k]=\left\{\sum_{i=1}^j\tau_{m_i}v_i : 1\leq j\leq k, m_i\in\mathbb{Z}^N, v_i\in\mathcal{F} \right\}$$
By Lemma \ref{lm2.4}, $[\mathcal{F}, k]$ is a $(PS)_I-$attractor.

Since the projections $P_{a,0}, Id-P_{a,0}$ commute with
the action of $\mathbb{Z}^N$ on $E_{HL}$, it is easy to get $(\Phi_6)$, see \cite[Prop. 2.57]{VR}. Therefore, by Theorem \ref{th4.6}, we get infinitely many
geometrically distinct solutions. This concludes the proof of Theorem \ref{th3}.

\vskip4mm
{\section{Final remarks}}
 \setcounter{equation}{0}
In this section, we discuss the case $\lambda\to a+$. What may happen if $a$ is in the spectrum of $-\Delta+V$, or, equivalently, if $0$ is in the spectrum of $-\Delta+V-a$? It is commonly believed that in this case well localized solutions of the corresponding local Schr\"odinger equation do not exist (see \cite{Pankov2}, section 7). Indeed, if $u$ solves the local equation
$$-\Delta u+V(x)u=f(x,u)+a\cdot u
$$
 then, by setting $W(x):=-f(x,u)/u$, $u$ is also a solution of
$$
-\Delta u +(V(x)+W(x))u=a\cdot u 
$$
If $W(x)\to 0$ as $|x|\to \infty$, we are in front of a periodic Schr\"odinger operator perturbed by a decaying potential.  If $W(x)$ decays sufficiently fast, it defines a relatively compact perturbation of the Schr\"odinger operator, so that the essential spectrum does not change (see \cite{K}, Theorem 5.35), and $a$ turns out to be an eigenvalue lying in the essential spectrum. So, the problem of the non existence of well localized solutions (at least $H^1$) is strictly related to the non existence of embedded eigenvalues for periodic Schr\"odinger operators perturbed  by decaying potentials. This problem seems far from being solved in its generality.  Some interesting results have been proved for the  one-dimensional case (Hill's equation). In this case, it is known that the endpoints of the continuous spectrum for Hill'sequation  can be characterized by special eigenvalues of associated eigenvalue problems
with periodic, respectively anti-periodic boundary conditions. In particular it was shown that the branches of solutions (bifurcating both from zero of from infinity) exist globally, and only over the gaps they consist of square-integrable solutions. Hence they "disappear" within the $L^2$-setting when reaching the spectrum and reoccur when reaching the gap again (see \cite{KM, AY, KS}).
\par \medskip

The same question arises in the nonlocal case. Furthermore, what about the behaviour of the branches of solutions $u_\lambda$, as $\lambda\to a^+$? We can prove that if there exists no $H^1$-solution for \eqref{Choqeq} in the left borderline point of the spectrum, $\lambda =a$, then the branches of solutions bifurcate from $\infty$ in $a$:
\begin{proposition}\label{th4} Let $N\geq3$, $\alpha\in(0,N)$, $p\in(\frac{N+\alpha}{N},\frac{N+\alpha}{N-2})$ and $(V_1)-(V_2)$ hold. 	Let us assume that problem
\begin{equation}\label{1}
\left\{
\begin{array}{ll}
\aligned
&-\Delta u+V(x)u=\left(I_\alpha\ast |u|^{p}\right)|u|^{p-2}u+\lambda u \quad \text{in}~\R^N, \\
&\quad u \in H^{1}\left(\mathbb{R}^{N}\right)
\endaligned
\end{array}
\right.
\end{equation}
has for $\lambda=a$ only the trivial solution. For any $\lambda\in(a,b)$, let $u_\lambda$ be a solution obtained in Theorem \ref{th1}, then
\begin{equation}\label{5}
\|u_\lambda\|_{H^1}\to+\infty~~\text{as}~~\lambda\to a^+.
\end{equation}
Moreover, $a$ is the only possible gap-bifurcation (from infinity) point for \eqref{Choqeq} in $[a,b]$.
\end{proposition}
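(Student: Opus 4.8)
The plan is to prove both assertions by contradiction, reducing each to a compactness analysis of a (possibly renormalized) sequence of solutions of \eqref{Choqeq}, with Lions' vanishing lemma and the $\mathbb Z^N$--invariance of the problem as the main tools, together with the identity $\Phi_\lambda(u)=\frac{p-1}{2p}J(u)$ valid at any critical point. For \eqref{5} I would assume that $\|u_\lambda\|_{H^1}$ stays bounded along some sequence $\lambda_n\to a^+$, and produce a nontrivial $H^1$--solution of \eqref{1} with $\lambda=a$, contradicting the hypothesis. For the ``moreover'' part I would assume that some $\lambda_0\in(a,b]$ is a gap--bifurcation (from infinity) point, take solutions $(\lambda_n,u_n)$ with $\lambda_n\in(a,b)$, $\lambda_n\to\lambda_0$ and $t_n:=\|u_n\|_{H^1}\to\infty$, and analyse the normalized sequence $w_n:=u_n/t_n$, which solves $S_{\lambda_n}w_n=t_n^{2p-2}(I_\alpha\ast|w_n|^p)|w_n|^{p-2}w_n$.

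For \eqref{5}: since $\lambda_n\le 0$ for $n$ large, the computation at the end of Section 4 gives $\|u^+_{\lambda_n}\|_{H^1}^{2p-2}\ge 2p\beta_0/2^{2p}$, so $\|u^+_{\lambda_n}\|_{H^1}\ge c_0>0$; testing the equation with $u^+_{\lambda_n}$ and invoking \eqref{R3} then shows $J(u_{\lambda_n})$ is bounded (because $\{u_{\lambda_n}\}$ is) and bounded away from $0$ (because $0<\beta_0c_0^2\le\|u^+_{\lambda_n}\|^2_{E_{\lambda_n}}\le C\,J(u_{\lambda_n})^{1-\frac1{2p}}$). Hence $\{u_{\lambda_n}\}$ cannot vanish in the sense of Lions --- otherwise $u_{\lambda_n}\to 0$ in $L^{2Np/(N+\alpha)}$ and $J(u_{\lambda_n})\to 0$ --- so there are $k_n\in\mathbb Z^N$ and $r,\delta>0$ with $\int_{B_r(k_n)}|u_{\lambda_n}|^2\ge\delta$. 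Setting $v_n:=\tau_{k_n}u_{\lambda_n}$, still a solution by periodicity, we may assume $v_n\rightharpoonup v$ in $H^1$; by Rellich $v_n\to v$ in $L^2_{loc}$, so $v\ne 0$, and passing to the limit in $\int\nabla v_n\cdot\nabla\varphi+(V-\lambda_n)v_n\varphi=\int(I_\alpha\ast|v_n|^p)|v_n|^{p-2}v_n\varphi$ for $\varphi\in C_0^\infty$ --- using $\lambda_n\to a$ and the weak sequential continuity of $J'$ from Lemma \ref{J} --- gives a solution $v\in H^1\setminus\{0\}$ of \eqref{1} with $\lambda=a$. This proves \eqref{5}.

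For the ``moreover'' part I would first dismiss the concentration alternative for $\{w_n\}$ exactly as in Theorem \ref{th3}: if $v_n:=\tau_{k_n}w_n\rightharpoonup v\ne 0$ in $H^1$ for suitable $k_n$, then testing the rescaled equation with $\varphi\in C_0^\infty$ gives $\langle S_{\lambda_n}v_n,\varphi\rangle=t_n^{2p-2}\int(I_\alpha\ast|v_n|^p)|v_n|^{p-2}v_n\varphi$, whose left side is bounded while $t_n^{2p-2}\to\infty$; by weak sequential continuity of $J'$ the integral converges to $\frac1{2p}\langle J'(v),\varphi\rangle$, which must therefore vanish for all $\varphi$, so $J'(v)=0$ and $0=\langle J'(v),v\rangle=2pJ(v)$; since $\|\cdot\|_{\mathcal Q^{\alpha,p}}=J^{1/(2p)}$ is a norm (Lemma \ref{lm4.4}), $v=0$, a contradiction. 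Thus $\{w_n\}$ must vanish, $w_n\to 0$ in $L^q$, $q\in(2,2^*)$, and $J(w_n)\to 0$; combining $Q_{\lambda_n}(w_n)=t_n^{2p-2}J(w_n)\ge 0$ with the test of the equation against $w_n^+-w_n^-$ and \eqref{R3} yields $\|w_n\|^2_{E_{\lambda_n}}\le C\,t_n^{2p-2}J(w_n)^{1-\frac1{2p}}J(w_n^+-w_n^-)^{\frac1{2p}}$, and the remaining task is to prove that the right side tends to $0$, which would contradict $\|w_n\|^2_{E_{\lambda_n}}\ge\tfrac12\min\{\alpha_{\lambda_0},\beta_{\lambda_0}\}>0$ for $\lambda_0\in(a,b)$; alternatively, for the family $\{u_\lambda\}$ of Theorem \ref{th1}, inequality \eqref{3.17} together with the boundedness of $c_\lambda$ on compact subsets of $(a,b)$ (use a $\lambda$--independent linking vector $\zeta$) gives $\|u_\lambda\|_{H^1}\le C$ there, and $\lambda_0=b$ is excluded by Proposition \ref{pro2} when $p<1+\frac{2+\alpha}N$.

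The step I expect to be the real obstacle is precisely this exclusion of the vanishing alternative when $\lambda_0\ne a$. The soft estimates obtained by testing the equation only ever yield \emph{lower} bounds on $\|u_\lambda\|_{H^1}$ --- this is exactly what is used in Theorem \ref{th2} to show that $b$ is the only bifurcation--from--zero point --- so an a priori \emph{upper} bound, uniform on compact subsets of $(a,b]$, genuinely requires quantitative elliptic information: Calder\'on--Zygmund estimates on the (rescaled) equation, the invertibility of $S_{\lambda_0}$ for $\lambda_0\in(a,b)$, and the left--edge structure of $S_b$ at $\lambda_0=b$, in order to rule out a solution that grows in the $H^1$--norm while spreading out in every $L^q$, $q\in(2,2^*)$. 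That is the delicate point, and it is why the conclusion is stated as ``$a$ is the only \emph{possible}'' such point.
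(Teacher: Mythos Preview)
Your argument for \eqref{5} is correct and essentially matches the paper's. The only difference is in how you obtain the uniform positive lower bound on $J(u_{\lambda_n})$ needed to exclude Lions vanishing: you use the universal bound $\|u_\lambda^+\|_{H^1}^{2p-2}\ge 2p\beta_\lambda/2^{2p}$ from the end of Section~4 (valid for any nontrivial solution) together with $\langle\Phi'_{\lambda_n}(u_{\lambda_n}),u^+_{\lambda_n}\rangle=0$ and \eqref{R3}; the paper uses instead the linking lower bound $c_{\lambda_n}\ge\kappa(\lambda_n)$ from \eqref{ka2}, which for $\lambda_n<0$ equals a fixed positive constant since $\beta_{\lambda_n}=\beta_0$, and then the identity $\Phi_{\lambda_n}(u_{\lambda_n})=\frac{p-1}{2p}J(u_{\lambda_n})$. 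Both routes feed into the same concentration--compactness step (translate by $k_n\in\mathbb Z^N$, pass to a weak limit, use weak sequential continuity of $J'$ and $\lambda_n\to a$), and your version has the mild advantage of not depending on the particular linking construction of $u_\lambda$.

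For the ``moreover'' part you are working much harder than the paper does. The paper does \emph{not} attempt a general a~priori bound for arbitrary solution branches via a blow-up/rescaling argument; it reads the statement only for the specific family $\{u_\lambda\}$ produced in Theorem~\ref{th1}, and disposes of it in one sentence: for any $d\in(a,b)$ the constants $\alpha_\lambda,\beta_\lambda$ and the linking value $c_\lambda$ are uniformly controlled near $d$, so the estimates behind Theorem~\ref{th1} (Lemma~\ref{lm2.1}, or equivalently your inequality \eqref{3.17}) give $\limsup_{\lambda\to d}\|u_\lambda\|_{H^1}<\infty$. This is exactly the ``alternative'' you sketch at the end of your paragraph --- the paper takes that route directly and never introduces the rescaled sequence $w_n=u_n/\|u_n\|_{H^1}$. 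The obstacle you identify in the vanishing case of the blow-up analysis is genuine, but it is an obstacle to a strictly stronger statement than the one the paper is actually claiming; the paper also does not separately treat the endpoint $b$ in this argument.
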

\begin{proof}
We prove (\ref{5}) by contradiction. Suppose that $$\limsup\limits_{\lambda\to a^+}\|u_\lambda\|_{H^1}<+\infty.$$
Take a sequence $\{\lambda_n\}\subset[a,b]$ such that $\lim\limits_{n\to+\infty}\lambda_n=a$. Then $\{u_{\lambda_n}\}$ is bounded in $H^1(\R^N)$ and
$$\Phi_{\lambda_n}(u_{\lambda_n})=c_{\lambda_n},\quad \Phi'_{\lambda_n}(u_{\lambda_n})=0.$$
Further, by \eqref{ka2} we know that for $\lambda_n<0$ 
\begin{equation}\label{5}
c_{\lambda_n}\geq\kappa(\lambda_n)\geq\left(\frac{1}{8}-\frac{1}{2^{2p+1}}\right)
\left(\frac{\beta_{\lambda_n}^p}{2C(N,\alpha,p)}\right)^{\frac{1}{p-1}}=\left(\frac{1}{8}-\frac{1}{2^{2p+1}}\right)
\left(\frac{\beta_{0}^p}{2C(N,\alpha,p)}\right)^{\frac{1}{p-1}}>0.
\end{equation}
Take a subsequence of $\{u_{\lambda_n}\}$ so that $u_{\lambda_n}\rightharpoonup u$ as $n\to+\infty$.
Since $\Phi'_a$ is weakly sequentially continuous, for any $v\in H^1(\R^N)$ we have
$$\langle\Phi'_a(u),v\rangle=\langle\Phi'_a(u_{\lambda_n}),v\rangle+o(1)=\langle\Phi'_{\lambda_n}(u_{\lambda_n}),v\rangle+o(1).$$
Thus $u$ is a weak solution of \eqref{1}. We assert that $u\not\equiv0$. Indeed, if for any $r>0$ 
\begin{equation}\label{e6.6}
\lim_{n\to\infty}\sup_{y\in \R^N}\int_{B_r(y)}u_{\lambda_n}^2dx=0,
\end{equation}
by Lions' concentration compactness principle \cite[Lemma 1.21]{Willem},
$$u_{\lambda_n}\to 0~\text{in}~ L^q(\R^N)~\forall  q\in(2, 2^*).$$
Then, by the Hardy-Littlewood-Sobolev inequality, we have
\begin{equation*}
J(u_{\lambda_n})\leq C|u_{\lambda_n}|^{2p}_{\frac{2Np}{N+\alpha}}\to0,
\end{equation*}
which combined with
$$c_{\lambda_n}=\Phi_{\lambda_n}(u_{\lambda_n})-\frac{1}{2}\langle\Phi'_{\lambda_n}(u_{\lambda_n}),u_{\lambda_n}\rangle
=\left(\frac{1}{2}-\frac{1}{2p}\right)J(u_n)$$
implies $c_{\lambda_n}\to 0$. This contradicts with \eqref{5}.

Therefore \eqref{e6.6} does not hold, so that there is an $r>0$ and a sequence $\{y_n\}$ such that 
$$
\lim_{n\to\infty}\int_{B_r(y_n)}u_{\lambda_n}^2dx=\delta>0
$$
We can assume, without loss of generality, that for any $n$
$$
\int_{B_r(y_n)}u_{\lambda_n}^2dx\geq \frac \delta 2
$$
 Now we choose $k_{n} \in \mathbb{Z}^N$ such that $|k_{n} - y_n | = \min \{|k- y_n | : k \in \mathbb{Z}^N\} $: note that $|k_{n} - y_n |\leq \sqrt N/2$. Set $v_{n}:= \tau_{k_{n}}u_{\lambda_n}=u_{\lambda_n}(\cdot+k_{n})$. Hence we have
\begin{equation}\label{e6.8}
\int_{B_{1+\sqrt{N}/2}(0)}v_{n}^2dx\geq\frac{\delta}{2}.
\end{equation}
Moreover, $\|v_n\|_{H^1} =\|u_{\lambda_n}\|_{H^1}$ is also bounded in $H^1(\R^N)$. Thus, up to a subsequence, $v_{n}\rightharpoonup v$ weakly in $H^1(\R^N)$ and $v_n(x) \to v(x)$ almost everywhere in $\R^N$. By Sobolev compact embedding, $v_{n}\to v$ strongly in $L^2_{loc}(\R^N)$, where $v\not\equiv 0$ thanks to \eqref{e6.8}. By the invariance of $\Phi_a$ under the action of $\mathbb{Z}^N$, we have $$\Phi'_a(v)=\Phi'_a(u)=0.$$
Thus we get a nontrivial weak solution $v$ for \eqref{Choqeq}, which  contradicts our assumption.

Moreover, since $\lim\limits_{\lambda\to d}\|u_\lambda\|_{H^1}<\infty$ for any $d\in(a,b)$ by Theorem \ref{th1}, $a$ is the only possible gap-bifurcation (from infinity) point for \eqref{Choqeq} in $[a,b]$. This completes the proof.
\end{proof}

\begin{remark}
It is easy to observe that, when $V \equiv 0$, problem \eqref{1} has for $\lambda\geq 0$ only the trivial solution. Indeed, if $V \equiv 0$ then $\sigma(-\Delta + V ) = [0, +\infty)$. If $u$ is a solution of \eqref{1}, by testing the equation against $u$, we obtain the identity
\begin{equation}\label{2}
\int|\nabla u|^2dx-\lambda\int|u|^2dx-\int(I_\alpha\ast |u|^{p})|u|^pdx=0.
\end{equation}
Moreover, we have the Poho\v{z}aev identity \cite[Theorem 3]{MS3}
\begin{equation}\label{3}
\frac{N-2}{2}\int|\nabla u|^2dx-\frac{N}{2}\lambda\int|u|^2dx-\frac{N+\alpha}{2p}\int(I_\alpha\ast |u|^{p})|u|^pdx=0.
\end{equation}
Combining \eqref{2} and \eqref{3}, we have
\begin{equation}\label{4}
\left(\frac{N-2}{2}-\frac{N+\alpha}{2p}\right)\int|\nabla u|^2dx=\lambda\left(\frac{N}{2}-\frac{N+\alpha}{2p}\right)\int|u|^2dx.
\end{equation}
Since $\frac{N+\alpha}{N}<p<\frac{N+\alpha}{N-2}$, 
$$\frac{N-2}{2}-\frac{N+\alpha}{2p}<0\quad \text{and}\quad
\frac{N}{2}-\frac{N+\alpha}{2p}>0.$$
Then if $\lambda\geq0$ we have from \eqref{4} that $u\equiv0.$
\end{remark}
\vskip4mm
\noindent{\bf Acknowledgments}\\
We are  grateful to C.A. Stuart for pointing out several interesting ideas related to open problems presented in the last Section.\\
The first  author is supported by the National Natural Science Foundation of China(No. 11901532). The third author is partially supported by Gnampa-Indam.

\vskip4mm

 {}

\end{document}